\documentclass[a4paper, reqno]{amsart}

\usepackage[UKenglish]{babel}
\usepackage{amsmath, amssymb, amsthm}
\usepackage{scrextend}
\usepackage[hidelinks]{hyperref}
\usepackage{xpatch}
\usepackage{color}
\usepackage{tikz-cd}
\usepackage{enumitem}
\usepackage{theoremref}

\usepackage[inline,final]{showlabels}
\showlabels{thlabel}



\setlist[description]{font=\normalfont\scshape}

\xpatchcmd{\proof}{\itshape}{\normalfont\bfseries}{}{}
\newtheoremstyle{repeat}{}{}{\itshape}{}{\bfseries}{.}{.5em}{#3, repeated}

\newtheorem{theorem}{Theorem}[section]
\newtheorem{proposition}[theorem]{Proposition}
\newtheorem{lemma}[theorem]{Lemma}
\newtheorem{corollary}[theorem]{Corollary}
\newtheorem{fact}[theorem]{Fact}

\theoremstyle{definition}
\newtheorem{definition}[theorem]{Definition}
\newtheorem{remark}[theorem]{Remark}
\newtheorem{convention}[theorem]{Convention}
\newtheorem{example}[theorem]{Example}

\theoremstyle{repeat}
\newtheorem*{repeated-theorem}{Repeat}



\newcommand{\B}{\mathcal{B}}
\newcommand{\C}{\mathcal{C}}
\newcommand{\F}{\mathcal{F}}
\newcommand{\K}{\mathcal{K}}
\newcommand{\M}{\mathcal{M}}

\newcommand{\Mod}{\mathbf{Mod}}
\newcommand{\SubMod}{\mathbf{SubMod}}
\newcommand{\MetMod}{\mathbf{MetMod}}
\newcommand{\SubMetMod}{\mathbf{SubMetMod}}
\newcommand{\SubSet}{\mathbf{SubSet}}

\DeclareMathOperator{\colim}{colim}
\DeclareMathOperator{\tp}{tp}
\DeclareMathOperator{\gtp}{gtp}
\DeclareMathOperator{\Lgtp}{Lgtp}
\DeclareMathOperator{\dom}{dom}
\DeclareMathOperator{\Sub}{Sub}
\DeclareMathOperator{\base}{base}
\DeclareMathOperator{\Hom}{Hom}
\DeclareMathOperator{\Sgtp}{S_{gtp}}

\newcommand{\ld}{\textup{ld}}
\newcommand{\isid}{\textup{isi-d}}
\newcommand{\isif}{\textup{isi-f}}
\newcommand{\lK}{\textup{lK}}

\renewcommand{\phi}{\varphi}

\def\Ind#1#2{#1\setbox0=\hbox{$#1x$}\kern\wd0\hbox to 0pt{\hss$#1\mid$\hss}
\lower.9\ht0\hbox to 0pt{\hss$#1\smile$\hss}\kern\wd0}
\def\ind{\mathop{\mathpalette\Ind{}}}
\def\Notind#1#2{#1\setbox0=\hbox{$#1x$}\kern\wd0\hbox to 0pt{\mathchardef
\nn="3236\hss$#1\nn$\kern1.4\wd0\hss}\hbox to 0pt{\hss$#1\mid$\hss}\lower.9\ht0
\hbox to 0pt{\hss$#1\smile$\hss}\kern\wd0}

\title{NSOP$_1$-like independence in AECats}
\author{Mark Kamsma}
\email[Mark Kamsma]{mark@markkamsma.nl}
\urladdr{https://markkamsma.nl}
\date{\today. \emph{MSC2020}: Primary: 03C45; secondary: 03C48, 03C52, 03C66, 18C35}
\keywords{dividing; kim-dividing; accessible category; NSOP1 theory; simple theory; abstract elementary class; independence relation; abstract elementary category}
 
\begin{document}

\begin{abstract}
The classes stable, simple and NSOP$_1$ in the stability hierarchy for first-order theories can be characterised by the existence of a certain independence relation. For each of them there is a canonicity theorem: there can be at most one nice independence relation. Independence in stable and simple first-order theories must come from forking and dividing (which then coincide), and for NSOP$_1$ theories it must come from Kim-dividing.

We generalise this work to the framework of AECats (Abstract Elementary Categories) with the amalgamation property. These are a certain kind of accessible category generalising the category of (subsets of) models of some theory. We prove canonicity theorems for stable, simple and NSOP$_1$-like independence relations. The stable and simple cases have been done before in slightly different setups, but we provide them here as well so that we can recover part of the original stability hierarchy. We also provide abstract definitions for each of these independence relations as what we call isi-dividing, isi-forking and long Kim-dividing.
\end{abstract}

\maketitle

\tableofcontents

\section{Introduction}
\label{sec:introduction}
Independence relations are a central notion in model theory. Work on independence in first-order theories was started by Shelah \cite{shelah_classification_1990} through the notions of forking and dividing. This was later generalised to simple theories in work by Kim and Pillay \cite{kim_forking_1998, kim_simple_1997}. In NSOP$_1$ theories dividing is no longer so well-behaved in general. Inspired by ideas from Kim \cite{kim_ntp1_2009}, Kaplan and Ramsey developed the notion of Kim-dividing \cite{kaplan_kim-independence_2020}, which does yield a nice independence relation in NSOP$_1$ theories. Each of these classes admits a so-called \emph{Kim-Pillay style theorem}, after a result by Kim and Pillay \cite[Theorem 4.2]{kim_simple_1997}. Roughly the statement is as follows:
\begin{quote}
A theory is simple if and only if it admits an independence relation satisfying a certain list of properties. Furthermore, in this case that relation is given by forking independence.
\end{quote}
In particular such a theorem gives us \emph{canonicity}: there can be at most one nice enough independence relation, which must be forking independence.

All of the above takes place in the classical framework of first-order logic. However, there are many interesting classes of structures that do not fit in this framework. Similar work has been done in different and more general logical frameworks. For example, the stable and simple settings have been studied in positive logic \cite{shelah_lazy_1975,pillay_forking_2000, ben-yaacov_simplicity_2003}, continuous logic \cite{ben-yaacov_model_2008} and AECs \cite{shelah_classification_2009, hyttinen_independence_2006, boney_canonical_2016, vasey_building_2016, grossberg_simple-like_2021}. More recently the NSOP$_1$ setting has been studied in positive logic \cite{haykazyan_existentially_2021, dobrowolski_kim-independence_2022}. It also makes an appearance in continuous logic in \cite{berenstein_hilbert_2018}, where a non-simple NSOP$_1$ continuous theory is studied. Even then there is a more general category-theoretic approach, unifying all the previously mentioned frameworks. Lieberman, Rosick\'y and Vasey proved a category-theoretic canonicity theorem for stable independence relations \cite{lieberman_forking_2019}. In \cite{kamsma_kim-pillay_2020} a category-theoretic canonicity theorem for simple independence relations was proved. In this paper we continue this work and prove a canonicity theorem for NSOP$_1$-like independence relations.

We work in the same framework as in \cite{kamsma_kim-pillay_2020}, namely the framework of AECats (Abstract Elementary Categories) with the amalgamation property. This generalises both the category of models of some first-order theory $T$ and the category of subsets of models of $T$. The framework can also be applied to positive logic, continuous logic and AECs (\thref{ex:aecats}).

An independence relation will be defined as in \cite{kamsma_kim-pillay_2020} and will be a relation on triples of subobjects, where we use the notation $A \ind_C^M B$ to say that subobjects $A, B, C$ of $M$ are independent. However, it will be useful to restrict the objects that can appear in the base of the independence relation (i.e.\ the $C$ in $A \ind_C^M B$). For example, we might only want to consider independence over models while allowing arbitrary sets on the sides. We add this flexibility in this paper through the notion of a \emph{base class}, which will be the class of objects that is allowed in the base.

An independence relation will be called stable, simple or NSOP$_1$-like based on the properties that it satisfies (\thref{def:independence-relation-types}). These are the appropriate category-theoretic versions of the properties that we know independence to have in the corresponding classes in the classical first-order setting. In particular this means that any stable independence relation is simple, and any simple independence relation is NSOP$_1$-like, reflecting (that part of) the original stability hierarchy.

In \cite{kamsma_kim-pillay_2020} the notion of isi-dividing was introduced, and the main result stated that any simple independence relation comes from isi-dividing (i.e.\ any simple independence relation is non-isi-dividing). As discussed before, in first-order NSOP$_1$ theories the notion of dividing is no longer so well-behaved, and we should study Kim-dividing instead. So if we think of isi-dividing as the analogue of dividing in AECats, then we will need the right analogue of Kim-dividing to study NSOP$_1$-like independence relations in AECats. To this end we define long Kim-dividing (\thref{def:long-kim-dividing}). In this definition we need a forking notion based on isi-dividing, which we will then call isi-forking (\thref{def:isi-forking}).

Before we move on to the main results we make a quick comment about some terminology. Classically being stable or simple is defined as not having the order property (NOP) or not having the tree property (NTP) respectively. In line with this, NSOP$_1$ means that we do not have SOP$_1$, but there is no separate name for NSOP$_1$. We do not consider the combinatorial properties OP, TP and SOP$_1$ in this paper. It is not even directly clear what these should look like in settings without compactness. A link between stable independence relations and a form of the order property is established in \cite{lieberman_forking_2019} in a category-theoretic setting. There is also \cite{grossberg_simple-like_2021}, which studies the connection between various tree properties and simple independence relations in AECs. For NSOP$_1$ there is the work \cite{haykazyan_existentially_2021, dobrowolski_kim-independence_2022} that does consider the combinatorial property SOP$_1$ in positive logic. However, this is not nearly as general as the category-theoretic setting that we aim for here. This is why we use the term ``NSOP$_1$-like independence relation'', because it is an independence relation that is classically found in NSOP$_1$ theories, even though we do not consider the relevant combinatorial property.

\textbf{Main results}. Our main result is canonicity of NSOP$_1$-like independence relations. In the statement below $\ind^\lK$ denotes the independence relation obtained from long Kim-dividing.
\begin{theorem}[Canonicity of NSOP$_1$-like independence]
\thlabel{thm:canonicity-of-nsop1-like-independence}
Let $(\C, \M)$ be an AECat with the amalgamation property and let $\B$ be some base class. Suppose that $(\C, \M)$ satisfies the $\B$-existence axiom and suppose that there is an NSOP$_1$-like independence relation $\ind$ over $\B$. Then $\ind = \ind^{\lK}$ over $\B$.
\end{theorem}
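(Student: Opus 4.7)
The plan is to prove the two inclusions $\ind\subseteq\ind^{\lK}$ and $\ind^{\lK}\subseteq\ind$ over $\B$ separately, adapting the Kaplan--Ramsey style canonicity argument to the categorical setting and using only the properties packaged into the definition of an NSOP$_1$-like independence relation (invariance, existence over $\B$, monotonicity, extension, symmetry, some finite-character/local-character, and the independence theorem / 3-amalgamation over bases in $\B$).

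For the direction $\ind\subseteq\ind^{\lK}$, I would start from $A\ind^M_C B$ with $C\in\B$ and show that the Galois type $\gtp(A/BC)$ does not long-Kim-divide over $C$. Fix an isi-Morley sequence $(B_i)_i$ over $C$ with $B_0=B$. Using invariance together with extension and the $\B$-existence axiom, I would inductively find a realisation $A'$ of $\gtp(A/BC)$ that is $\ind$-independent from $\bigcup_i B_i$ over $C$. Invariance then transfers the type of $A/B$ simultaneously to every $B_i$, showing the sequence of conjugate types is consistent and hence that $\gtp(A/BC)$ does not long-Kim-divide over $C$. This gives $A\ind^{\lK}_C B$.

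For the converse $\ind^{\lK}\subseteq\ind$, I would proceed via the independence theorem in the classical zig-zag fashion. Suppose $A\ind^{\lK}_C B$. Using the $\B$-existence axiom, build a long enough isi-Morley sequence $(B_i)_{i<\kappa}$ over $C$ starting with $B$ that is, in addition, $\ind$-independent along $C$ (this is where isi-forking enters, matching the definition of $\ind^{\lK}$). Since no realisation of $\gtp(A/BC)$ long-Kim-divides, for each $i$ I can produce a realisation $A_i$ of $\gtp(A/BC)$ sitting independently from the corresponding $B_i$. Apply the NSOP$_1$-like independence theorem over $\B$ inductively along the sequence to amalgamate the $A_i$'s into a single realisation whose Galois type over $B\cup\bigcup_i B_i$ coincides with $\gtp(A/BC)$ on $B$. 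Finally, local character forces this constructed realisation to be $\ind$-independent from $B$ over $C$, yielding $A\ind^M_C B$.

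The main obstacle is the second direction, and within it the bookkeeping forced by the categorical setting. Where a first-order argument reasons inside a monster model with tuples, here one must check at every application of the independence theorem that the amalgam lies again in $\C$, that the resulting base object remains in $\B$, and that the Galois type of the amalgamated $A_i$ genuinely equals $\gtp(A/BC)$; the amalgamation property of $(\C,\M)$ and the $\B$-existence axiom are exactly what allow each such step. A secondary difficulty is that, unlike in the simple case, we cannot freely use full base monotonicity, so the inductive construction of the $A_i$ along $(B_i)_i$ must be arranged to only invoke the weaker NSOP$_1$-like form, which is why the independence theorem over $\B$ (rather than extension alone) is doing the decisive work.
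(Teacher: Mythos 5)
Your high-level plan — two inclusions, with the Independence Theorem doing the work in one direction and local character in the other — is the right shape, but in both directions the real content of the argument is missing. For $\ind\subseteq\ind^{\lK}$: long Kim-dividing quantifies over \emph{arbitrary} $\ind_c^{\isif}$-independent sequences, which need not be isi-sequences, so their members need not share a Lascar strong Galois type over $c$. Since \textsc{Independence Theorem} requires equality of Lascar strong Galois types, you first need a pigeonhole: \thref{prop:lascar-strong-galois-types-bounded} bounds the number of Lascar classes, and a long enough sequence therefore has a subsequence of any desired shorter length on which the Lascar strong Galois type over $c$ is constant; consistency is then shown only along that subsequence. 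Moreover, producing an $A'$ that is $\ind$-independent of $\bigcup_i B_i$ does not, ``by invariance,'' transfer $\gtp(A/BC)$ to each $B_i$ — that type-matching along the sequence is precisely the point of the generalised independence theorem (\thref{lem:independence-theorem-lemma}), which is the key lemma to isolate and prove. (Also, the existence axiom plays no role in this direction, since the sequence is given rather than constructed.)

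For $\ind^{\lK}\subseteq\ind$: non-long-Kim-dividing gives a \emph{single} realisation $a'$ of $\gtp(a,b,c)$ that is consistent along a subsequence $(b_i)_{i\in I}$, not a separate $A_i$ for each $i$, and there is nothing to amalgamate — the Independence Theorem is not used in this direction at all. The idea you are missing is the interaction between local character and the \emph{witnesses of independence} $(M_i)$ of the sequence: shrink $I$ to order type $\kappa$, arrange via \thref{lem:independence-witnesses-presentability} that each $M_i$ ($i\in I$) is $\kappa$-presentable, apply chain local character (\thref{lem:chain-local-character}) to find $i_0\in I$ with $a'\ind_{M_{i_0}}M_I$, and then chain together \textsc{Monotonicity}, \textsc{Symmetry}, the witness condition $b_{i_0}\ind_c M_{i_0}$, and \textsc{Transitivity} to reach $a'\ind_c b_{i_0}$; \textsc{Invariance} finishes because $\gtp(a',b_{i_0},c)=\gtp(a,b,c)$. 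Without this transitivity bridge there is no way to descend from independence over the model $M_{i_0}$ to independence over $C$, because \textsc{Base-Monotonicity} is not available for NSOP$_1$-like relations — so ``local character forces independence over $C$'' has no content as stated.
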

For a discussion about the assumption of the $\B$-existence axiom we refer to \thref{ex:b-existence}. All we say now is that it is a reasonable, and necessary, assumption, already in the very concrete setting of first-order logic.

We also slightly improve the main result from \cite{kamsma_kim-pillay_2020} on canonicity of simple independence relations. In the statement below $\ind^{\isid}$ and $\ind^{\isif}$ denote the independence relations obtained from isi-dividing and isi-forking respectively and $\base(\ind)$ denotes the class of objects that are allowed in the base of $\ind$. The slight improvement over \cite{kamsma_kim-pillay_2020} is the fact that we can restrict the base of our independence relation and the fact that we also get that $\ind = \ind^{\isif}$.
\begin{theorem}[Canonicity of simple independence]
\thlabel{thm:canonicity-of-simple-independence}
Let $(\C, \M)$ be an AECat with the amalgamation property, and suppose that $\ind$ is a simple independence relation. Then $\ind = \ind^{\isid} = \ind^{\isif}$ over $\base(\ind)$.
\end{theorem}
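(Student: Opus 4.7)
The plan is to build on the main result of \cite{kamsma_kim-pillay_2020}, which essentially established $\ind = \ind^{\isid}$ without base-class restrictions, and to extend it in two orthogonal directions: first, carry a base class $\B := \base(\ind)$ through the argument; second, fold in the coincidence with $\ind^{\isif}$. The overall strategy is to prove the cycle $\ind \subseteq \ind^{\isif} \subseteq \ind^{\isid} \subseteq \ind$ over $\B$.

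I would first handle the easy containment $\ind^{\isif} \subseteq \ind^{\isid}$ directly from the definitions: an instance of isi-forking is, by design, witnessed by a finite covering of isi-dividing instances, so any non-isi-forking triple is in particular non-isi-dividing, and this containment is base-class blind.

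Next I would show $\ind \subseteq \ind^{\isif}$. Suppose $A \ind^M_C B$ with $C \in \B$ and, for contradiction, assume $A \nind^{\isif, M}_C B$. Unpacking the definition of isi-forking produces a finite collection of isi-dividing witnesses, one of which any extension of $A$ (over $C$) satisfying the original configuration must realise. Using the extension axiom of a simple independence relation over $\B$ (together with invariance and monotonicity), I would produce an extended configuration in which $A'$ remains $\ind$-independent from the data witnessing isi-forking while inheriting one of the isi-dividing witnesses; combined with the containment $\ind \subseteq \ind^{\isid}$ over $\B$, this forces that witness to be non-isi-dividing, a contradiction.

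The heart of the argument, and the step I expect to be the main obstacle, is $\ind^{\isid} \subseteq \ind$ over $\B$. Here I would rerun the argument of \cite{kamsma_kim-pillay_2020}, which builds Morley sequences, amalgams and colimits out of $\ind$ in order to extract non-isi-dividing conclusions, but with careful bookkeeping so that every object appearing in the base of an independence triple (be it the given $C$, an intermediate base along a Morley sequence, or a base produced by some closure) lies in $\B$. The axioms of a simple independence relation over $\B$ (in particular existence, base-monotonicity, and the behaviour of independent sequences with base in $\B$) should provide precisely the data needed to keep every such base inside $\B$ throughout; when they do not do so directly, one typically reorganises the construction by first descending to a base in $\B$ via existence and then applying monotonicity. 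Verifying that every auxiliary object in the proof of \cite{kamsma_kim-pillay_2020} can be arranged to satisfy this constraint is the main technical task, but I expect no new conceptual ingredient beyond the base-class versions of the axioms in \thref{def:independence-relation-types}; once this is done the chain closes and we obtain $\ind = \ind^{\isid} = \ind^{\isif}$ over $\B$.
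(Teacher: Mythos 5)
Your decomposition into the cycle $\ind \subseteq \ind^{\isif} \subseteq \ind^{\isid} \subseteq \ind$ has a latent circularity. The argument you sketch for $\ind \subseteq \ind^{\isif}$ explicitly invokes $\ind \subseteq \ind^{\isid}$, but that containment is exactly what the cycle is supposed to deliver (as the composite of its first two arrows), and your remaining steps only supply the \emph{opposite} inclusion $\ind^{\isid} \subseteq \ind$. So at the moment you need $\ind \subseteq \ind^{\isid}$, you do not have it. What is actually required is that \emph{both} inclusions between $\ind$ and $\ind^{\isid}$ be proved directly: $\ind^{\isid} \subseteq \ind$ from \textsc{Club Local Character} and witnesses of independence (this is \thref{thm:isi-dividing-implies-abstract-independence}), and $\ind \subseteq \ind^{\isid}$ from the \textsc{Independence Theorem} via the generalised independence theorem (\thref{lem:independence-theorem-lemma}). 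Your prose for the ``heart of the argument'' --- building Morley sequences out of $\ind$ ``to extract non-isi-dividing conclusions'' --- in fact describes this second direction, but you label it as the first; the label is reversed, and as written one of the two nontrivial directions is simply missing from the plan.

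Once $\ind = \ind^{\isid}$ is in hand, the $\ind^{\isif}$ coincidence falls out more cleanly than via your bespoke step: the paper just observes that $\ind^{\isid} = \ind$ inherits \textsc{Extension} and \textsc{Right-Monotonicity} from being a basic independence relation, and then cites \thref{prop:isi-dividing-vs-isi-forking}, which says precisely that under those two properties isi-dividing and isi-forking coincide; your step 2 essentially re-proves one direction of that proposition. Two smaller points: isi-forking is witnessed by a \emph{possibly infinite} (not finite) family of isi-dividing instances, though the direction of the trivial inclusion $\ind^{\isif} \subseteq \ind^{\isid}$ is unaffected; and the base-class bookkeeping you flag as the main technical burden is in fact light --- in the hard direction, the given base $C \in \base(\ind)$ is used only in the initial \textsc{Extension} and the final appeal to \thref{lem:independence-theorem-lemma}, while the local-character step already lands on a model base, which lies in every base class by definition.
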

Combining the two main theorems we can compare stable, simple and NSOP$_1$-like independence relations, even without assuming the $\B$-existence axiom. This allows us to recover part of the original stability hierarchy based on independence relations, see also \thref{rem:recover-part-of-stability-hierarchy}.
\begin{theorem}
\thlabel{thm:independence-hierarchy}
Let $(\C, \M)$ be an AECat with the amalgamation property and suppose that $\ind$ is a stable or a simple independence relation in $(\C, \M)$. Suppose furthermore that $\ind^*$ is an NSOP$_1$-like independence relation in $(\C, \M)$ with $\base(\ind) = \base(\ind^*)$. Then
\[
\ind = \ind^*  = \ind^{\isid} = \ind^{\isif} = \ind^{\lK}.
\]
\end{theorem}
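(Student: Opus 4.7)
The plan is to chain the two canonicity theorems (\thref{thm:canonicity-of-simple-independence} and \thref{thm:canonicity-of-nsop1-like-independence}) applied to $\ind$ and $\ind^*$ respectively. The subtlety is that \thref{thm:canonicity-of-nsop1-like-independence} requires the $\B$-existence axiom as a hypothesis, which is not assumed in the statement to be proved. The key observation will be that the presence of a simple (or stable) independence relation with base $\B$ already forces this axiom to hold.

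First, since any stable independence relation is in particular simple by the nesting of the classes in \thref{def:independence-relation-types}, I may assume without loss of generality that $\ind$ is simple. Likewise, any simple independence relation is NSOP$_1$-like, so $\ind$ too is NSOP$_1$-like over $\B := \base(\ind) = \base(\ind^*)$.

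Next, I would derive the $\B$-existence axiom from the simplicity of $\ind$. A simple independence relation includes among its defining properties an existence property over its base: for every object $A$ and every $C \in \B$ one can witness $A \ind_C A$ via a suitable realisation. Unpacking what this realisation looks like in the AECat $(\C, \M)$ yields precisely the $\B$-existence axiom, since that axiom is designed to be the category-theoretic residue of the existence property of forking/dividing in the classical setting. This translation step is where I expect most of the technical care to lie, because one must check that the category-theoretic formulation of existence built into a simple independence relation genuinely matches the $\B$-existence axiom used as a hypothesis in \thref{thm:canonicity-of-nsop1-like-independence}; everything else in the argument is formal.

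With the $\B$-existence axiom established, \thref{thm:canonicity-of-nsop1-like-independence} can be applied to both $\ind$ and $\ind^*$ (both NSOP$_1$-like over the same $\B$) to conclude $\ind = \ind^{\lK} = \ind^*$ over $\B$. Finally, \thref{thm:canonicity-of-simple-independence} applied to $\ind$ gives $\ind = \ind^{\isid} = \ind^{\isif}$ over $\B$, and chaining the equalities produces the full hierarchy
\[
\ind = \ind^* = \ind^{\isid} = \ind^{\isif} = \ind^{\lK}
\]
over $\B$, as required.
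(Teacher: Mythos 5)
There is a genuine gap in the step where you claim to derive the $\B$-existence axiom by ``unpacking'' the \textsc{Existence} property of $\ind$. The $\B$-existence axiom (\thref{def:existence-axiom}) asserts that $A \ind_C^{\isif,M} C$ for all $C \in \B$ --- it is a statement about the relation $\ind^{\isif}$ (isi-forking), not about $\ind$. The \textsc{Existence} property of a simple $\ind$ gives $A \ind_C^M C$ for $C \in \base(\ind)$, and a priori these are different independence relations, so there is no ``translation'' or ``unpacking'' that links them directly. The phrase that the $\B$-existence axiom is ``designed to be the category-theoretic residue of the existence property'' does not help: that is a heuristic, not a bridge.

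The link you need is itself \thref{thm:canonicity-of-simple-independence}, which you have deferred to the very end. Applied first, it yields $\ind = \ind^{\isid} = \ind^{\isif}$ over $\base(\ind)$, and only \emph{then} does the \textsc{Existence} of $\ind$ transfer to $\ind^{\isif}$ to give the $\B$-existence axiom. So the correct order is: simple canonicity first (giving the identification with isi-forking and hence the existence axiom), then NSOP$_1$-like canonicity applied to both $\ind$ (which is NSOP$_1$-like since simple) and $\ind^*$. Reordering the steps of your plan in this way, and replacing the vague ``unpacking'' step by the explicit invocation of $\ind = \ind^{\isif}$, closes the gap and matches the paper's proof.
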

\textbf{Overview.} We start by recalling the framework of AECats in Section \ref{sec:aecats}. To make sense of types in this framework we also recall the notion of Galois types.

In Section \ref{sec:lascar-strong-galois-types} we will define what we call \emph{Lascar strong Galois types}, based on the notion of Lascar strong types in first-order logic. These are necessary for a property called \textsc{Independence Theorem} for independence relations later.

We recall the notion of an independence relation in an AECat in Section \ref{sec:independence-relations}. We also recall the notion of an \emph{independent sequence} in this section, and prove that these exist assuming only very few basic properties for an independence relation.

In Section \ref{sec:independence-theorem-3-amalgamation-and-stationarity} we investigate the well-known equivalence between the properties \textsc{Independence Theorem}  and \textsc{3-amalgamation} and we prove this fact for AECats. We also recall that \textsc{Stationarity} implies both of them. All of this is over models.

In Section \ref{sec:long-dividing} we recall the notion of isi-dividing and introduce the notions of long dividing, isi-forking and long Kim-dividing. We also discuss connections to the classical analogues: dividing, forking and Kim-dividing.

Section \ref{sec:canonicity} contains the main results, the canonicity theorems. After those theorems we discuss how this work extends and brings together previously known results in different (less general) frameworks.

Finally, Section \ref{sec:lgtp-vs-bounded-gtp} explores the notion of Lascar strong Galois type further. Lascar strong types are known to heavily interact with independence relations in first-order logic and we prove that this is also the case for Lascar strong Galois types.

\textbf{Acknowledgements.} I would like to thank Jonathan Kirby for his feedback which greatly improved the presentation of this paper. I would also like to thank the anonymous referee whose remarks further helped to improve the presentation of this paper. This paper is part of a PhD project at the UEA (University of East Anglia), and as such is supported by a scholarship from the UEA.
\section{AECats}
\label{sec:aecats}
In this section we recall definitions and basic results for AECats from \cite{kamsma_kim-pillay_2020}. We assume that the reader is familiar with the framework of accessible categories. A great reference for this is \cite{adamek_locally_1994}.
\begin{convention}
\thlabel{conv:regular-cardinals}
Throughout this paper we are only interested in regular cardinals, which we usually denote by $\kappa$, $\lambda$ and $\mu$.
\end{convention}
\begin{definition}[{\cite[Definition 2.5]{kamsma_kim-pillay_2020}}]
\thlabel{def:aecat}
An \emph{AECat}, short for \emph{abstract elementary category}, consists of a pair $(\C, \M)$ where $\C$ and $\M$ are accessible categories and $\M$ is a full subcategory of $\C$ such that:
\begin{enumerate}[label=(\roman*)]
\item $\M$ has directed colimits, which the inclusion functor into $\C$ preserves;
\item all arrows in $\C$ (and thus in $\M$) are monomorphisms.
\end{enumerate}
The objects in $\M$ are called \emph{models}. We say that $(\C, \M)$ has the \emph{amalgamation property} (or \emph{AP}) if $\M$ has the amalgamation property.
\end{definition}
We refer to \cite{kamsma_kim-pillay_2020} for the motivation of this definition and elaborate examples. Below we just summarise some examples of AECats.
\begin{example}
\thlabel{ex:aecats}
The following are examples of AECats.
\begin{enumerate}[label=(\roman*)]
\item For a first-order theory $T$ we write $\Mod(T)$ for the category of models with elementary embeddings. Then $(\Mod(T), \Mod(T))$ is an AECat with AP.
\item Fix some first-order theory $T$. Write $\SubMod(T)$ for the category of subsets of models of $T$. That is, objects are pairs $(A, M)$ where $A \subseteq M$ and $M \models T$. An arrow $f: (A, M) \to (B, N)$ is an elementary map $f: A \to B$. The inclusion $\Mod(T) \hookrightarrow \SubMod(T)$ sending $M$ to $(M, M)$ is full and faithful. Thus $(\SubMod(T), \Mod(T))$ is an AECat with AP.
\item Examples (i) and (ii) generalise to positive logic, because any first-order theory can be seen as a positive theory through Morleyisation. So we use the same notation. That is, for a positive theory $T$ we have the category $\Mod(T)$ of existentially closed models and immersions and the category $\SubMod(T)$ of subsets of existentially closed models with immersions between those subsets. Then we have the following AECats with AP: $(\Mod(T), \Mod(T))$ and $(\SubMod(T), \Mod(T))$.
\item A similar construction to examples (i) and (ii) is possible for continuous logic. For a continuous theory $T$ we can form categories $\MetMod(T)$ of models of $T$ and $\SubMetMod(T)$ of closed subsets of models of $T$, see \cite[Example 2.10]{kamsma_kim-pillay_2020} for more details. Then $(\MetMod(T), \MetMod(T))$ and $(\SubMetMod(T), \MetMod(T))$ are AECats with AP.
\item Let $\K$ be an AEC. We view $\K$ as a category by taking as arrows $\K$-embeddings. Then $(\K, \K)$ is an AECat and has AP iff $\K$ has AP.
\end{enumerate}
\end{example}
In \cite[Example 2.11]{kamsma_kim-pillay_2020} there is also a construction to consider subsets of the structures in an AEC, similar to the construction of $\SubMod(T)$. There is a simpler construction that does not need the assumptions on $\K$ in that example.
\begin{example}
\thlabel{ex:subsets-aec}
Let $\K$ be an AEC. We define the category of subsets of $\K$, written as $\SubSet(K)$, as follows. Objects are pairs $(A, M)$ where $A \subseteq M$ and $M \in \K$. An arrow $f: (A, M) \to (B, N)$ is then a $\K$-embedding $f: M \to N$ such that $f(A) \subseteq B$. One easily verifies that $(\SubSet(\K), \K)$ is an AECat, and it has AP exactly when $\K$ has AP.
\end{example}
\begin{definition}
\thlabel{def:kappa-aecat}
We call an AECat $(\C, \M)$ a \emph{$\kappa$-AECat} if $\C$ and $\M$ are both $\kappa$-accessible and the inclusion functor preserves $\kappa$-presentable objects.
\end{definition}
\begin{fact}[{\cite[Remark 2.8]{kamsma_kim-pillay_2020}}]
\thlabel{fact:kappa-aecat}
For any AECat $(\C, \M)$ there are arbitrarily large $\kappa$ such that $(\C, \M)$ is a $\kappa$-AECat.
\end{fact}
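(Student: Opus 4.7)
The plan is to reduce to three cofinal conditions on regular cardinals and then intersect them. Fix an arbitrary regular cardinal $\kappa_0$; the goal is to produce $\kappa \geq \kappa_0$ for which $\C$ is $\kappa$-accessible, $\M$ is $\kappa$-accessible, and the inclusion functor $I \colon \M \to \C$ preserves $\kappa$-presentable objects. Each of these three requirements is individually known to be satisfied on a cofinal class of regular cardinals; the whole point of the proof is to observe that their intersection is still cofinal.

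For the first two requirements I would invoke the standard ``raising the index of accessibility'' theorem from accessible category theory (Ad\'amek--Rosick\'y, Theorem 2.11): if an accessible category is $\lambda$-accessible, then it is also $\kappa$-accessible for every regular $\kappa$ with $\lambda \triangleleft \kappa$ (sharply less). Since the class $\{\kappa : \lambda \triangleleft \kappa\}$ is cofinal in the regular cardinals, the class of $\kappa$ for which $\C$ is $\kappa$-accessible is cofinal, and so is the corresponding class for $\M$. Intersecting these two cofinal classes is handled by the fact that each is eventually of the form $\{\kappa : \kappa \triangleright \mu\}$, and finite intersections of such tails are again of this form.

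For the third requirement I would use that $I \colon \M \to \C$ preserves directed colimits by the very definition of an AECat, so $I$ is in particular an accessible functor between accessible categories. The standard theorem (Ad\'amek--Rosick\'y, Theorem 2.19) then gives that an accessible functor preserves $\kappa$-presentable objects (and is itself $\kappa$-accessible) for arbitrarily large $\kappa$, again a cofinal class. Intersecting this third cofinal class with the previous intersection produces the desired $\kappa \geq \kappa_0$, and since $\kappa_0$ was arbitrary this yields arbitrarily large $\kappa$-AECats.

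The main obstacle, insofar as there is one, is purely bookkeeping about the sharp-less relation $\triangleleft$: one needs to be sure that the ``for arbitrarily large $\kappa$'' statements issued by the two cited theorems can be arranged simultaneously above any prescribed threshold. This dissolves on noticing that all three classes are in fact tails under $\triangleright$, so there is a single $\mu$ with $\mu \triangleleft \kappa$ already implying all three conditions, after which cofinality is immediate.
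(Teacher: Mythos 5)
The proposal is correct, and the central tool---Ad\'amek--Rosick\'y's uniformization theorem for accessible functors---is the standard argument here (the statement is quoted as a Fact from an earlier paper and not re-proved in the present text, but this is what the cited remark amounts to). However, your argument does strictly more work than it needs to. A single application of Theorem~2.19 to the inclusion functor $I\colon\M\to\C$ (which, as you observe, is accessible because it preserves directed colimits by the definition of an AECat) already produces arbitrarily large regular $\kappa$ such that $I$ is $\kappa$-accessible \emph{and} preserves $\kappa$-presentable objects. But the very definition of a $\kappa$-accessible functor (AR~2.16) requires both the domain and codomain to be $\kappa$-accessible categories, so this one step simultaneously delivers $\kappa$-accessibility of $\M$, $\kappa$-accessibility of $\C$, and preservation of $\kappa$-presentables. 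Your separate invocations of the index-raising theorem (AR~2.11) for $\C$ and for $\M$, together with the bookkeeping about intersecting cofinal classes along $\triangleleft$, are therefore subsumed and can be deleted. As a minor technical remark on that bookkeeping: the classes $\{\kappa : \mu \triangleleft \kappa\}$ are not literally tails of the ordinals under $\leq$; the reason finite intersections of such classes remain cofinal is that $\triangleleft$ is monotone in its left argument ($\mu'\leq\mu\triangleleft\kappa$ implies $\mu'\triangleleft\kappa$), so the intersection contains $\{\kappa : \max_i\mu_i\triangleleft\kappa\}$. Since you do not in fact need any intersection, this is moot, but it is worth keeping straight when the sharp-less relation genuinely matters.
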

\begin{proposition}
\thlabel{prop:kappa-aecat-and-above}
Let $(\C, \M)$ be a $\kappa$-AECat and let $\lambda \geq \kappa$. Then $\M$ is $\lambda$-accessible and the inclusion functor preserves and reflects $\lambda$-presentable objects.
\end{proposition}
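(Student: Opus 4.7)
The plan is to establish the three assertions in order: that $\M$ is $\lambda$-accessible, that the inclusion preserves $\lambda$-presentable objects, and that the inclusion reflects them. The main obstacle lies in the first assertion: constructing a $\lambda$-directed presentation of every object of $\M$ out of $\lambda$-presentables, which crucially uses that $\M$ has \emph{all} directed colimits, not merely $\kappa$-directed ones.

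For $\lambda$-accessibility of $\M$: given $X \in \M$, I would use the $\kappa$-accessibility of $\M$ to write $X = \colim_{i \in I} X_i$ as a $\kappa$-directed colimit of $\kappa$-presentable objects of $\M$. I then pass to the collection $\F$ of $\lambda$-small directed subsets $J \subseteq I$, and for each $J \in \F$ set $X_J = \colim_{i \in J} X_i$ computed in $\M$ (this exists because $\M$ has all directed colimits). Each $X_J$ is a $\lambda$-small directed colimit of $\kappa$-presentable objects, and a $\lambda$-small colimit of $\lambda$-presentable objects is $\lambda$-presentable, so each $X_J$ is $\lambda$-presentable; moreover there are only set-many such $X_J$ up to isomorphism. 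The key verification is that $\F$, ordered by inclusion, is $\lambda$-directed: given fewer than $\lambda$ elements $J_\alpha$ of $\F$, one must enlarge $\bigcup_\alpha J_\alpha$ to a directed subset of $I$ without exceeding size $\lambda$. Once this is in place, $X$ is the colimit of the $\lambda$-directed system $(X_J)_{J \in \F}$, giving $\lambda$-accessibility.

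For preservation: if $A \in \M$ is $\lambda$-presentable in $\M$, applying the above presentation to $A$ itself shows that the identity $A \to A$ factors through some $X_J$, making $A$ a retract of $X_J$ in $\M$. Since the inclusion preserves $\kappa$-presentable objects by hypothesis and preserves directed colimits, each $X_J$ is a $\lambda$-small directed colimit of $\kappa$-presentables also when viewed in $\C$, hence $\lambda$-presentable in $\C$. Retracts of $\lambda$-presentables are $\lambda$-presentable, so $A$ is $\lambda$-presentable in $\C$.

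For reflection: suppose $A \in \M$ is $\lambda$-presentable in $\C$, and let $(B_i)$ be a $\lambda$-directed diagram in $\M$ with colimit $B \in \M$. Preservation of directed colimits forces $B$ to be the colimit in $\C$ as well. Fullness of $\M$ in $\C$ then ensures that any morphism $A \to B$ or factorization $A \to B_i \to B$ in $\C$ already lies in $\M$, so the existence and essential uniqueness of factorizations supplied by $\lambda$-presentability in $\C$ descend to $\M$. Hence $A$ is $\lambda$-presentable in $\M$, completing the argument.
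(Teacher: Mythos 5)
Your proof is correct in substance and takes a genuinely different, self-contained route compared with the paper, which simply cites \cite[Propositions 4.1, 4.3 and Lemma 3.6]{beke_abstract_2012} and adapts them. Two points are worth comparing.

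First, for reflection of $\lambda$-presentables you argue directly: since $\M \hookrightarrow \C$ is full and preserves $\lambda$-directed colimits, the canonical map $\colim_i \Hom_\M(A,B_i) \to \Hom_\M(A,B)$ is identified with $\colim_i \Hom_\C(A,B_i) \to \Hom_\C(A,B)$, which is a bijection by $\lambda$-presentability in $\C$. This is cleaner than the route the paper invokes: Beke--Rosick\'y's Lemma 3.6 (written for a not-necessarily-full functor) proceeds by a retract argument and needs the inclusion to reflect split epimorphisms, which the paper supplies by noting that all arrows of $\C$ are monomorphisms. In the fully faithful case your Hom-set argument sidesteps that hypothesis entirely; the paper's citation is more machinery than is strictly needed here, while your argument buys economy at the cost of being tailored to the full-subcategory situation.

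Second, you explicitly flag but do not carry out the verification that the poset $\F$ of $\lambda$-small directed subsets of $I$ is $\lambda$-directed. This is the crux of $\lambda$-accessibility and should be closed: given fewer than $\lambda$ sets $J_\alpha \in \F$, their union $S$ has size $<\lambda$ by regularity, and one closes $S$ under chosen binary upper bounds (available since $I$ is directed) in $\omega$ steps; each step keeps the cardinality at $|S|$, so the closure is a directed subset still of size $<\lambda$. This is routine, and it is what underlies \cite[Proposition 4.1]{beke_abstract_2012}, but a reader would want it spelled out since it is precisely where ``$\M$ has all directed colimits'' is doing work over the general accessible-category situation, where $\lambda$-accessibility for all $\lambda \geq \kappa$ would fail.

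Everything else -- the construction of $X_J$ as $\lambda$-small directed colimits of $\kappa$-presentables, their $\lambda$-presentability in both $\M$ and $\C$ (using that $\lambda$-small limits commute with $\lambda$-directed colimits in $\mathbf{Set}$), the retract argument for preservation, and the use of fullness -- is correct and well-ordered.
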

\begin{proof}
The claim about $\lambda$-accessibility is exactly \cite[Proposition 4.1]{beke_abstract_2012}. Preservation and reflection of $\lambda$-presentable objects follows using the same proofs as \cite[Proposition 4.3]{beke_abstract_2012} and \cite[Lemma 3.6]{beke_abstract_2012} respectively, where in the latter we use the former and the fact that all arrows in $\C$ are monomorphisms, so the inclusion functor reflects split epimorphisms (which are isomorphisms).
\end{proof}
\begin{definition}
\thlabel{def:extension}
Let $M$ be a model in an AECat. An \emph{extension} of $M$ is an arrow $M \to N$, where $N$ is some model.
\end{definition}
\begin{convention}
\thlabel{conv:extending-monomorphisms}
Usually, there will be only one relevant extension of models. So to prevent cluttering of notation we will not give such an extension a name. Given such an extension $M \to N$ and some arrow $a: A \to M$ we will then denote the arrow $A \xrightarrow{a} M \to N$ by $a$ as well.
\end{convention}
\begin{definition}
\thlabel{def:galois-type}
Let $(\C, \M)$ be an AECat with AP. We will use the notation $((a_i)_{i \in I}; M)$ to mean that the $a_i$ are arrows into $M$ and that $M$ is a model.

We say that two tuples $((a_i)_{i \in I}; M)$ and $((a_i')_{i \in I}; M')$ have the same \emph{Galois type}, and write
\[
\gtp((a_i)_{i \in I}; M) = \gtp((a_i')_{i \in I}; M'),
\]
if $\dom(a_i) = \dom(a_i')$ for all $i \in I$, and there is a common extension $M \to N \leftarrow M'$ such that the following commutes for all $i \in I$:
\[
\begin{tikzcd}[row sep=tiny]
 & N &  \\
M \arrow[ru] &  & M' \arrow[lu] \\
 & A_i \arrow[lu, "a_i"] \arrow[ru, "a_i'"'] & 
\end{tikzcd}
\]
\end{definition}
Note that AP ensures that having the same Galois type is an equivalence relation. For this reason, we are only interested in AECats with AP in the rest of this paper.
\begin{fact}[{\cite[Proposition 3.8]{kamsma_kim-pillay_2020}}]
\thlabel{fact:galois-types}
If $\gtp((a_i)_{i \in I}; M) = \gtp((a_i')_{i \in I}; M')$ then:
\begin{enumerate}[label=(\roman*)]
\item (restriction) we have $\gtp((a_i)_{i \in I_0}; M) = \gtp((a_i')_{i \in I_0}; M')$ for any $I_0 \subseteq I$;
\item (monotonicity) given an arrow $b_i: B_i \to \dom(a_i)$ for each $i \in I$, then
\[
\gtp((a_i)_{i \in I}, (a_i b_i)_{i \in I}; M) = \gtp((a_i')_{i \in I}, (a_i' b_i)_{i \in I}; M')
\]
and thus $\gtp((a_i b_i)_{i \in I}; M) = \gtp((a_i' b_i)_{i \in I}; M')$;
\item (extension) for any $(b; M)$ there is an extension $M' \to N$ and some $(b'; N)$ such that $\gtp(b, (a_i)_{i \in I}; M) = \gtp(b', (a_i')_{i \in I}; N)$.
\end{enumerate}
\end{fact}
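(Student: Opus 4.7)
The plan is that all three properties follow by unfolding \thref{def:galois-type}: a common extension $M \to N \leftarrow M'$ witnessing Galois type equality for the whole family $(a_i)_{i \in I}$ can be reused, sometimes with tiny additions, to witness each new equality. Amalgamation (which is our standing hypothesis) was already used to make Galois type equality an equivalence relation, so we are free to combine or re-use extensions without worry.

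For (i), the commuting triangles required for a subfamily $I_0 \subseteq I$ are a subset of those already available in $N$, so the same witness works. For (ii), given $b_i: B_i \to \dom(a_i)$, pasting $b_i$ on top of the commuting triangle for $a_i, a_i'$ inside $N$ produces a commuting triangle for $a_i b_i, a_i' b_i$ with common domain $B_i$. Hence the enlarged pair of families $((a_i)_{i \in I}, (a_i b_i)_{i \in I})$ and $((a_i')_{i \in I}, (a_i' b_i)_{i \in I})$ still have equal Galois type witnessed by the same $N$; the second displayed equation in (ii) is then immediate by restriction via (i).

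For (iii), take a witness $M \to N \leftarrow M'$ for the equality of $\gtp((a_i)_{i \in I}; M)$ and $\gtp((a_i')_{i \in I}; M')$. Declare the required extension of $M'$ to be $M' \to N$ itself, and define $b': B \to N$ as the composite $B \xrightarrow{b} M \to N$. To check $\gtp(b, (a_i)_{i \in I}; M) = \gtp(b', (a_i')_{i \in I}; N)$, use the trivial common extension $M \to N \xleftarrow{\mathrm{id}_N} N$: the triangle for $b, b'$ commutes by the definition of $b'$, and the triangles for $a_i, a_i'$ already commute in $N$ by the choice of witness. The whole argument is essentially a bookkeeping exercise with commuting diagrams; no serious obstacle arises beyond keeping track of which arrows share domains and sit inside which common extension.
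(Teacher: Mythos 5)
Your argument is correct and is the standard one: reuse the witnessing common extension $M \to N \leftarrow M'$, with the only mild subtlety being in (iii), where you push $b$ forward along $M \to N$ and note that $M \to N \xleftarrow{\mathrm{id}} N$ serves as the new common extension. Since this Fact is imported verbatim from \cite[Proposition 3.8]{kamsma_kim-pillay_2020} the paper gives no proof of its own, but your reasoning matches the intended one.
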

\begin{fact}[{\cite[Proposition 3.9]{kamsma_kim-pillay_2020}}]
\thlabel{fact:galois-type-factorisation}
If $\gtp(a, b; M) = \gtp(a', b'; M')$ and $a$ factors through $b$, say as $a = bi$, then $a'$ factors through $b'$ in the same way, so as $a' = b' i$.
\end{fact}
\begin{definition}
\thlabel{def:galois-type-set}
Let $(\C, \M)$ be an AECat with AP. For a tuple $(A_i)_{i \in I}$ of objects in $\C$, let $S((A_i)_{i \in I})$ be the collection of all tuples $((a_i)_{i \in I}; M)$ such that $\dom(a_i) = A_i$. We define the \emph{Galois type set} $\Sgtp((A_i)_{i \in I})$ as:
\[
\Sgtp((A_i)_{i \in I}) = S((A_i)_{i \in I}) / \sim_{gtp},
\]
where $\sim_{gtp}$ is the equivalence relation of having the same Galois type.
\end{definition}
\begin{fact}[{\cite[Proposition 4.6]{kamsma_kim-pillay_2020}}]
\thlabel{fact:galois-type-set}
$\Sgtp((A_i)_{i \in I})$ is really just a set.
\end{fact}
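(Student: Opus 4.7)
The plan is to show that $\Sgtp((A_i)_{i \in I})$ is bounded in size by exhibiting, for every Galois type, a representative whose model is $\kappa$-presentable for a single suitable $\kappa$. Since there is only a set of $\kappa$-presentable models up to isomorphism, and for each such model $M_0$ there is only a set of tuples of arrows $(A_i \to M_0)_{i \in I}$ (because $\Hom(A_i, M_0)$ is a set and $I$ is a set), this will witness that $\Sgtp((A_i)_{i \in I})$ is a set.

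First, invoke \thref{fact:kappa-aecat} to fix a regular cardinal $\kappa$ such that $(\C, \M)$ is a $\kappa$-AECat. By enlarging $\kappa$ if necessary, we can also arrange that $\kappa > |I|$ and that each $A_i$ is $\kappa$-presentable in $\C$ for every $i \in I$; here we use \thref{prop:kappa-aecat-and-above} to ensure the AECat conditions persist after enlarging $\kappa$, and the fact that each individual $A_i$ is $\lambda_i$-presentable for some $\lambda_i$.

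Next, take an arbitrary representative $((a_i)_{i \in I}; M)$. By $\kappa$-accessibility of $\M$, write $M$ as a $\kappa$-directed colimit of $\kappa$-presentable models $(M_j)_{j \in J}$, with colimit injections $\iota_j: M_j \to M$. By \thref{def:aecat}(i) this colimit is preserved by the inclusion into $\C$, so we may work with the same diagram there. By $\kappa$-presentability of each $A_i$, each arrow $a_i: A_i \to M$ factors through some $\iota_{j(i)}$. Since $|I| < \kappa$ and $J$ is $\kappa$-directed, there exists $j \in J$ with $j(i) \leq j$ for all $i$, and hence common factorisations $a_i': A_i \to M_j$ with $\iota_j \circ a_i' = a_i$. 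Using $N := M$ together with the identity $M \to M$ and $\iota_j: M_j \to M$ as witnesses in \thref{def:galois-type}, we obtain
\[
\gtp((a_i)_{i \in I}; M) = \gtp((a_i')_{i \in I}; M_j),
\]
so every Galois type has a representative with $\kappa$-presentable model.

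The main (and essentially only) obstacle is the possible largeness of the index set $I$; this is what forces us to enlarge $\kappa$ past $|I|$, in order to convert the individual factorisations through $M_{j(i)}$ into a single factorisation through one $M_j$ via $\kappa$-directedness. Once this step is in place, the counting argument is immediate: the collection of $\kappa$-presentable objects in $\M$ has a small skeleton by $\kappa$-accessibility, and for each such $M_0$ the collection of $I$-indexed tuples $(A_i \to M_0)_{i \in I}$ is a set, so the set of representatives obtained above is small and its image in $\Sgtp((A_i)_{i \in I})$ is therefore a set.
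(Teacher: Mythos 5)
Your proof is correct and follows the standard argument for results of this kind in accessible-category settings; since the paper only cites \cite[Proposition 4.6]{kamsma_kim-pillay_2020} without reproducing the proof, I cannot compare line by line, but your route (enlarge $\kappa$ past $|I|$ and the presentability ranks of the $A_i$, pull each representative back to a $\kappa$-presentable submodel via the factorisation property of $\kappa$-presentable objects through $\kappa$-directed colimits preserved by the inclusion $\M\hookrightarrow\C$, then count over a small skeleton) is the natural one and is what the cited proof does. The only cosmetic point: when you conclude "the set of representatives obtained above is small", what you really mean is that after replacing each $M_j$ by an isomorphic object in a small skeleton of $\kappa$-presentable models — which does not change the Galois type, taking the isomorphism itself as the common extension — the representatives land in a set; making that replacement explicit would tighten the last step.
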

\begin{definition}
\thlabel{def:sequences}
Fix some AECat $(\C, \M)$ with AP.
\begin{enumerate}[label=(\roman*)]
\item A \emph{sequence} is a tuple $((a_i)_{i \in I}; M)$ where every $a_i$ has the same domain and $I$ is a linear order.
\item A \emph{chain} is a diagram of ordinal shape. We call a chain $(M_i)_{i < \kappa}$ \emph{continuous} if $M_\ell = \colim_{i < \ell} M_i$ for all limit $\ell < \kappa$. Given a chain $(M_i)_{i < \kappa}$ we say that $M$ is a \emph{chain bound} for $(M_i)_{i < \kappa}$ if there are arrows $m_i: M_i \to M$ forming a cocone for $(M_i)_{i < \kappa}$.
\item A \emph{chain of initial segments} for some sequence $((a_i)_{i < \kappa}; M)$ is a continuous chain $(M_i)_{i < \kappa}$ of models with chain bound $M$ such that $a_i$ factors through $M_{i+1}$ for all $i < \kappa$.
\item Let $(M_i)_{i < \kappa}$ be a chain with chain bound $M$ and let $c: C \to M$ be some arrow. We say that \emph{$c$ embeds in $(M_i)_{i < \kappa}$} if $c$ factors as $C \to M_0 \to M$.
\item We call a sequence $((a_i)_{i < \kappa}; M)$ together with a chain of initial segments $(M_i)_{i < \kappa}$ an \emph{isi-sequence} (short for \emph{initial segment invariant}) if for all $i \leq j < \kappa$ we have
\[
\gtp(a_i, m_i; M) = \gtp(a_j, m_i; M).
\]
For $c: C \to M$ we say this is an \emph{isi-sequence over $c$} if $c$ embeds in $(M_i)_{i < \kappa}$.
\end{enumerate}
\end{definition}
\begin{convention}
\thlabel{conv:chain-of-initial-segments}
For a chain of initial segments $(M_i)_{i < \kappa}$ for some sequence $(a_i)_{i < \kappa}$ in $M$ we will abuse notation and view $a_i$ as an arrow into $M_j$ for $i < j$. Similarly, if $c$ embeds in $(M_i)_{i < \kappa}$, we view $c$ as an arrow into $M_i$ for all $i < \kappa$.
\end{convention}
\begin{lemma}
\thlabel{lem:independence-witnesses-presentability}
Suppose that $(\C, \M)$ is a $\mu$-AECat and let $\kappa \geq \mu$. Suppose furthermore that we are given a sequence $(a_i)_{i < \kappa}$ in some $M$ with a chain of initial segments $(M_i)_{i < \kappa}$ and some $c$ that embeds in this chain, such that $\dom(a_i)$ (which is the same for all $i$) and $\dom(c)$ are $\kappa$-presentable. Then there is a chain of initial segments $(M_i')_{i < \kappa}$ in which $c$ embeds, such that for all $i < \kappa$ the inclusion of $M_i'$ into $M$ factors through $M_i$ (so $M_i' \leq M_i$) and $M_i'$ is $\kappa$-presentable.
\end{lemma}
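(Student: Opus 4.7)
The plan is to construct the chain $(M_i')_{i<\kappa}$ by transfinite recursion on $i$, extracting at each stage a $\kappa$-presentable submodel of $M_i$ that carries just enough data to continue. By \thref{prop:kappa-aecat-and-above}, since $\kappa \geq \mu$, the category $\M$ is $\kappa$-accessible, so every model in $\M$ is a $\kappa$-directed colimit of $\kappa$-presentable models; because all arrows in $\C$ are monomorphisms, the legs of any such presentation exhibit these $\kappa$-presentable components as honest subobjects.

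For the base stage, write $M_0 = \colim_j N_j$ with each $N_j$ a $\kappa$-presentable model. Since $\dom(c)$ is $\kappa$-presentable, the arrow $c : \dom(c) \to M_0$ factors through some $N_{j_0}$; set $M_0' = N_{j_0}$. At a successor stage, assume $M_i' \leq M_i$ is $\kappa$-presentable and the chain $(M_{i'}')_{i' \leq i}$ is already coherent. Write $M_{i+1}$ as a $\kappa$-directed colimit of $\kappa$-presentable models $(N_j)_j$. The composite $M_i' \to M_i \to M_{i+1}$ factors through some $N_{j_1}$ by $\kappa$-presentability of $M_i'$, and $a_i : \dom(a_i) \to M_{i+1}$ factors through some $N_{j_2}$ by $\kappa$-presentability of $\dom(a_i)$. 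Picking $j \geq j_1, j_2$ by $\kappa$-directedness and setting $M_{i+1}' = N_j$ gives a $\kappa$-presentable submodel of $M_{i+1}$ into which $M_i'$ embeds coherently and through which $a_i$ factors. For a limit $\ell < \kappa$, take $M_\ell' = \colim_{i<\ell} M_i'$, computed in $\M$; since $\kappa$ is regular and $\ell < \kappa$ we have $|\ell| < \kappa$, so this is a $\kappa$-small colimit of $\kappa$-presentable objects and is therefore $\kappa$-presentable. The natural transformation $(M_i' \to M_i)_{i<\ell}$ into the (continuous) colimit presentation of $M_\ell$ induces the required arrow $M_\ell' \to M_\ell$, which is a monomorphism since every arrow in $\C$ is.

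Finishing, the verification that $(M_i')_{i<\kappa}$ is a chain of initial segments is then routine: continuity at limits is forced by the construction, the composites $M_i' \to M_i \to M$ form a cocone making $M$ a chain bound, each $a_i$ factors through $M_{i+1}'$ by the successor step, and $c$ embeds in $M_0'$ by the base step. The main subtlety I expect is the coherence at successor stages: one needs the arrow $M_i' \to M_{i+1}'$ to cohere with the original $M_i \to M_{i+1}$, not merely for both $M_i'$ and $M_{i+1}'$ to sit inside their respective ambient models. The factorisation through a common $\kappa$-presentable leg $N_j$, combined with the fact that every arrow in $\C$ is a monomorphism (which makes all such factorisations unique), is precisely what supplies this coherence.
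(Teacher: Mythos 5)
Your proof is correct and takes essentially the same approach as the paper: build $(M_i')$ by transfinite recursion, at the base and successor steps factoring the $\kappa$-presentable data through a $\kappa$-presentable leg of a $\kappa$-directed colimit presentation of $M_i$ (using \thref{prop:kappa-aecat-and-above}), and at limit stages taking the colimit of the chain so far. The extra attention you give to coherence at successor stages is a sound elaboration of a point the paper leaves implicit.
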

\begin{proof}
We build the chain of initial segments $(M_i')_{i < \kappa}$ by induction. For the base case we note that we can write $M_0$ as a $\kappa$-directed colimit of $\kappa$-presentable models (using \thref{prop:kappa-aecat-and-above}). As $\dom(c)$ is $\kappa$-presentable, $c$ factors through some $M_0'$ in this diagram. The successor step is similar, using that $M_i'$ and $\dom(a_i)$ are both $\kappa$-presentable and must thus factor through some $\kappa$-presentable $M_{i+1}' \leq M_{i+1}$. In the limit step we just take the colimit $M_\ell' = \colim_{i < \ell} M_i'$ and the universal property then yields an arrow $M_\ell' \to M_\ell$.
\end{proof}

\section{Lascar strong Galois types}
\label{sec:lascar-strong-galois-types}
In this section we will give a definition of Lascar strong Galois type. In the first-order setting this will coincide with Lascar strong types, see \thref{rem:lgtp-vs-gtp}. This notion will be useful later in the property \textsc{Independence Theorem} for independence relations, see \thref{def:independence-relation-advanced-properties}.

To place our definition in context, we recall a possible definition for Lascar strong types in first-order logic. Working in a monster model, tuples $a$ and $a'$ have the same Lascar strong type over $B$ if there are $a = a_0, \ldots, a_n = a'$ and models $M_1, \ldots, M_n$, each containing $B$, such that $\tp(a_i / M_{i+1}) = \tp(a_{i+1} / M_{i+1})$ for all $0 \leq i < n$.
\begin{definition}
\thlabel{def:lascar-strong-galois-type}
Let $(\C, \M)$ be an AECat with AP and fix some $((b_j)_{j \in J}; M)$. We write $((a_i)_{i \in I} / (b_j)_{j \in J}; M) \sim_{\Lgtp} ((a_i')_{i \in I} / (b_j)_{j \in J}; M)$ if there is some extension $M \to N$ and some $m_0: M_0 \to N$, where $M_0$ is a model, such that $b_j$ factors through $m_0$ for all $j \in J$ and $\gtp((a_i)_{i \in I}, m_0; N) = \gtp((a_i')_{i \in I}, m_0; N)$.

We write
\[
\Lgtp((a_i)_{i \in I} / (b_j)_{j \in J}; M) = \Lgtp((a_i')_{i \in I} / (b_j)_{j \in J}; M)
\]
for the transitive closure of $\sim_{\Lgtp}$ and we say that $((a_i)_{i \in I}; M)$ and $((a_i')_{i \in I}; M)$ have the same \emph{Lascar strong Galois type} over $(b_j)_{j \in J}$.
\end{definition}
\begin{remark}
\thlabel{rem:lgtp-vs-gtp}
By definition having the same Lascar strong Galois type is the same as having the same Lascar strong type in an AECat based on a first-order theory. That is, they coincide in any AECat of the form $(\SubMod(T), \Mod(T))$ or $(\Mod(T), \Mod(T))$ for some first-order theory $T$. We get a similar statement for continuous logic, because the same standard proofs and definitions go through.

In positive logic the situation is more subtle, but in a broad class of reasonable positive theories Lascar strong types and Lascar strong Galois types coincide. See \thref{fact:finitely-lambda-saturated-gives-lascar-type} and the surrounding discussion for more details.
\end{remark}
Classically multiple equivalent definitions are possible for Lascar strong types. We recall these and prove similar conditions for Lascar strong Galois types in Section \ref{sec:lgtp-vs-bounded-gtp}. Usually these proofs require compactness, but interestingly this can be replaced by the use of a nice enough independence relation.

For ease of notation the following proposition is formulated for single arrows, but everything goes through word for word if we replace those by tuples of arrows.
\begin{proposition}
\thlabel{prop:lascar-strong-type-invariant-under-galois-type}
Suppose that $\gtp(a_1, a_2, b; M) = \gtp(a_1', a_2', b'; M')$. Then we have $\Lgtp(a_1 / b; M) = \Lgtp(a_2 / b; M)$ iff $\Lgtp(a_1' / b'; M') = \Lgtp(a_2' / b'; M')$.
\end{proposition}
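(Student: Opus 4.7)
By symmetry it suffices to prove one direction, so I assume $\Lgtp(a_1/b;M) = \Lgtp(a_2/b;M)$, witnessed by a chain $a_1 = c_0, c_1, \ldots, c_n = a_2$ of arrows into $M$ with consecutive pairs related by $\sim_{\Lgtp}$ over $b$. The plan is to transport this chain to the primed side via the Galois-type equality of the endpoints, then piece it back together using the extension property.

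The first ingredient is a one-step invariance lemma: if $\gtp(\tilde a_1, \tilde a_2, \tilde b; \tilde M) = \gtp(\bar a_1, \bar a_2, \bar b; \bar M)$ and $(\tilde a_1, \tilde b; \tilde M) \sim_{\Lgtp} (\tilde a_2, \tilde b; \tilde M)$ is witnessed by an extension $\tilde M \to \tilde N$ and an arrow $m_0 \colon M_0 \to \tilde N$, then the analogous relation holds on the barred side. The proof is a double amalgamation: take a common extension $\tilde M \to N_0 \leftarrow \bar M$ matching the two triples, amalgamate this with $\tilde N$ over $\tilde M$ using AP to obtain a common target $\hat N$, and push $m_0$ along $\tilde N \to \hat N$. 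The factorisation of $\bar b$ through the pushed $m_0$, and the equality of Galois types of $\bar a_1$ and $\bar a_2$ over this pushed $m_0$ in $\hat N$, both fall out of the identifications imposed by the common extension.

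Next, I apply Fact 2.15(iii) iteratively $n-1$ times, starting from the given Galois-type equality $\gtp(a_1, a_2, b; M) = \gtp(a_1', a_2', b'; M')$, to construct a single extension $M' \to N'$ together with arrows $c_1', \ldots, c_{n-1}'$ into $N'$ such that, setting $c_0' = a_1'$ and $c_n' = a_2'$ (viewed in $N'$ via Convention 2.14), the full tuple satisfies $\gtp(c_0, \ldots, c_n, b; M) = \gtp(c_0', \ldots, c_n', b'; N')$. Restricting to each consecutive triple and applying the one-step lemma yields $(c_i', b'; N') \sim_{\Lgtp} (c_{i+1}', b'; N')$ for every $i$, which is a chain witnessing $\Lgtp(a_1'/b'; N') = \Lgtp(a_2'/b'; N')$.

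The main obstacle I anticipate is the last move, descending from the extension $N'$ to the declared ambient $M'$, since the intermediate arrows $c_i'$ produced by the extension property need not factor through $M'$. I expect to resolve this with a small auxiliary observation in the spirit of the one-step amalgamation argument: any $\sim_{\Lgtp}$-witness in $N'$ between arrows that already factor through $M'$ restricts unchanged to a witness in $M'$, and more generally $\sim_{\Lgtp}$-chains behave well under enlarging the ambient to an extension. Once this compatibility is pinned down (and it is implicit under Convention 2.14), the chain constructed in $N'$ transfers back to a chain demonstrating $\Lgtp(a_1'/b'; M') = \Lgtp(a_2'/b'; M')$, completing the proof.
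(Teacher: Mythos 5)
Your one-step lemma and its proof are correct and match the paper's own argument: the paper composes the $\sim_{\Lgtp}$-witness $M \to N$ with an amalgamation $N \to N' \leftarrow M'$ coming from the Galois-type equality and then observes that $m_0$, pushed along $N \to N'$, does the job; your version amalgamates the Galois-type witness with $\tilde N$ over $\tilde M$ by AP and pushes $m_0$ through, which is the same computation in a slightly different order. Where you and the paper differ is that the paper compresses the reduction from $\Lgtp$ to $\sim_{\Lgtp}$ into the phrase ``it suffices to prove,'' whereas you try to make it explicit by lifting the whole chain via \thref{fact:galois-types}(iii).

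The descent step you flag as the main obstacle is a genuine gap, and the fix you sketch does not close it. Your observation that a $\sim_{\Lgtp}$-witness in $N'$ restricts to one in $M'$ when the two arrows already factor through $M'$ is correct but applies only to the endpoints $a_1', a_2'$. The intermediate arrows $c_1', \ldots, c_{n-1}'$ produced by \thref{fact:galois-types}(iii) live in $N'$, and nothing forces them to factor through $M'$. Since $\Lgtp(\cdot/b'; M')$ is by definition the transitive closure of $\sim_{\Lgtp}$ among arrows into the fixed ambient $M'$, a chain whose interior lives only in $N' \supsetneq M'$ does not by itself give a chain in $M'$, and \thref{conv:extending-monomorphisms} (which only lets you view an arrow into $M'$ as an arrow into $N'$, not conversely) does not help. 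Your second observation, that chains ``behave well under enlarging the ambient,'' is the easy forward direction; the descent needs the reverse direction, which is exactly what is unproved. As written, your argument establishes $\Lgtp(a_1'/b'; N') = \Lgtp(a_2'/b'; N')$, not the stated conclusion over $M'$. To close the gap you would need either a proof that $\Lgtp$-equivalence between arrows factoring through $M'$ is reflected (not merely preserved) by the extension $M' \to N'$, or an argument that avoids transporting the whole chain to a single extension --- which is the point the paper's terse ``it suffices'' silently relies on.
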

\begin{proof}
It suffices to prove that $(a_1 / b; M) \sim_{\Lgtp} (a_2 / b; M)$ implies $(a_1' / b'; M') \sim_{\Lgtp} (a_2' / b'; M')$. Let $M \to N$ with $m_0: M_0 \to N$ witness $(a_1 / b; M) \sim_{\Lgtp} (a_2 / b; M)$. So we have that $b = m_0 b^*$ for some $b^*: B \to M_0$, and $\gtp(a_1, m_0; N) = \gtp(a_2, m_0; N)$. Let $N \to N' \leftarrow M'$ witness $\gtp(a_1, a_2, b; N) = \gtp(a_1, a_2, b; M) = \gtp(a_1', a_2', b'; M')$. Then we get the following commuting diagram:
\[
\begin{tikzcd}[row sep=small]
                                       & N'                                             &                            \\
N \arrow[ru]                           &                                                & M' \arrow[lu]              \\
M \arrow[u]                            & M_0 \arrow[lu, "m_0"']                         &                            \\
A_1 \arrow[u] \arrow[rruu, bend right] & B \arrow[lu] \arrow[u] \arrow[ruu, bend right] & A_2 \arrow[llu] \arrow[uu]
\end{tikzcd}
\]
So we have $m_0: M_0 \to N'$ and $b'$ factors though $m_0$. This follows from the fact that the above diagram commutes, so $b': B \to M' \to N'$ and $B \to M_0 \to N \to N'$ are the same arrow. Furthermore, we have that $\gtp(a_1, m_0; N') = \gtp(a_2, m_0; N')$ and because $a_1$ and $a_2$ are the same arrows into $N'$ as $a_1'$ and $a_2'$ respectively, we get $\gtp(a_1', m_0; N') = \gtp(a_2', m_0; N')$. So we conclude $(a_1' / b'; M') \sim_{\Lgtp} (a_2' / b'; M')$, as required.
\end{proof}
Lascar strong Galois types induce a bounded equivalence relation on arrows. We again give a proof for single arrows, which also works for tuples of arrows.
\begin{proposition}
\thlabel{prop:lascar-strong-galois-types-bounded}
Given objects $A$ and $B$ there is $\lambda$ such that for any $b: B \to M$ the relation of having the same Lascar strong Galois type over $b$ partitions $\Hom(A, M)$ into at most $\lambda$ many equivalence classes.
\end{proposition}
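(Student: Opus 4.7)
The plan is to bound the number of Lascar strong Galois type classes on $\Hom(A, M)$ over $b$ by the number of Galois types over a small, sufficiently presentable submodel through which $b$ factors, and then take a supremum over the (boundedly many) iso classes of such submodels.

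First, using \thref{fact:kappa-aecat} I would choose $\kappa$ large enough that $(\C, \M)$ is a $\kappa$-AECat and $B$ is $\kappa$-presentable in $\C$. Given any $b: B \to M$, I would factor $b$ through some $\kappa$-presentable model, obtaining $m_0 : M_0 \to M$ with $b = m_0 b^*$. For this I would use \thref{prop:kappa-aecat-and-above} to write $M$ as a $\kappa$-directed colimit in $\M$ of $\kappa$-presentable models, combined with the fact that the inclusion $\M \hookrightarrow \C$ preserves directed colimits, so that the $\kappa$-presentability of $B$ in $\C$ yields the desired factorization.

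The key observation is the following: if two arrows $a_1, a_2: A \to M$ satisfy $\gtp(a_1, m_0; M) = \gtp(a_2, m_0; M)$, then \thref{def:lascar-strong-galois-type} is satisfied directly by taking $N = M$ and the given $m_0$ (the hypothesis that $b$ factors through $m_0$ is exactly the factorization from the previous step). So $(a_1 / b; M) \sim_{\Lgtp} (a_2/b; M)$, and in particular $a_1$ and $a_2$ have the same Lascar strong Galois type over $b$. Hence the LSGT equivalence on $\Hom(A, M)$ is coarser than ``having the same Galois type over $m_0$'', and the number of LSGT classes is bounded by $|\Sgtp((A, M_0))|$, which is a set by \thref{fact:galois-type-set}.

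To get a bound $\lambda$ depending only on $A$ and $B$, I would set $\lambda = \sup_{[M_0]} |\Sgtp((A, M_0))|$, where the supremum ranges over iso classes $[M_0]$ of $\kappa$-presentable models of $\M$. Since $\kappa$-presentable objects in an accessible category form an essentially small subcategory, this is a set-indexed supremum of cardinals, hence a cardinal. The main subtle point is the very first factorization step: one must ensure that $b$ factors through a model (not merely an object of $\C$) which is $\kappa$-presentable, and the compatibility between $\kappa$-presentability in $\C$ and in $\M$ provided by \thref{prop:kappa-aecat-and-above} is exactly what makes this work. Beyond that, the argument is a direct unpacking of the definition of $\sim_{\Lgtp}$.
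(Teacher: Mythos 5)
Your proof is correct, and it takes a slightly different route from the paper's. The paper's proof first shows that for any fixed $b: B \to M$, the number $\lambda_b := |\Sgtp(A, M)|$ bounds the number of Lascar strong Galois type classes over any $b'$ with $\gtp(b'; M') = \gtp(b; M)$ — using \thref{prop:lascar-strong-type-invariant-under-galois-type} to transfer along a common extension — and then takes the supremum of $\lambda_b$ over representatives of $\Sgtp(B)$, a set by \thref{fact:galois-type-set}. You instead factor $b$ through a $\kappa$-presentable model $M_0$, observe that equality of Galois types over $m_0$ directly witnesses $\sim_{\Lgtp}$ (taking $N = M$ in \thref{def:lascar-strong-galois-type}), and bound by $|\Sgtp((A, M_0))|$ before taking a supremum over iso-classes of $\kappa$-presentable models in $\M$. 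Both are legitimate ways to index the supremum by a set: the paper uses $\Sgtp(B)$ being a set, you use essential smallness of the $\kappa$-presentable models. Your approach avoids \thref{prop:lascar-strong-type-invariant-under-galois-type} and the common-extension bookkeeping, at the cost of explicitly invoking the accessibility machinery (\thref{fact:kappa-aecat}, \thref{prop:kappa-aecat-and-above}) to get the small submodel factorization; the paper's approach hides that machinery inside the already-established \thref{fact:galois-type-set}. Either way the key observation — that ``same Galois type over a model through which $b$ factors'' refines ``same Lascar strong Galois type over $b$'' — is the same, and your version is if anything slightly more direct since it appeals to the definition of $\sim_{\Lgtp}$ rather than routing through a general invariance lemma.
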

\begin{proof}
We will first prove the following claim: for any $b: B \to M$ there is $\lambda_b$ such that for any $b': B \to M'$ with $\gtp(b'; M') = \gtp(b; M)$ there are at most $\lambda_b$ many equivalence classes of Lascar strong Galois types over $b'$ in $\Hom(A, M')$. By \thref{fact:galois-type-set} the collection $\Sgtp(A, M)$ is a set. We pick $\lambda_b = |\Sgtp(A, M)|$. Now let $M \to N \leftarrow M'$ witness $\gtp(b'; M') = \gtp(b; M)$. For any two arrows $a, a': A \to M'$ we have that $\gtp(a, m; N) = \gtp(a', m; N)$ implies that $\Lgtp(a / b'; M') = \Lgtp(a' / b'; M')$, by definition of $\Lgtp$. The claim then follows by choice of $\lambda_b$.

By the claim we can take $\lambda$ to be the supremum of $\lambda_b$, where $b$ ranges over the representatives of the Galois types in $\Sgtp(B)$.
\end{proof}
\begin{proposition}
\thlabel{prop:lascar-strong-galois-types-basic-observations}
If $\Lgtp((a_i)_{i \in I} / b; M) = \Lgtp((a_i')_{i \in I} / b; M)$ then:
\begin{enumerate}[label=(\roman*)]
\item (restriction) we have $\Lgtp((a_i)_{i \in I_0} / b; M) = \Lgtp((a_i')_{i \in I_0} / b; M)$ for any $I_0 \subseteq I$;
\item (monotonicity) given an arrow $c_i: C_i \to \dom(a_i)$ for each $i \in I$, then
\[
\Lgtp((a_i)_{i \in I}, (a_i c_i)_{i \in I} / b; M) = \Lgtp((a_i')_{i \in I}, (a_i' c_i)_{i \in I} / b; M)
\]
and thus $\Lgtp((a_i c_i)_{i \in I} / b; M) = \Lgtp((a_i' c_i)_{i \in I} / b; M)$;
\item (extension) for any $(c; M)$ there is an extension $M \to N$ and some $(c'; N)$ such that $\Lgtp(c, (a_i)_{i \in I} / b; N) = \Lgtp(c', (a_i')_{i \in I} / b; N)$.
\end{enumerate}
\end{proposition}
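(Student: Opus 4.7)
The plan is to verify each property first for the single-step relation $\sim_{\Lgtp}$ and then lift to its transitive closure $\Lgtp$. Properties (i) and (ii) will pass through almost for free from the corresponding properties of Galois types, while (iii) will require an inductive argument along a defining chain.

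For (i) restriction and (ii) monotonicity, suppose $M \to N$ and $m_0 : M_0 \to N$ witness a step $((a_i)_{i \in I}/b; M) \sim_{\Lgtp} ((a_i')_{i \in I}/b; M)$, so that $\gtp((a_i)_{i \in I}, m_0; N) = \gtp((a_i')_{i \in I}, m_0; N)$. Applying parts (i) and (ii) of \thref{fact:galois-types} to this Galois-type equation shows that the same data $M \to N$, $m_0$ also witnesses the $\sim_{\Lgtp}$ step for the restricted tuple, respectively for the extended tuple $((a_i, a_i c_i)_{i \in I})$ vs.\ $((a_i', a_i' c_i)_{i \in I})$; note that $b$ still factors through $m_0$. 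Chaining these witness-by-witness along the transitive closure yields (i) and (ii) for $\Lgtp$.

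For (iii) extension, I would induct on the length of a chain $a = a^0, a^1, \ldots, a^n = a'$ realising $\Lgtp(a/b; M) = \Lgtp(a'/b; M)$, where each step is witnessed by $M \to N^k$, $m_0^k : M_0^k \to N^k$ with $b$ factoring through $m_0^k$ and $\gtp(a^k, m_0^k; N^k) = \gtp(a^{k+1}, m_0^k; N^k)$. Inductively I construct extensions $M \to P^k$ and arrows $c^k : C \to P^k$, starting with $P^0 = M$ and $c^0 = c$, maintaining the invariant $\Lgtp(c, a/b; P^k) = \Lgtp(c^k, a^k/b; P^k)$ (where $a$ is viewed in $P^k$ via the given extension). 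To pass from $k$ to $k+1$, amalgamate $P^k$ and $N^k$ over $M$ into a common extension $Q^k$; then apply the extension clause of \thref{fact:galois-types}(iii) to the equality $\gtp(a^k, m_0^k; Q^k) = \gtp(a^{k+1}, m_0^k; Q^k)$ with the additional arrow $c^k : C \to Q^k$, obtaining an extension $Q^k \to P^{k+1}$ and $c^{k+1} : C \to P^{k+1}$ with
\[
\gtp(c^k, a^k, m_0^k; P^{k+1}) = \gtp(c^{k+1}, a^{k+1}, m_0^k; P^{k+1}).
\]
Since $b$ factors through $m_0^k$, this exhibits a $\sim_{\Lgtp}$ step between $(c^k, a^k/b; P^{k+1})$ and $(c^{k+1}, a^{k+1}/b; P^{k+1})$, which combined with the inductive hypothesis preserves the invariant. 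After $n$ steps, setting $N = P^n$ and $c' = c^n$ gives the required extension.

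The only real obstacle is the bookkeeping in the inductive step of (iii): one must amalgamate the current model $P^k$ with the fresh witness $N^k$ over $M$ before invoking the extension property of Galois types, so that the running arrow $c^k$ and the new type-equality witness $m_0^k$ simultaneously live in a common extension. Once this is arranged, \thref{fact:galois-types}(iii) does all the work.
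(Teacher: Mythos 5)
Your proposal is correct and takes exactly the approach the paper intends: the paper's proof is a one-line instruction to unwind the definition of $\Lgtp$ into chains of $\sim_{\Lgtp}$ steps, reduce each step to an equality of Galois types, and apply \thref{fact:galois-types}, which is precisely what you carry out (with the added care in (iii) of amalgamating the current extension with the next witness before invoking the extension clause, which is the right bookkeeping move).
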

\begin{proof}
This is essentially the same \thref{fact:galois-types}, but then for Lascar strong Galois types. To prove it, apply the definition of Lascar strong Galois types to reduce to some equality of Galois types and then apply \thref{fact:galois-types}.
\end{proof}
\section{Independence relations}
\label{sec:independence-relations}
Similar to \cite[Section 6]{kamsma_kim-pillay_2020} we define an independence relation in an AECat as a ternary relation on subobjects of models. However, there will be some slight differences in our terminology, see after \thref{def:independence-relation-types}.

We write $\Sub(X)$ for the poset of subobjects of object $X$. If $A \leq B$ for $A, B \in \Sub(X)$ we may also consider $A$ to be a subobject of $B$, that is $A \in \Sub(B)$. On the other hand, we always have $X \in \Sub(X)$ as the maximal element. So we will use the notation $A \leq X$ to mean that $A$ is a subobject of $X$.
\begin{convention}
\thlabel{conv:subobjects}
We extend \thref{conv:extending-monomorphisms} to subobjects: given an extension $M \to N$ and a subobject $A \leq M$, we will view $A$ as a subobject of $N$.
\end{convention}
\begin{definition}
\thlabel{def:independence-relation}
In an AECat with AP, an \emph{independence relation} is a relation on triples of subobjects of models. If such a triple $(A, B, C)$ of subobjects of a model $M$ is in the relation, we call it \emph{independent} and write:
\[
A \ind_C^M B.
\]
This notation should be read as ``$A$ is independent from $B$ over $C$ (in $M$)''.

We also allow each of the subobjects in the notation to be replaced by an arrow representing them. For example, if $a$ is an arrow representing the subobject $A$ then $a \ind_C^M B$ means $A \ind_C^M B$.
\end{definition}
We may want to restrict the objects that can appear in the base of the independence relation.
\begin{definition}
\thlabel{def:independence-base-class}
Let $(\C, \M)$ be an AECat with AP and let $\B$ be a collection of objects in $\C$, closed under isomorphic objects, with $\M \subseteq \B$. Then we call $\B$ a \emph{base class}. An independence relation $\ind$ is called an \emph{independence relation over $\B$} if it only allows subobjects with their domain in $\B$ in the base. That is, $A \ind_C^M B$ implies that the domain of $C$ is in $\B$. We will also say that $\B$ is \emph{the base class of $\ind$}, written as $\B = \base(\ind)$.
\end{definition}
\begin{convention}
\thlabel{conv:subobject-in-base-class}
For a base class $\B$ and some subobject $C \leq M$ we will also write $C \in \B$ to mean that the domain of $C$ is in $\B$, and similarly for $C \not \in \B$.
\end{convention}
\begin{definition}
\thlabel{def:basic-independence-relation}
We call an independence relation $\ind$ a \emph{basic independence relation} if it satisfies the following properties.
\begin{description}
\item[\textsc{Invariance}] $a \ind_c^M b$ and $\gtp(a, b, c; M) = \gtp(a', b', c'; M')$ implies $a' \ind_{c'}^{M'} b'$.

\item[\textsc{Monotonicity}] $A \ind_C^M B$ and $A' \leq A$ implies $A' \ind_C^M B$.

\item[\textsc{Transitivity}] $A \ind_B^M C$ and $A \ind_C^M D$ with $B \leq C$ implies $A \ind_B^M D$.

\item[\textsc{Symmetry}] $A \ind_C^M B$ implies $B \ind_C^M A$.

\item[\textsc{Existence}] $A \ind_C^M C$ for all $C \in \base(\ind)$.

\item[\textsc{Extension}] If $a \ind_c^M b$ and $(b'; M)$ is such that $b$ factors through $b'$ then there is an extension $M \to N$ with some $(a'; N)$ such that $\gtp(a', b, c; N) = \gtp(a, b, c; M)$ and $a' \ind_c^N b'$.

\item[\textsc{Union}] Let $(B_i)_{i \in I}$ be a directed system with a cocone into some model $M$, and suppose $B = \colim_{i \in I} B_i$ exists. Then if $A \ind_C^M B_i$ for all $i \in I$, we have $A \ind_C^M B$.
\end{description}
\end{definition}
Before we define some additional properties for independence relations, we first need to translate the notion of a club set to categorical language.
\begin{definition}
\thlabel{def:aecat-poset-of-submodels}
Let $(\C, \M)$ be an AECat. For a model $M$ and a regular cardinal $\kappa$ we write $\Sub^\kappa_\M(M)$ for the poset of $\kappa$-presentable subobjects of $M$ in $\M$.
\end{definition}
Note that if we are given a chain $(M_i)_{i < \theta}$ in $\Sub^\kappa_\M(M)$ with $\theta < \kappa$ then its join in $\Sub^\kappa_\M(M)$ exists and is given by $\colim_{i < \theta} M_i$. This is the reason why we restrict ourselves to $\M$, because there we have directed colimits. If $\C$ has directed colimits as well then all these definitions would make sense for $\C$ as well.
\begin{definition}
\thlabel{def:categorical-club-set}
Let $\F \subseteq \Sub^\kappa_\M(M)$ be a nonempty set.
\begin{enumerate}[label=(\roman*)]
\item We call $\F$ \emph{unbounded} if for every $M_0 \in \Sub^\kappa_\M(M)$ there is $M_1 \in \F$ such that $M_0 \leq M_1$.
\item We call $\F$ \emph{closed} if for any chain $(M_i)_{i < \theta}$ in $\F$ with $\theta < \kappa$ its join $\colim_{i < \theta} M_i$ is again in $\F$.
\item We call $\F$ a \emph{club set} if it is closed and unbounded.
\end{enumerate}
\end{definition}
\begin{fact}
\thlabel{fact:categorical-club-sets}
The following two facts are standard.
\begin{enumerate}[label=(\roman*)]
\item The intersection of two club sets on $\Sub^\kappa_\M(M)$ is again a club set.
\item If $M = \colim_{i < \kappa} M_i$, where $(M_i)_{i < \kappa}$ is a continuous chain of $\kappa$-presentable models, then $\{M_i : i < \kappa\}$ is a club set on $\Sub^\kappa_\M(M)$.
\end{enumerate}
\end{fact}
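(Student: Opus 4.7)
The plan is to prove the two parts separately, starting with (ii) since it clarifies the shape of the colimits involved. For (ii), I first verify that each $M_i$ genuinely belongs to $\Sub^\kappa_\M(M)$: each $M_i$ is a $\kappa$-presentable model by hypothesis, and the cocone arrow $M_i \to M$ is a monomorphism since every arrow in $\C$ is, hence gives a subobject. For closure, take any sub-chain $(M_{i_j})_{j<\theta}$ with $\theta < \kappa$ and let $\ell = \sup_{j<\theta} i_j$; regularity of $\kappa$ gives $\ell \leq \kappa$, and if $\ell = \kappa$ then $\theta = \kappa$ by regularity, contradiction, so $\ell < \kappa$. Continuity of $(M_i)_{i<\kappa}$ at $\ell$ combined with cofinality of $(i_j)_{j<\theta}$ in $\ell$ yields $\colim_{j<\theta} M_{i_j} = M_\ell$, which lies in the set. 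For unboundedness, the crucial observation is that since $\kappa$ is regular, the chain $(M_i)_{i<\kappa}$ is $\kappa$-directed as a diagram (any fewer than $\kappa$ indices have their supremum as an upper bound). Hence $M = \colim_{i<\kappa} M_i$ is a $\kappa$-directed colimit, so any $\kappa$-presentable $M_0 \leq M$ has its inclusion factor through some $M_i$; the resulting arrow $M_0 \to M_i$ is a monomorphism because its composite with $M_i \to M$ is, giving $M_0 \leq M_i$ as subobjects of $M$.

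For (i), closure of $\F_1 \cap \F_2$ is immediate from closure of each factor, since colimits of sub-chains are computed the same way in $\Sub^\kappa_\M(M)$. For unboundedness, given $M_0 \in \Sub^\kappa_\M(M)$, carry out the standard back-and-forth: using unboundedness of $\F_1$ pick $N_0 \in \F_1$ with $M_0 \leq N_0$; then using unboundedness of $\F_2$ pick $N_0' \in \F_2$ with $N_0 \leq N_0'$; iterate to obtain an interleaved $\omega$-chain
\[
M_0 \leq N_0 \leq N_0' \leq N_1 \leq N_1' \leq \cdots
\]
with $N_n \in \F_1$ and $N_n' \in \F_2$. Since $\omega < \kappa$ (the interesting case; if $\kappa = \omega$ the closure condition is vacuous on infinite chains and the statement is vacuously true of chains of length $< \omega$), the colimit $N$ of this chain exists in $\Sub^\kappa_\M(M)$, and applying closure of $\F_1$ to the cofinal sub-chain $(N_n)_{n<\omega}$ and closure of $\F_2$ to $(N_n')_{n<\omega}$ shows $N \in \F_1 \cap \F_2$ with $M_0 \leq N$.

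The main obstacle, such as there is one, is purely bookkeeping: ensuring that every supremum we form stays inside $\Sub^\kappa_\M(M)$ and respects the closure clauses of the clubs. This reduces to two cardinal arithmetic observations, both flowing from regularity of $\kappa$: that a chain of length $<\kappa$ of ordinals $<\kappa$ has supremum $<\kappa$, and that a chain of length $\kappa$ is $\kappa$-directed as a diagram. Once these are in place, both parts are essentially transcriptions of the classical set-theoretic arguments for clubs on $\kappa$, adapted by replacing unions with colimits in $\Sub^\kappa_\M(M)$.
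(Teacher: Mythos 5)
Your proof is correct and takes essentially the same route as the paper: for (i) the paper simply cites the standard Jech argument transcribed to the poset $\Sub^\kappa_\M(M)$, which is exactly the interleaved $\omega$-chain you construct, and for (ii) the paper's one-line remark that the chain is $\kappa$-directed (hence unbounded) and that continuity is precisely closedness is what you spell out in detail.
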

\begin{proof}
Fact (i) is standard, see for example \cite[Theorem 8.2]{jech_set_2003}. We just apply the argument to the poset $\Sub^\kappa_\M(M)$ instead of to a cardinal considered as a poset. Fact (ii) is just unfolding definitions. The chain $(M_i)_{i < \kappa}$ is unbounded  because it is $\kappa$-directed, so any $\kappa$-presentable $M' \leq M$ will factor through the chain, and continuity is precisely saying that the chain is a closed set.
\end{proof}
\begin{definition}
\thlabel{def:independence-relation-advanced-properties}
We also define the following properties for an independence relation.
\begin{description}
\item[\textsc{Base-Monotonicity}] $A \ind_C^M B$ and $C \leq C' \leq B$ with $C' \in \base(\ind)$ implies $A \ind_{C'}^M B$.

\item[\textsc{Club Local Character}] For every regular cardinal $\lambda$ there is a regular cardinal $\Upsilon(\lambda)$ such that the following holds for all regular $\kappa \geq \Upsilon(\lambda)$. Let $A, M \leq N$, with $A$ $\lambda$-presentable and $M$ a model. Then there is a club set $\F \subseteq \Sub^\kappa_\M(M)$ such that for all $M_0 \in \F$ we have $A \ind_{M_0}^N M$.

\item[\textsc{Stationarity}] If $\gtp(a, m; N) = \gtp(a', m; N)$, where the domain of $m$ is a model, then $a \ind_m^N b$ and $a' \ind_m^N b$ implies $\gtp(a, m, b; N) = \gtp(a', m, b; N)$.

\item[\textsc{Independence Theorem}] Suppose we have $a \ind_c^M b$, $a' \ind_c^M b'$, $b \ind_c^M b'$ and also $\Lgtp(a / c; M) = \Lgtp(a' / c; M)$. Then there is an extension $M \to N$ with $(a^*; N)$ such that $\Lgtp(a^*, b / c; N) = \Lgtp(a, b / c; N)$, $\Lgtp(a^*, b' / c; N) = \Lgtp(a', b' / c; N)$ and  $a^* \ind_c^N M$.
\end{description}
\end{definition}
\begin{definition}
\thlabel{def:independence-relation-types}
Let $\ind$ be a basic independence relation. We call $\ind$ ...
\begin{itemize}
\item ... a \emph{stable independence relation} if it also satisfies \textsc{Base-Monotonicity}, \textsc{Club Local Character}, \textsc{Stationarity} and \textsc{Independence Theorem}.
\item ... a \emph{simple independence relation} if it also satisfies \textsc{Base-Monotonicity}, \textsc{Club Local Character} and \textsc{Independence Theorem}.
\item ... an \emph{NSOP$_1$-like independence relation} if it also satisfies \textsc{Club Local Character} and \textsc{Independence Theorem};
\end{itemize}
\end{definition}
We briefly compare the terminology we use here to the terminology in \cite{kamsma_kim-pillay_2020}. The notion of a base class is entirely new, so properties concerning the subobject in the base have been adjusted accordingly.

The \textsc{Existence} and \textsc{Transitivity} properties are different now, and we have added \textsc{Extension}. However, the formulation of existence and transitivity (together with invariance and monotonicity) in \cite{kamsma_kim-pillay_2020} implies our new formulation, see \cite[Proposition 6.11]{kamsma_kim-pillay_2020}. The converse is also true: our new formulation of \textsc{Existence}, \textsc{Transitivity} and \textsc{Extension} (together with invariance) implies existence and transitivity there. So ultimately the two approaches are equivalent.
\begin{remark}
\thlabel{rem:local-character}
In a more traditional definition of local character, such as in simple theories, one would just require that for $A, M \leq N$ as above there is some $\Upsilon(\lambda)$-presentable $M_0 \leq M$ such that $A \ind_{M_0}^N M$. This is (almost) the definition that was used in \cite{kamsma_kim-pillay_2020}, where we also have access to \textsc{Base-Monotonicity}. We then get \textsc{Club Local Character} by considering the club set $\F = \{ M_0 \in \Sub^\kappa_\M(M) : M_0 \leq M \}$. In NSOP$_1$-like settings we do generally not have \textsc{Base-Monotonicity}. So \textsc{Club Local Character} then still gives us a good amount of \textsc{Base-Monotonicity}, namely on a club set. These ideas are due to \cite{kaplan_local_2019}.
\end{remark}
\begin{proposition}[Strong extension]
\thlabel{prop:strong-extension}
Let $\ind$ be a basic independence relation and suppose that $a \ind_c^M b$. Then for any $(d; M)$ there is an extension $M \to N$ and $(d'; N)$ such that $\Lgtp(d'/b,c; N) = \Lgtp(d/b,c; N)$ and $a \ind_c^N d'$.
\end{proposition}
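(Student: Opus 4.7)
The plan is to reduce the claim to one application of the Lascar strong Galois type extension property (\thref{prop:lascar-strong-galois-types-basic-observations}(iii)), by first strengthening the Galois type equality produced by \textsc{Extension} to a Lascar strong Galois type equality. I would begin by applying \textsc{Extension} to $a \ind_c^M b$, taking the ambient model $M$ itself (as the identity subobject) as the extension of $b$: since $b$ factors through $\mathrm{id}_M$, this yields an extension $M \to N_1$ and $a' : A \to N_1$ with $\gtp(a', b, c; N_1) = \gtp(a, b, c; M)$ and $a' \ind_c^{N_1} M$. As $d$ factors through $M$, \textsc{Symmetry} and \textsc{Monotonicity} give $a' \ind_c^{N_1} d$.

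The key observation is that the Galois type equality above already witnesses a one-step $\sim_{\Lgtp}$-equivalence of $a$ and $a'$ over $b, c$. Unfolding it, we obtain a common extension $N_1 \to P \leftarrow M$ via arrows $\phi : N_1 \to P$ and $\psi : M \to P$ with $\phi \circ a' = \psi \circ a$ and with $b, c$ factoring through the model embedding $\psi$. Taking $m_0 = \psi$ as the Lascar model, the tuples $(\psi \circ a, \psi)$ and $(\phi \circ a', \psi)$ in $P$ literally coincide, so by definition of $\sim_{\Lgtp}$ we get $\Lgtp(a'/b, c; N_1) = \Lgtp(a/b, c; N_1)$.

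Applying \thref{prop:lascar-strong-galois-types-basic-observations}(iii) to this equality (with $a'$ on the left and $a$ on the right) and the arrow $d : D \to N_1$ then yields an extension $N_1 \to N$ and $d' : D \to N$ such that $\Lgtp(d, a'/b, c; N) = \Lgtp(d', a/b, c; N)$. Restricting to the $d$-component gives $\Lgtp(d'/b, c; N) = \Lgtp(d/b, c; N)$, which is the first conclusion. The underlying Galois type equality $\gtp(d, a', b, c; N) = \gtp(d', a, b, c; N)$ restricts and reorders to $\gtp(a', c, d; N) = \gtp(a, c, d'; N)$; combined with $a' \ind_c^N d$ (inherited from step one via \textsc{Invariance} along $N_1 \to N$), a final application of \textsc{Invariance} delivers the independence $a \ind_c^N d'$.

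The main obstacle I anticipate is the second paragraph: recognising that the $\gtp$-equality produced by \textsc{Extension} automatically supplies a model (the image of $M$ under $\psi$, through which $b$ and $c$ factor) that is already a witness to a $\sim_{\Lgtp}$-step between $a$ and $a'$ over $b, c$. Once this upgrade is in hand, the rest is a direct application of $\Lgtp$-extension together with \textsc{Invariance}.
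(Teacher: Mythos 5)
Your key step in the second paragraph does not go through, and this is a genuine gap rather than a presentational slip.

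You claim that the Galois type equality $\gtp(a',b,c;N_1)=\gtp(a,b,c;M)$ produced by \textsc{Extension} already yields $\Lgtp(a'/b,c;N_1)=\Lgtp(a/b,c;N_1)$. Recall that $\sim_{\Lgtp}$ is computed relative to a \emph{single} extension $N_1\to N$: we need $m_0\colon M_0\to N$ with $b,c$ factoring through $m_0$ and $\gtp(a,m_0;N)=\gtp(a',m_0;N)$, where both $a$ and $a'$ are pushed into $N$ along the same map. In your setup you take $N=P$ and $N_1\to N=\phi$; then ``$a$ in $N$'' means $\phi\circ a=\phi\circ(M\to N_1)\circ a|_M$, not $\psi\circ a$. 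The witness to the $\gtp$-equality gives $\phi\circ a'=\psi\circ a$ but says nothing about the relationship between $\phi\circ(M\to N_1)$ and $\psi$, so equating $(\psi\circ a,\psi)$ with $(\phi\circ a',\psi)$ only establishes the trivial identity $\gtp(\psi a,\psi;P)=\gtp(\phi a',\psi;P)$, not the required $\gtp(\phi a,\psi;P)=\gtp(\phi a',\psi;P)$. Indeed $\phi\circ(M\to N_1)=\psi$ would force $a'=a$ in $N_1$ since all arrows are monomorphisms, so you cannot in general arrange the witness to make these agree. And the underlying claim is genuinely false: $\gtp(a',b,c)=\gtp(a,b,c)$ with $b,c$ not a model does not imply $\Lgtp(a'/b,c)=\Lgtp(a/b,c)$, exactly as ordinary type equality does not imply Lascar strong type equality in first-order logic, and nothing in a basic independence relation (without \textsc{3-amalgamation} or \textsc{Stationarity}) lets you choose the $a'$ supplied by \textsc{Extension} to land in the same Lascar class.

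The paper's proof avoids this precisely by never moving $a$. It applies \textsc{Extension} (via \textsc{Symmetry}) to move the \emph{model} side instead, producing a fresh $m'\colon M\to N_1$ with $a\ind_c^{N_1}m'$ and $\gtp(m',b,c;N_1)=\gtp(m,b,c;N_1)$; then a second application moves $N_1$ itself to produce $n_1'$ with $\gtp(n_1',m';N)=\gtp(n_1,m';N)$ and $a\ind_c^N n_1'$, and $d'$ is defined as $n_1'\circ d$. Because $b,c$ factor through $m'$ and $m'$ has a model as domain, the equality $\gtp(d',m';N)=\gtp(d,m';N)$ \emph{is} literally a $\sim_{\Lgtp}$-step — the Lascar model is there by construction. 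Your proposal would need to supply such a model witness before invoking $\Lgtp$-extension; note that \thref{cor:strong-extension-dual} does exactly the move you attempt, but legitimately, because it starts from the genuine $\Lgtp$-equality that \thref{prop:strong-extension} already established.
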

\begin{proof}
We first apply \textsc{Extension} to find $M \to N_1$ with $m': M \to N_1$ such that $a \ind_c^{N_1} m'$ and $\gtp(m', b, c; N_1) = \gtp(m, b, c; N_1)$. In particular this means that $b$ and $c$ factor through $m'$ by \thref{fact:galois-type-factorisation}. We apply \textsc{Extension} again to find an extension $n_1: N_1 \to N$ and $n_1': N_1 \to N$ with $a \ind_c^N n_1'$ and $\gtp(n_1', m'; N) = \gtp(n_1, m'; N)$. We define $d'$ to be the composition $D \xrightarrow{d} M \to N_1 \xrightarrow{n_1'} N$. By \textsc{Monotonicity} we then have $a \ind_c^N d'$. We also have $\gtp(d', m'; N) = \gtp(d, m'; N)$, so since $b$ and $c$ factor through $m'$, and $m'$ has a model as domain, we indeed get $\Lgtp(d'/b,c; N) = \Lgtp(d/b,c; N)$.
\end{proof}
\begin{corollary}
\thlabel{cor:strong-extension-dual}
Let $\ind$ be a basic independence relation and suppose that $a \ind_c^M b$. Then for any $(d; M)$ there is $M \to N$ and $(a'; N)$ such that $\Lgtp(a'/b,c; N) = \Lgtp(a/b,c; N)$ and $a' \ind_c^N d$.
\end{corollary}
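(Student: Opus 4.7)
The plan is to reduce the corollary to \thref{prop:strong-extension} via an inductive transport of the independence along the chain of Galois type equalities encoded in the Lascar strong Galois type equivalence. First I would apply \thref{prop:strong-extension} to $a \ind_c^M b$ with the arrow $d$, obtaining an extension $M \to N$ and some $(d'; N)$ such that $\Lgtp(d'/b,c; N) = \Lgtp(d/b,c; N)$ and $a \ind_c^N d'$. Unfolding the Lascar strong Galois type equality as the transitive closure of $\sim_{\Lgtp}$ produces a chain $d' = e_0 \sim_{\Lgtp} e_1 \sim_{\Lgtp} \cdots \sim_{\Lgtp} e_n = d$; each step is witnessed by an extension together with a model $M_i$ through which $b$ and $c$ factor, such that $(e_i, M_i)$ and $(e_{i+1}, M_i)$ have the same Galois type.

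I would then transport $a \ind_c^N d'$ step by step along this chain. Given inductively $a_i \ind_c^{N_i} e_i$ with $\Lgtp(a_i/b,c; N_i) = \Lgtp(a/b,c; N_i)$, the inductive step applies \thref{fact:galois-types}(iii) to extend the Galois type equality $\gtp(e_i, M_i; N_i) = \gtp(e_{i+1}, M_i; N_i)$ by the arrow $a_i$, producing an extension $N_i \to N_{i+1}$ and $(a_{i+1}; N_{i+1})$ with $\gtp(a_i, e_i, M_i; N_i) = \gtp(a_{i+1}, e_{i+1}, M_i; N_{i+1})$. Restricting via the factorization of $c$ through $M_i$ gives $\gtp(a_i, e_i, c; N_i) = \gtp(a_{i+1}, e_{i+1}, c; N_{i+1})$, so \textsc{Invariance} yields $a_{i+1} \ind_c^{N_{i+1}} e_{i+1}$; dropping the $e$-coordinates instead gives $\gtp(a_i, M_i; N_{i+1}) = \gtp(a_{i+1}, M_i; N_{i+1})$, which (since $b$ and $c$ factor through $M_i$) is a single $\sim_{\Lgtp}$-step, so $\Lgtp(a_{i+1}/b,c; N_{i+1}) = \Lgtp(a_i/b,c; N_{i+1}) = \Lgtp(a/b,c; N_{i+1})$ by the inductive hypothesis. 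After $n$ steps, setting $a' := a_n$ and $N' := N_n$ produces the required arrow.

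The main obstacle is the technical bookkeeping: the (possibly distinct) extensions underlying each $\sim_{\Lgtp}$ step must be consolidated into a single coherent tower $N \subseteq N_1 \subseteq \cdots \subseteq N_n$ via repeated amalgamation (using the amalgamation property of $\M$), and the factorizations of $b$ and $c$ through the successive witnessing models $M_i$ must remain compatible throughout. Everything else is a routine application of the properties already listed for a basic independence relation together with the formal behaviour of Galois types.
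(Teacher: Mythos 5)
Your proof is correct, and its first step coincides with the paper's: both apply strong extension (\thref{prop:strong-extension}) to $a \ind_c^M b$ with the arrow $d$ to produce $(d'; N)$ with $a \ind_c^N d'$ and $\Lgtp(d'/b,c;N) = \Lgtp(d/b,c;N)$. After that point the two arguments diverge. The paper invokes the extension property for Lascar strong Galois types (\thref{prop:lascar-strong-galois-types-basic-observations}(iii)) a single time to find an extension $N' \to N$ and $(a';N)$ with $\Lgtp(a',d/b,c;N) = \Lgtp(a,d'/b,c;N)$; restriction then gives $\Lgtp(a'/b,c;N) = \Lgtp(a/b,c;N)$, and since the Lascar equality in particular gives $\gtp(a',d,b,c;N) = \gtp(a,d',b,c;N)$, \textsc{Invariance} converts $a\ind_c^N d'$ into $a'\ind_c^N d$. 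You instead unwind $\Lgtp(d'/b,c) = \Lgtp(d/b,c)$ into a chain of $\sim_{\Lgtp}$-steps $d' = e_0 \sim_{\Lgtp} \cdots \sim_{\Lgtp} e_n = d$ and transport $a_i \ind_c e_i$ along the chain using \thref{fact:galois-types}(iii) and \textsc{Invariance}, reconstructing the Lascar equality of the transported $a_i$ one $\sim_{\Lgtp}$-step at a time. This works, and the amalgamation bookkeeping you flag is real but routine (Galois type equalities persist into amalgams over $N$). However, your induction is in effect reproving the specific instance of \thref{prop:lascar-strong-galois-types-basic-observations}(iii) that the paper simply cites; the paper has already packaged the ``transport along a chain of $\sim_{\Lgtp}$-steps'' into that extension property, so the chain-by-chain argument can be collapsed into a single application of it.
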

\begin{proof}
Apply \thref{prop:strong-extension} to find $M \to N'$ with $(d'; N')$ such that $a \ind_c^{N'} d'$ and $\Lgtp(d'/b,c; N') = \Lgtp(d/b,c; N')$. Then just pick $(a'; N)$ in an extension $N' \to N$ such that $\Lgtp(a',d/b,c; N) = \Lgtp(a,d'/b,c; N)$.
\end{proof}
\begin{convention}
\thlabel{conv:local-character-function}
We call the class function $\Upsilon$ for \textsc{Club Local Character} a \emph{local character function}. For an object $A$ we write $\Upsilon(A)$ for $\Upsilon(\lambda)$ where $\lambda$ is the least regular cardinal such that $A$ is $\lambda$-presentable.
\end{convention}
\begin{lemma}[Chain local character]
\thlabel{lem:chain-local-character}
Let $\ind$ be an independence relation satisfying \textsc{Club Local Character}. Let $A \leq N$ and $\kappa \geq \Upsilon(A)$. Suppose that we are given a continuous chain $(M_i)_{i < \kappa}$ of $\kappa$-presentable models with $M = \colim_{i < \kappa} M_i \leq N$. Then there is $i_0 < \kappa$ such that $A \ind_{M_{i_0}}^N M$.
\end{lemma}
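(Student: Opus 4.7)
The plan is to combine \textsc{Club Local Character} with the elementary facts about club sets recorded in \thref{fact:categorical-club-sets}. Concretely, the proof reduces to observing that the given continuous chain gives one club set on $\Sub^\kappa_\M(M)$, while \textsc{Club Local Character} produces another, and the two must meet.

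First I would unpack the hypothesis on $\kappa$. Let $\lambda^*$ be the least regular cardinal such that $A$ is $\lambda^*$-presentable, so that by \thref{conv:local-character-function} we have $\Upsilon(A) = \Upsilon(\lambda^*)$. Since $\kappa \geq \Upsilon(A) + \lambda \geq \Upsilon(\lambda^*)$, and since $\kappa \geq \lambda$ ensures $(\C, \M)$ is a $\kappa$-AECat (via \thref{prop:kappa-aecat-and-above}), I may apply \textsc{Club Local Character} with the pair $A, M \leq N$ and the cardinal $\kappa$. This yields a club set $\F \subseteq \Sub^\kappa_\M(M)$ such that
\[
A \ind_{M_0}^N M \quad \text{for every } M_0 \in \F.
\]

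Next I would produce a second club set from the given chain. By hypothesis $(M_i)_{i < \kappa}$ is a continuous chain of $\kappa$-presentable models with $M = \colim_{i < \kappa} M_i$, so \thref{fact:categorical-club-sets}(ii) tells us that $\F' = \{M_i : i < \kappa\}$ is a club set on $\Sub^\kappa_\M(M)$.

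Finally, by \thref{fact:categorical-club-sets}(i) the intersection $\F \cap \F'$ is again a club set, and in particular nonempty. Picking any $M_{i_0} \in \F \cap \F'$ gives the desired index, since $M_{i_0} \in \F$ yields $A \ind_{M_{i_0}}^N M$. There is no real obstacle here; the only thing to watch is bookkeeping the cardinal threshold ($\kappa \geq \Upsilon(A)$ is what feeds into \textsc{Club Local Character}, while $\kappa \geq \lambda$ is what makes the $\kappa$-presentability hypotheses on the $M_i$ and the formation of $\Sub^\kappa_\M(M)$ meaningful).
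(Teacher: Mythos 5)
Your proof is correct and follows the same route as the paper's: apply \textsc{Club Local Character} to get one club set, invoke \thref{fact:categorical-club-sets}(ii) to see the chain is another, and intersect via \thref{fact:categorical-club-sets}(i). The only difference is that you spell out the cardinal bookkeeping that the paper leaves implicit, which is fine.
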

\begin{proof}
Let $\F \subseteq \Sub^\kappa_\M(M)$ be the club set from \textsc{Club Local Character}. By \thref{fact:categorical-club-sets}(ii) the chain $(M_i)_{i < \kappa}$ forms a club set on $\Sub^\kappa_\M(M)$. So by \thref{fact:categorical-club-sets}(i) $\{ M_i : i < \kappa \} \cap \F$ is nonempty.
\end{proof}
\begin{remark}
\thlabel{rem:only-chain-local-character}
For all our results we only need chain local character. That is, the conclusion of \thref{lem:chain-local-character}. In particular the canonicity theorems in Section \ref{sec:canonicity} go through even if we would just assume chain local character.
\end{remark}
Given that we actually only need chain local character, as per \thref{rem:only-chain-local-character}, it is natural to ask whether the converse of \thref{lem:chain-local-character} holds. That is, if chain local character implies \textsc{Club Local Character}. This is not so clear, so we leave it at this.

We recall the following from \cite[Definition 6.13]{kamsma_kim-pillay_2020}.\footnote{There is a slight improvement here. We additionally require here that $c$ embeds in the chain. In \cite{kamsma_kim-pillay_2020} this is the case for every such sequence that is considered. There are only two uses of such sequences. One in Lemma 6.14 where they are constructed, and we indeed get that $c$ embeds in the chain of witnesses of independence. The other one is in the main proof, where such a sequence is found from an application of Lemma 6.14.}
\begin{definition}
\thlabel{def:independent-sequence}
Suppose we have an independence relation $\ind$. Let $(a_i)_{i < \kappa}$ be a sequence in some $M$ and let $c: C \to M$ be an arrow. Suppose that $(M_i)_{i < \kappa}$ is a chain of initial segments for $(a_i)_{i < \kappa}$ and that $c$ embeds in the chain. Then we call $(M_i)_{i < \kappa}$ \emph{witnesses of $\ind_c$-independence}\index{witnesses of independence} for $(a_i)_{i < \kappa}$ if
\[
a_i \ind_c^M M_i
\]
for all $i < \kappa$. We say that a sequence is \emph{$\ind_c$-independent}\index{independent sequence} if it admits a chain of witnesses of $\ind_c$-independence.
\end{definition}
The following proposition is the standard argument showing that we can find arbitrarily long independent sequences, assuming very few properties for our independence relation (see e.g.\ \cite[Proposition 2.2.4]{kim_simplicity_2014}). The proposition after that shows that if we additionally assume \textsc{Union} we can actually get arbitrarily long independent isi-sequences.
\begin{proposition}
\thlabel{prop:build-independent-sequence}
Let $\ind$ be an independence relation satisfying \textsc{Invariance}, \textsc{Existence} and \textsc{Extension}. Then for any $(a, c; M)$ with $\dom(c) \in \base(\ind)$ and any $\kappa$ there is some extension $M \to N$ containing a $\ind_c$-independent sequence $(a_i)_{i < \kappa}$ with $\gtp(a_i, c; N) = \gtp(a, c; M)$ for all $i < \kappa$.
\end{proposition}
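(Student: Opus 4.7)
The natural approach is transfinite recursion on $\kappa$: I will construct a continuous chain of models $(M_i)_{i < \kappa}$ starting with $M_0 = M$, together with arrows $a_i \colon A \to M_{i+1}$ (where $A = \dom(a)$), such that at each step $\gtp(a_i, c; M_{i+1}) = \gtp(a, c; M)$ and $a_i \ind_c^{M_{i+1}} M_i$. The final $N$ will be a chain bound (for instance the directed colimit in $\M$), the $M_i$'s will serve as the chain of initial segments, and $c$ will trivially embed in the chain since it factors through $M_0 = M$.

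\textbf{Successor step.} Suppose $M_0 \to \cdots \to M_i$ has been built. By \textsc{Existence} applied to the subobject represented by $a$ over $c$ in $M$, we get $a \ind_c^M c$. Viewing $a$ and $c$ as arrows into $M_i$ via the chain map $M \to M_i$, I will appeal to \textsc{Invariance}: the common extension $M \to M_i$ and $\mathrm{id}_{M_i}$ witnesses $\gtp(a, c; M_i) = \gtp(a, c; M)$, so $a \ind_c^{M_i} c$. Now I apply \textsc{Extension} with $b = c$ and $b' = M_i$ (the identity, which $c$ factors through): this yields an extension $M_i \to M_{i+1}$ and an arrow $a_i \colon A \to M_{i+1}$ with $\gtp(a_i, c; M_{i+1}) = \gtp(a, c; M_i) = \gtp(a, c; M)$ and $a_i \ind_c^{M_{i+1}} M_i$, as required.

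\textbf{Limit and conclusion.} At a limit ordinal $\ell < \kappa$, I set $M_\ell := \colim_{i < \ell} M_i$, which exists because $\M$ has directed colimits; this also makes the chain continuous by construction. Finally, set $N := \colim_{i < \kappa} M_i$ (or any chain bound). Each $a_i$ is an arrow into $M_{i+1}$, hence into $N$, so $(a_i)_{i<\kappa}$ lives in $N$, and $(M_i)_{i<\kappa}$ is a chain of initial segments with $c$ embedded via $M_0 = M$. To finish, I need $a_i \ind_c^N M_i$ (not merely over the intermediate $M_{i+1}$) and $\gtp(a_i, c; N) = \gtp(a, c; M)$; both follow from \textsc{Invariance} applied to the identity-type-preservation along $M_{i+1} \to N$, since the common extension $M_{i+1} \to N$ with $\mathrm{id}_N$ shows $\gtp(a_i, c, M_i; M_{i+1}) = \gtp(a_i, c, M_i; N)$.

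\textbf{Main subtlety.} The only delicate point is this ``ambient enlargement'' of independence: there is no explicit axiom saying $a \ind_c^M b \Rightarrow a \ind_c^N b$ along an extension $M \to N$, and one must derive it from \textsc{Invariance} by checking that the relevant Galois types are preserved under composition with $M \to N$. Once this is noted it is routine, but it is the one place where the argument uses more than a mechanical application of the listed axioms.
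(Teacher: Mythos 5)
Your proof is correct and follows essentially the same transfinite recursion as the paper: set $M_0 = M$, use \textsc{Existence} plus \textsc{Invariance} to transfer $a \ind_c^M c$ to $a \ind_c^{M_i} c$, apply \textsc{Extension} with $b' = \mathrm{id}_{M_i}$ to get $a_i$ and $M_{i+1}$, take colimits at limit stages, and set $N = \colim_{i<\kappa} M_i$. The ``ambient enlargement'' point you flag is indeed glossed over in the paper's proof, but is handled by exactly the \textsc{Invariance} argument you spell out.
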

\begin{proof}
We construct the witnesses of independence $(M_i)_{i < \kappa}$ and sequence $(a_i)_{i < \kappa}$ by induction. At stage $i$ we will construct $a_i$ and $M_{i+1}$. By \textsc{Existence} we have $a \ind_c^M c$, and so we will have $a \ind_c^{M_i} c$ for all $i < \kappa$. At every stage we will apply \textsc{Extension} to the latter.

For the base case we set $M_0 = M$ and use \textsc{Extension} to find $a_0$ and $M \to M_1$ with $\gtp(a_0, c; M_1) = \gtp(a, c; M)$ and $a_0 \ind_c^{M_1} M_0$. In the successor step we use \textsc{Extension} to find $M_{i+1} \to M_{i+2}$ and $a_{i+1}$ such that $a_{i+1} \ind_c^{M_{i+2}} M_{i+1}$ and $\gtp(a_{i+1}, c; M_{i+2}) = \gtp(a, c; M)$. Finally, for limit $\ell < \kappa$ let $M_\ell = \colim_{i < \ell} M_i$. We use \textsc{Extension} to find $M_\ell \to M_{\ell+1}$ and $a_\ell$ with $\gtp(a_\ell, c; M_{\ell+1}) = \gtp(a, c; M)$ and $a_\ell \ind_c^{M_{\ell+1}} M_\ell$. We finish the construction by taking $N = \colim_{i < \kappa} M_i$.
\end{proof}
\begin{proposition}
\thlabel{prop:build-independent-isi-sequence}
Suppose that $\ind$ is an independence relation satisfying \textsc{Invariance}, \textsc{Existence}, \textsc{Extension} and \textsc{Union}. Then given $(a, c; M)$ with $\dom(c) \in \base(\ind)$ and any $\kappa$, there is a $\ind_c$-independent isi-sequence $(a_i)_{i < \kappa}$ over $c$ in some extension $M \to N$ such that $\gtp(a_i, c; N) = \gtp(a, c; M)$ for all $i < \kappa$.
\end{proposition}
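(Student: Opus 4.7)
The plan is to adapt the transfinite induction of \thref{prop:build-independent-sequence} so that the isi condition is additionally maintained at each stage. The new ingredient \textsc{Union} is used to handle limits.

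\textbf{Successor step.} Suppose we have the chain $(M_j)_{j \leq i+1}$ and sequence $(a_j)_{j \leq i}$ satisfying the inductive hypothesis, in particular $a_i \ind_c^{M_{i+1}} M_i$ and $\gtp(a_j, m_i) = \gtp(a_i, m_i)$ for $j \leq i$. I apply \textsc{Extension} to $a_i \ind_c^{M_{i+1}} M_i$ with $b = m_i$ and $b' = \mathrm{id}_{M_{i+1}}$, noting that $b$ factors through $b'$ trivially. This yields an extension $M_{i+1} \to M_{i+2}$ and $a_{i+1}$ with
\[
\gtp(a_{i+1}, m_i, c; M_{i+2}) = \gtp(a_i, m_i, c; M_{i+2}) \quad \text{and} \quad a_{i+1} \ind_c^{M_{i+2}} M_{i+1}.
\]
Since $c$ and every $m_j$ with $j \leq i$ factor through $m_i$, restriction of the above Galois type equality together with the inductive isi gives $\gtp(a_{i+1}, c) = \gtp(a, c)$ and $\gtp(a_{i+1}, m_j) = \gtp(a_j, m_j)$ for all $j \leq i$, so the isi condition persists at stage $i+1$.

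\textbf{Limit step.} At a limit ordinal $\ell$, set $M_\ell = \colim_{j < \ell} M_j$. I want an $a_\ell$ in some extension $M_\ell \to M_{\ell+1}$ with $\gtp(a_\ell, m_j; M_{\ell+1}) = \gtp(a_j, m_j; M_{\ell+1})$ for every $j < \ell$, and $a_\ell \ind_c^{M_{\ell+1}} M_\ell$. Once such an $a_\ell$ with the matching Galois types is in hand, \textsc{Invariance} applied to $a_j \ind_c^{M_{\ell+1}} M_j$ (from the base induction, transported to the common extension) yields $a_\ell \ind_c^{M_{\ell+1}} M_j$ for every $j < \ell$, and \textsc{Union} applied to the directed system $(M_j)_{j < \ell}$ with colimit $M_\ell$ then delivers $a_\ell \ind_c^{M_{\ell+1}} M_\ell$.

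\textbf{Main obstacle.} The hard part is producing this single $a_\ell$ realising the coherent family $(\gtp(a_j, m_j))_{j < \ell}$. Coherence is guaranteed by the inductive isi: for $j_0 \leq j_1 < \ell$, the Galois type $\gtp(a_{j_1}, m_{j_1})$ restricts to $\gtp(a_{j_1}, m_{j_0}) = \gtp(a_{j_0}, m_{j_0})$. The idea is to iterate \textsc{Extension} along the chain, building a cofinal system of extensions together with coherent partial realisations, and to pass to a colimit whose universal property supplies the desired $a_\ell$. In the absence of compactness this cohering of successive realisations is the delicate point, and it is here that the combination of \textsc{Extension} and \textsc{Union} does the essential work.
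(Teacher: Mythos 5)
The successor step is fine and tracks the paper's own construction in \thref{prop:build-independent-sequence}, with the additional observation (correct) that the Galois type equality coming out of \textsc{Extension} already delivers the isi condition by restriction, since $c$ and the earlier $m_j$ factor through $m_i$.

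The limit step, however, has a genuine gap, and it is precisely at the point you flag as the ``main obstacle.'' You need a single arrow $a_\ell$ into an extension of $M_\ell$ with $\gtp(a_\ell, m_j) = \gtp(a_j, m_j)$ for all $j < \ell$, and you propose to get it by iterating \textsc{Extension} to build ``coherent partial realisations'' and then taking a colimit. This does not work as stated. If at stage $j$ you have an arrow $\alpha_j$ into an extension $N^{(j)}$ of $M_\ell$ with $\gtp(\alpha_j, m_j) = \gtp(a_j, m_j)$, then passing to stage $j+1$ by realising $\gtp(a_{j+1}, m_{j+1})$ gives a fresh arrow $\alpha_{j+1}$ whose Galois type restricts correctly over $m_j$, but which has no reason whatsoever to literally agree with $\alpha_j$ along the extension $N^{(j)} \to N^{(j+1)}$. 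The resulting family is coherent only at the level of Galois types, not as a diagram of arrows. So there is no cocone of the $\alpha_j$ into a colimit, and the universal property you invoke is not available: you would face the same limit problem again inside this inner iteration, and this regress does not close. Realising a coherent chain of Galois types of a fixed arity over an increasing chain of models is a genuine extra hypothesis in the AEC world; it cannot be extracted for free from \textsc{Extension} and \textsc{Union}. Note also that \textsc{Union} only enters after you already have $a_\ell$ in hand, to upgrade $a_\ell \ind_c^{M_{\ell+1}} M_j$ (all $j < \ell$) to $a_\ell \ind_c^{M_{\ell+1}} M_\ell$; it offers no help with producing $a_\ell$.

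The paper's own proof simply cites \cite[Lemma 6.14]{kamsma_kim-pillay_2020}, whose construction maintains coherence at the level of actual arrows, not just Galois types. One way to see where the extra work goes: the Galois type equality $\gtp(a_{i+1}, m_i, c) = \gtp(a_i, m_i, c)$ delivered by \textsc{Extension} is witnessed, after a further extension, by an honest arrow $M_{i+1} \to M_{i+2}$ that agrees with the chain arrow on $M_i$ and carries $a_i$ to $a_{i+1}$. The construction has to carry these witnessing arrows along so that at a limit stage the data organises into a diagram with a colimit. Your proposal does not record these witnesses and so cannot recover the needed arrow at limits.
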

\begin{proof}
This is just \cite[Lemma 6.14]{kamsma_kim-pillay_2020}. The differences in terminology are discussed after \thref{def:independence-relation-types}. In particular, the monotonicity assumption there is only necessary to get what we call \textsc{Extension}. Finally, there is no notion of base class there, but this is only relevant in the application of \textsc{Existence}, which is only applied with $c$ in the base. Hence the assumption $\dom(c) \in \base(\ind)$.
\end{proof}
\section{Independence theorem, 3-amalgamation and stationarity}
\label{sec:independence-theorem-3-amalgamation-and-stationarity}
It is well known that the property \textsc{Independence Theorem} can also be formulated as an amalgamation property of some independent system. This allows for a more categorical statement without any mention of Lascar strong Galois types. However, we need to restrict ourselves to work only over models. We will give this property its own name and prove its equivalence to \textsc{Independence Theorem}, modulo some basic properties, in \thref{thm:independence-theorem-vs-3-amalgamation}.

The contents of this section are not necessary for the results in the rest of this paper, but we do refer to them a few times in remarks and discussions.
\begin{definition}[{\cite[Definition 6.7]{kamsma_kim-pillay_2020}}]
\thlabel{def:3-amalgamation}
An independence relation $\ind$ has \textsc{3-amalgamation} if the following holds. Suppose that we have
\[
A \ind_M^{N_1} B, \quad
B \ind_M^{N_2} C, \quad
C \ind_M^{N_3} A,
\]
overloading notation for subobjects of different models. Suppose furthermore that $M$ is a model and that
\begin{align*}
\gtp(a, m; N_1) &= \gtp(a, m; N_3), \\
\gtp(b, m; N_1) &= \gtp(b, m; N_2), \\
\gtp(c, m; N_2) &= \gtp(c, m; N_3),
\end{align*}
where $a$, $b$, $c$ and $m$ are representatives for the subobjects $A$, $B$, $C$ and $M$ respectively (again, overloading notation for different models). Then we can find extensions from $N_1$, $N_2$ and $N_3$ to some $N$ such that the diagram we obtain in that way commutes:
\[
\begin{tikzcd}[row sep=tiny]
 & N_1 \arrow[rrr, dashed] &  &  & N \\
A \arrow[rrr] \arrow[ru] &  &  & N_3 \arrow[ru, dashed] &  \\
 & B \arrow[uu] \arrow[rrr] &  &  & N_2 \arrow[uu, dashed] \\
M \arrow[rrruu] \arrow[rrrru] \arrow[ruuu] &  &  & C \arrow[ru] \arrow[uu] & 
\end{tikzcd}
\]
Furthermore, these extensions are such that $A \ind_M^N N_2$.
\end{definition}
\begin{theorem}
\thlabel{thm:independence-theorem-vs-3-amalgamation}
Let $\ind$ be a basic independence relation. If $\ind$ satisfies \textsc{Independence Theorem} then it also satisfies \textsc{3-amalgamation}. Conversely, if $\ind$ satisfies \textsc{3-amalgamation} then it satisfies \textsc{Independence Theorem} over models (i.e.\ we require the base $C$ to be a model).
\end{theorem}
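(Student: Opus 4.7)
The plan is to prove the two implications separately, each by amalgamating everything into a single ambient model and then invoking the respective hypothesis.

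For the direction \textsc{Independence Theorem} $\Rightarrow$ \textsc{3-amalgamation}, I would first use AP together with $\gtp(c, m; N_2) = \gtp(c, m; N_3)$ to amalgamate $N_2$ and $N_3$ along $C$ and $M$ into some $P$, and then AP with $\gtp(b, m; N_1) = \gtp(b, m; N_2)$ to amalgamate $N_1$ and $P$ along $B$ and $M$ into $Q$. Inside $Q$ we then have two arrows $a_1$ (coming from $N_1$) and $a_3$ (coming from $N_3$ via $P$) both representing $A$. By \textsc{Invariance} and \textsc{Symmetry}, the three independences $a_1 \ind_M^Q B$, $a_3 \ind_M^Q C$ and $B \ind_M^Q C$ all transfer to $Q$, while $\gtp(a_1, m; Q) = \gtp(a, m; N_1) = \gtp(a, m; N_3) = \gtp(a_3, m; Q)$ upgrades to $\Lgtp(a_1/m; Q) = \Lgtp(a_3/m; Q)$ because $M$ is a model (take the witness $m_0$ in the definition of $\sim_{\Lgtp}$ to be $m$ itself). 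Applying \textsc{Independence Theorem} now yields an extension $Q \to N$ and an arrow $a^*$ with $\Lgtp(a^*, B/M; N) = \Lgtp(a_1, B/M; N)$, $\Lgtp(a^*, C/M; N) = \Lgtp(a_3, C/M; N)$ and $a^* \ind_M^N Q$; in particular $a^* \ind_M^N N_2$ by \textsc{Monotonicity} and \textsc{Symmetry}, which is the last clause of \textsc{3-amalgamation}. Restricting the two Lascar type equalities to Galois type equalities $\gtp(a^*, b; N) = \gtp(a_1, b; N)$ and $\gtp(a^*, c; N) = \gtp(a_3, c; N)$, a final AP step produces an extension $N \to N^\dagger$ into which $N_1$ and $N_3$ embed so that both copies of $A$ are identified with $a^*$, giving the full commuting 3-amalgamation diagram.

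For the direction \textsc{3-amalgamation} $\Rightarrow$ \textsc{Independence Theorem over models}, the key observation is that Lascar type equality over a model refines to Galois type equality. Unfolding $\sim_{\Lgtp}$, each single step is witnessed by some model $m_0$ through which $m$ factors, with $\gtp(a, m_0; \cdot) = \gtp(a', m_0; \cdot)$, and monotonicity of Galois type restricts this to $\gtp(a, m; \cdot) = \gtp(a', m; \cdot)$; chaining along the transitive closure therefore gives $\gtp(a, m; M) = \gtp(a', m; M)$. I would then apply \textsc{3-amalgamation} directly with $N_1 = N_2 = N_3 = M$, using the three given independences $a \ind_m b$, $b \ind_m b'$ and $b' \ind_m a'$ (the last via \textsc{Symmetry}). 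Two of the three matching-types hypotheses of \textsc{3-amalgamation} hold trivially (the same arrow into $M$), and the remaining one is exactly $\gtp(a, m; M) = \gtp(a', m; M)$ derived above. The output is an extension $M \to N$ in which a single arrow $a^*$ coincides, as an arrow into $N$, with both $a$ (via $N_1 \to N$) and $a'$ (via $N_3 \to N$). The asserted Lascar type equalities follow immediately because the relevant arrows literally coincide in $N$, and $a^* \ind_m^N M$ is just the $A \ind_M^N N_2$ conclusion of \textsc{3-amalgamation} read with $N_2 = M$.

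The hard part will be the final commutativity step in direction (i): once \textsc{Independence Theorem} has produced $a^*$ with the prescribed Lascar types, extracting a genuinely commuting 3-amalgamation diagram (rather than one commuting only up to Galois type) requires another careful AP argument together with bookkeeping of which representative of $A$ each type equality pins down. Direction (ii), by contrast, is essentially a one-line unfolding once one notices the Lascar-to-Galois refinement over a model; most of the remaining work is straightforward transfer of the given independences through \textsc{Invariance} and \textsc{Symmetry}.
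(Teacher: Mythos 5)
Your proposal follows essentially the same route as the paper in both directions: amalgamate everything into a single ambient model, apply the relevant hypothesis, and then shuttle between Lascar strong Galois types and ordinary Galois types over the base model $m$ (using that over a model the two notions coincide, in both directions). The one place where your argument as written does not quite go through is the sentence in direction two asserting that ``the asserted Lascar type equalities follow immediately because the relevant arrows literally coincide in $N$.'' The conclusion of \textsc{Independence Theorem} fixes a \emph{single} extension $M \to N$ along which $a$, $a'$, $b$, $b'$, $m$ are all viewed. Writing $f,h,g$ for the three arrows $N_1,N_2,N_3 \to N$ produced by \textsc{3-amalgamation} (with $N_1 = N_2 = N_3 = M$), you can arrange for $a^* = fa$ to coincide literally with the $h$-image of $a$ only by choosing $f$ as your designated extension; but then for the second equality you must compare $a^* = ga'$ with $fa'$, which are different arrows. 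So you cannot make both equalities tautological simultaneously. The repair is exactly the observation you already made in your preamble: both $(a^*, b, m) = (fa, fb, fm)$ and $(a, b, m) = (ha, hb, hm)$ are the images of the tuple $(a,b,m)$ in $M$ under the two embeddings $f, h : M \to N$, so they have the same Galois type; and over a model, Galois type equality gives Lascar strong Galois type equality by taking $m_0 = m$ in the definition of $\sim_{\Lgtp}$. This is precisely how the paper closes the argument. The rest of the proposal, including the first direction and your flagged concern about the final commutativity step, matches the paper's proof.
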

\begin{proof}
We first prove that \textsc{Independence Theorem} implies \textsc{3-amalgamation}. Let the set up be as in \thref{def:3-amalgamation}. In the diagram below we find the dashed arrows by using $\gtp(c, m; N_2) = \gtp(c, m; N_3)$ and $\gtp(b, m; N_1) = \gtp(b, m; N_2)$.
\[
\begin{tikzcd}[row sep=tiny]
                                           & N_1 \arrow[rrr, dashed]  &  &                         & N'                        \\
A \arrow[rrr] \arrow[ru]                   &                          &  & N_3 \arrow[r, dashed]   & \bullet \arrow[u, dashed] \\
                                           & B \arrow[rrr] \arrow[uu] &  &                         & N_2 \arrow[u, dashed]     \\
M \arrow[rrrru] \arrow[ruuu] \arrow[rrruu] &                          &  & C \arrow[ru] \arrow[uu] &                          
\end{tikzcd}
\]
We write $a_1$ for the arrow $A \to N_1 \to N'$ and $a_3$ for the arrow $A \to N_3 \to N'$. Then we have $\Lgtp(a_1 / m; N') = \Lgtp(a_3 / m; N')$. We can thus apply \textsc{Independence Theorem} to find some extension $N' \to N^*$ with some $a^*: A \to N^*$ such that $\Lgtp(a^*,b/m; N^*) = \Lgtp(a_1,b/m; N^*)$, $\Lgtp(a^*,c/m; N^*) = \Lgtp(a_3,c/m; N^*)$ and $a^* \ind_M^{N^*} N'$. So in particular we have $a^* \ind_M^{N^*} N_2$ by \textsc{Monotonicity}. Using $\gtp(a, b, m; N_1) = \gtp(a_1, b, m; N^*) = \gtp(a^*, b, m; N^*)$ and $\gtp(a, c, m; N_3) = \gtp(a_3, c, m; N^*) = \gtp(a^*, c, m; N^*)$ after each other we find an extension $N^* \to N$ together with extensions from $N_1$ and $N_3$ to $N$ and we just forget about the previous extensions from $N_1$ and $N_3$ to $N^*$. These two new extensions, together with $N_2 \to N^* \to N$, then form the solution to our \textsc{3-amalgamation} problem.

Now we prove the converse. So we assume \textsc{3-amalgamation} and we prove \textsc{Independence Theorem} over models. So suppose that $a \ind_m^N b$, $a' \ind_m^N c$ and $b \ind_m^N c$ and $\Lgtp(a / m; N) = \Lgtp(a' / m; N)$. Then we can form the commuting diagram as below, where we find the dashed arrows by \textsc{3-amalgamation}.
\[
\begin{tikzcd}[row sep=small]
                                                             & N \arrow[rrrr, "f" description, dashed]                     &  &  &                                                           & N^*                                   \\
A \arrow[rrrr, "a'" description] \arrow[ru, "a" description] &                                                             &  &  & N \arrow[ru, "g" description, dashed]                     &                                       \\
                                                             & B \arrow[uu, "b" description] \arrow[rrrr, "b" description] &  &  &                                                           & N \arrow[uu, "h" description, dashed] \\
M \arrow[rrrruu] \arrow[rrrrru] \arrow[ruuu]                 &                                                             &  &  & C \arrow[ru, "c" description] \arrow[uu, "c" description] &                                      
\end{tikzcd}
\]
We take the extension $N \to N^*$ to be $h$ and write $a^*$ for $fa = ga'$. The application of \textsc{3-amalgamation} yields $a^* \ind_m^{N^*} N$. Furthermore, $\gtp(a^*, b, m; N^*) = \gtp(a, b, m; N) = \gtp(a, b, m; N^*)$, so $\Lgtp(a^*, b / m; N^*) = \Lgtp(a, b / m; N^*)$ because the domain of $m$ is a model. Similarly, we also find $\Lgtp(a^*, c / m; N^*) = \Lgtp(a', c / m; N^*)$, which concludes the proof.
\end{proof}
We recall that \textsc{3-amalgamation} follows from \textsc{Stationarity}. This might for example be useful in the case where $\base(\ind) = \M$, so one does not need to verify the \textsc{Independence Theorem} property for a stable independence relation, as it follows automatically. In particular this means that a stable independence relation in the sense of \cite{lieberman_forking_2019} also yields a stable independence relation in our setting and vice versa, see also \cite[Remark 6.7]{kamsma_kim-pillay_2020} for further comparison.
\begin{fact}[{\cite[Proposition 6.16]{kamsma_kim-pillay_2020}}]
\thlabel{fact:stationarity-implies-3-amalgamation}
Let $\ind$ be a basic independence relation satisfying \textsc{Stationarity} then it also satisfies \textsc{3-amalgamation}.
\end{fact}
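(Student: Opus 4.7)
The strategy is to amalgamate $N_1, N_2, N_3$ pairwise via AP, using \textsc{Extension} to produce a sufficiently generic copy of $A$ so that \textsc{Stationarity} can identify the two resulting copies of $C$. First I would amalgamate $N_1$ and $N_2$ over $B, M$ via AP, which is legal since $\gtp(b, m; N_1) = \gtp(b, m; N_2)$. This yields $N_{12}$ in which $A, B, C, M$ all appear as subobjects, and by \textsc{Invariance} we have $A \ind_M^{N_{12}} B$. I would then apply \textsc{Extension} with $b' = N_{12}$ to produce an extension $N_{12} \to N_{12}'$ together with $a^* : A \to N_{12}'$ satisfying $\gtp(a^*, b, m; N_{12}') = \gtp(a, b, m; N_1)$ and $a^* \ind_M^{N_{12}'} N_{12}$; in particular $a^* \ind_M^{N_{12}'} C$ by \textsc{Monotonicity}.

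Next, amalgamate $N_{12}'$ with $N_3$ over $A, M$ via AP, identifying the $a$ from $N_3$ with $a^*$; this is legal because $\gtp(a^*, m; N_{12}') = \gtp(a, m; N_1) = \gtp(a, m; N_3)$. Call the result $N_{123}$. Inside $N_{123}$ we now have two images of $C$, namely $c_2$ (from $N_2$ via $N_{12}$) and $c_3$ (from $N_3$), with matching Galois type over $m$. From $a^* \ind_M^{N_{12}'} C$ and \textsc{Invariance} we get $a^* \ind_M^{N_{123}} c_2$; from $C \ind_M^{N_3} A$ together with \textsc{Symmetry} and \textsc{Invariance} we get $a^* \ind_M^{N_{123}} c_3$. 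Applying \textsc{Symmetry} to both and then \textsc{Stationarity} --- valid since $M$ is a model and the two $C$-images have matching Galois type over $m$ --- yields $\gtp(c_2, m, a^*; N_{123}) = \gtp(c_3, m, a^*; N_{123})$, producing an extension $N_{123} \to N'$ in which $c_2$ and $c_3$ become the same arrow.

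A final AP step, exploiting $\gtp(a, b, m; N_1) = \gtp(a^*, b, m; N')$, produces an extension $N' \to N$ together with an embedding $N_1 \to N$ sending $a \mapsto a^*$, $b \mapsto b$ and $m \mapsto m$. The three embeddings $N_i \to N$ now agree pairwise on all shared subobjects: on $A$ both $N_1$ and $N_3$ map to $a^*$; on $B$ both $N_1$ and $N_2$ map to the AP-amalgamated $b$; on $C$ both $N_2$ and $N_3$ map to the identified $c$. Thus the diagram in \thref{def:3-amalgamation} commutes, and $A \ind_M^N N_2$ follows from $a^* \ind_M^{N_{12}'} N_{12}$ via \textsc{Invariance} and \textsc{Monotonicity}, since $N_2$ embeds into $N_{12}$.

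\textbf{Main obstacle.} The conceptual crux is recognizing that \textsc{Stationarity} is exactly the tool needed to collapse the two a priori distinct copies $c_2, c_3$ of $C$, once both have been arranged to be independent from a common copy $a^*$ of $A$ over the model $M$. The remaining difficulty is bookkeeping: the produced $a^*$ must be generic enough to serve as the image of $A$ from all three $N_i$ simultaneously, which is why \textsc{Extension} is invoked only \emph{after} the first amalgamation --- so that $a^*$ lands independent from $B$ (via $N_1$) and $C$ (via $N_2$) at once, making the subsequent \textsc{Stationarity} application available.
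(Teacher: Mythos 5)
Your proof is correct and takes the standard route: amalgamate $N_1$ and $N_2$ over $(b,m)$, use \textsc{Extension} to obtain a generic $a^*$ independent from all of $N_{12}$, amalgamate with $N_3$ over $(a,m)$ identifying the two copies of $A$, then let \textsc{Stationarity} reconcile the two copies of $C$, and finish with one more amalgamation step.

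One point is imprecisely stated. Since every arrow in $\C$ is a monomorphism, a single extension $N_{123} \to N'$ can never send the two distinct arrows $c_2, c_3 : C \to N_{123}$ to the same arrow, so the Galois-type equality $\gtp(c_2, m, a^*; N_{123}) = \gtp(c_3, m, a^*; N_{123})$ does not ``produce an extension in which $c_2$ and $c_3$ become the same arrow''. What it actually yields is a pair of arrows $f, g : N_{123} \to N'$ with $fc_2 = gc_3$, $fm = gm$ and $fa^* = ga^*$. In the final step you should therefore embed $N_2$ into $N'$ through $f$ (via $N_2 \to N_{12}' \to N_{123} \xrightarrow{f} N'$) and $N_3$ through $g$ (via $N_3 \to N_{123} \xrightarrow{g} N'$); these agree on $C$ and on $M$ by construction. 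For $N_1$, use $\gtp(a,b,m; N_1) = \gtp(a^*, b, m; N_{12}') = \gtp(fa^*, fb, fm; N')$ to produce the required embedding into a further extension of $N'$; it agrees with the $N_2$-embedding on $B$ because both pass through $f$, and with the $N_3$-embedding on $A$ because $fa^* = ga^*$. With this adjustment your diagram commutes, and $A \ind^N_M N_2$ follows from $a^* \ind^{N_{12}'}_M N_{12}$ by \textsc{Invariance} and \textsc{Monotonicity} exactly as you say.
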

\section{Long dividing, isi-dividing and long Kim-dividing}
\label{sec:long-dividing}
In this section we introduce various notions of dividing, each yielding its own independence relation. These notions are based on the classical notion of dividing, as we know it from first-order logic. For the convenience of the reader, and to compare it to the new definitions, we recall the definition of dividing.
\begin{definition}
\thlabel{def:classical-dividing}
In the classical setting of first-order logic, we say that a type $p(x, b) = \tp(a/Cb)$ \emph{divides over $C$} if there is a $C$-indiscernible sequence $(b_i)_{i < \omega}$ such that $\tp(b_i/C) = \tp(b/C)$ for all $i < \omega$ and $\bigcup_{i < \omega} p(x, b_i)$ is inconsistent.
\end{definition}
In many proofs, to use this definition, one has to apply compactness in one way or another. For example to elongate the sequence $(b_i)_{i < \omega}$, or to find a finite subsequence along which $p$ is inconsistent. This is generally an issue in AECats, because we do not have compactness there. To solve this we introduce the notion of long dividing. The name is due to the fact that we consider arbitrarily long sequences in the definition, something that we would normally have to use compactness for. This is very close to the notion of isi-dividing from \cite[Definition 5.7]{kamsma_kim-pillay_2020}. In fact, isi-dividing is just long dividing but then restricted to isi-sequences, so that we have some homogeneity in the sequences involved. Of course, indiscernible sequences would be even more homogeneous, but the little bit that isi-sequences offer us turns out to be enough.

We remind the reader of \thref{conv:regular-cardinals}, namely that all cardinals considered are regular. So in the following definition we only quantify over regular cardinals.
\begin{definition}
\thlabel{def:long-dividing}
Let $(\C, \M)$ be an AECat with AP and fix some $(a, b, c; M)$.
\begin{enumerate}[label=(\roman*)]
\item Suppose that we have some sequence $((b_i)_{i \in I}; M)$ such that $\gtp(b_i, c; M) = \gtp(b, c; M)$ for all $i \in I$. We say that $\gtp(a, b, c; M)$ is \emph{consistent} for $(b_i)_{i \in I}$ if there is an extension $M \to N$ and an arrow $(a'; N)$ such that
\[
\gtp(a, b, c; M) = \gtp(a', b_i, c; N)
\]
for all $i \in I$. We call $a'$ a \emph{realisation} of $\gtp(a, b, c; M)$ for $(b_i)_{i \in I}$.

Being inconsistent is the negation of the above. So $\gtp(a, b, c; M)$ is \emph{inconsistent} for $(b_i)_{i \in I}$ if there is no extension of $M$ with a realisation $a'$ of $\gtp(a, b, c; M)$ for $(b_i)_{i \in I}$.
\item We say that $\gtp(a, b, c; M)$ \emph{long divides} over $c$ if there is $\mu$ such that for every $\lambda \geq \mu$ there is a sequence $(b_i)_{i < \lambda}$ in some extension $M \to N$ with $\gtp(b_i, c; N) = \gtp(b, c; M)$ for all $i < \lambda$, such that for some $\kappa < \lambda$ and every $I \subseteq \lambda$ with $|I| = \kappa$ we have that $\gtp(a, b, c; M)$ is inconsistent for $(b_i)_{i \in I}$.
\item We say that $\gtp(a, b, c; M)$ \emph{isi-divides} if it long divides with respect to isi-sequences over $c$. That is, we require the sequence $(b_i)_{i < \lambda}$ to be an isi-sequence over $c$.
\end{enumerate}
\end{definition}
\begin{remark}
\thlabel{rem:long-dividing-vs-dividing}
We discussed how long dividing and isi-dividing are inspired by dividing. A natural question would be whether or not they are actually the same. This was discussed in \cite{kamsma_kim-pillay_2020} from Remark 5.8 and onwards. The discussion there is just about isi-dividing but applies to long dividing as well. The summary is as follows, restricting ourselves to AECats obtained from a first-order theory.
\begin{enumerate}[label=(\roman*)]
\item Dividing implies isi-dividing implies long dividing.
\item Long dividing (and thus isi-dividing) implies dividing if we assume the existence of a proper class of Ramsey cardinals.
\item Using the canonicity theorem, \thref{thm:canonicity-of-simple-independence}, we can actually drop the large cardinal assumption and conclude that isi-dividing implies dividing in simple theories.
\item The question remains: does long dividing (and thus isi-dividing) generally imply dividing, without the large cardinal assumption?
\end{enumerate}
In fact, dividing makes sense in positive logic as well (see \cite{pillay_forking_2000, ben-yaacov_simplicity_2003}), and the above discussion applies to AECats obtained from a positive theory as well. There is an even more general setting, namely that of finitely short AECats, where this discussion applies. These are AECats where a Galois type of an infinite tuple of arrows is determined by the Galois types of the finite subtuples, see \cite[Section 4]{kamsma_independence_2021}. This is more or less the same framework as that of homogeneous model theory in the sense of \cite{buechler_simple_2003} (see \cite[Example 4.4]{kamsma_kim-pillay_2020}).
\end{remark}
The point of these dividing notions is that they yield independence relations. We can already prove some basic properties about these independence relations in arbitrary AECats, similar to the basic properties that dividing always has.
\begin{proposition}
\thlabel{prop:long-dividing-subobjects}
Let $A, B, C \leq M$ be subobjects. Let $(a, b, c; M)$ and $(a', b', c'; M)$ be two sets of representatives. Then $\gtp(a, b, c; M)$ long divides over $c$ if and only if $\gtp(a', b', c'; M)$ long divides over $c'$. The same statement holds for isi-dividing.
\end{proposition}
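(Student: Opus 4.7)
This proposition should be a routine invariance check: long dividing is defined entirely in terms of Galois types and consistency, both of which are insensitive to precomposing arrows on the source. Since $(a,b,c)$ and $(a',b',c')$ represent the same subobjects of $M$, I would fix isomorphisms $\sigma_a : \dom(a) \to \dom(a')$, $\sigma_b : \dom(b) \to \dom(b')$ and $\sigma_c : \dom(c) \to \dom(c')$ with $a' \sigma_a = a$, $b' \sigma_b = b$ and $c' \sigma_c = c$. By symmetry it suffices to prove the forward implication.

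Assume $(b_i)_{i<\lambda}$ in some extension $M \to N$ (together with some $\kappa < \lambda$) witnesses long dividing of $\gtp(a,b,c;M)$ over $c$. Define $b'_i = b_i \sigma_b^{-1} : \dom(b') \to N$. Applying monotonicity of Galois types (\thref{fact:galois-types}(ii)) by precomposing with $\sigma_b^{-1}$ and $\sigma_c^{-1}$ turns the equality $\gtp(b_i,c;N) = \gtp(b,c;M)$ into $\gtp(b'_i,c';N) = \gtp(b',c';M)$, using $b \sigma_b^{-1} = b'$ and $c \sigma_c^{-1} = c'$. For the isi-case I would reuse the same chain of initial segments $(M_i)_{i<\lambda}$: the embedding of $c$ at $M_0$, say via $c^* : \dom(c) \to M_0$, yields an embedding $c^* \sigma_c^{-1}$ of $c'$ at $M_0$, and the isi-condition $\gtp(b'_i, m_i; N) = \gtp(b'_j, m_i; N)$ for $i \leq j$ follows from the analogous equality for $(b_i)$ by the same precomposition trick.

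It remains to transfer inconsistency. Suppose towards a contradiction that some subset $I \subseteq \lambda$ with $|I| = \kappa$ admits a realisation $\hat{a}' : \dom(a') \to N^*$ of $\gtp(a',b',c';M)$ for $(b'_i)_{i\in I}$ in an extension $M \to N^*$. Then $\hat{a} = \hat{a}' \sigma_a$ should realise $\gtp(a,b,c;M)$ for $(b_i)_{i \in I}$: precomposing $\gtp(\hat{a}',b'_i,c';N^*) = \gtp(a',b',c';M)$ with $\sigma_a$, $\sigma_b$ and $\sigma_c$ (again by \thref{fact:galois-types}(ii)) and using $\hat{a}' \sigma_a = \hat{a}$, $b'_i \sigma_b = b_i$ and $c' \sigma_c = c$ yields the required equality. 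This contradicts the assumed inconsistency and completes the forward direction; the converse is immediate by symmetry.

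There is no real obstacle here, only bookkeeping. The only point requiring a moment of care is that a single chain of initial segments can be used for both representations in the isi-case, which is automatic once one observes that precomposing with $\sigma_c^{-1}$ preserves factorisations through $M_0$.
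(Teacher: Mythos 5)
Your proof is correct and takes essentially the same approach as the paper's: the paper likewise fixes isomorphisms $f, g, h$ with $a' = af$, $b' = bg$, $c' = ch$ (your $\sigma_a^{-1}, \sigma_b^{-1}, \sigma_c^{-1}$), transports the witnessing sequence via precomposition using \thref{fact:galois-types}(ii), and observes that the same chain of initial segments works for the isi-case. The paper gives this more tersely, but the mechanism — precompose everything by the appropriate isomorphism and invoke monotonicity of Galois types — is identical.
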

\begin{proof}
This comes down to checking all the definitions, which is lengthy to do in detail. However, there is only one trick that we repeatedly use, and that is \thref{fact:galois-types}(ii). To apply this trick we let $f, g, h$ be isomorphisms such that $a' = af$, $b' = bg$ and $c' = ch$. Then, using the above trick, we easily see that for any sequence $(b_i)_{i < \lambda}$ witnessing long dividing of $\gtp(a, b, c; M)$ we have that $(b_i g)_{i < \lambda}$ witnesses long dividing for $\gtp(a, b g, c; M) = \gtp(a, b', c; M)$. Similarly we can replace $c$ by $ch = c'$ and $a$ by $af = a'$. The same holds for isi-dividing, noting that any isi-sequence over $c$ is also an isi-sequence over $ch = c'$.
\end{proof}
\begin{definition}
\thlabel{def:long-dividing-independence}
For subobjects $A, B, C \leq M$ we write $A \ind_C^{\ld,M} B$ if $\gtp(a, b, c; M)$ does not long divide for all (equivalently: some) representatives $a, b, c$ of $A, B, C$. Similarly, we write $A \ind_C^{\isid,M} B$ if $\gtp(a, b, c; M)$ does not isi-divide.
\end{definition}
As $\ind^\ld$ and $\ind^\isid$ do not generally satisfy \textsc{Symmetry} we have to distinguish between ``left'' and ``right'' versions of certain properties, as we do below.
\begin{proposition}
\thlabel{prop:basic-properties-long-dividing-and-isi-dividing}
Long dividing and isi-dividing always satisfy the following: \textsc{Invariance}, \textsc{Left-Monotonicity}, \textsc{Existence} and \textsc{Base-Monotonicity}. In addition, long dividing also satisfies \textsc{Right-Monotonicity}
\end{proposition}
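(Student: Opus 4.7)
I verify each clause directly from \thref{def:long-dividing}, manipulating Galois types via the three clauses of \thref{fact:galois-types} together with AP. The recurring strategy is that the same witnessing sequence, or a lift thereof, works for the related instance of long dividing (or isi-dividing).

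\textsc{Invariance}, \textsc{Left-Monotonicity} and \textsc{Existence} are essentially formal. For \textsc{Invariance}, the equality $\gtp(a,b,c;M) = \gtp(a',b',c';M')$ provides a common extension identifying the two tuples, along which any witnessing sequence transports. For \textsc{Left-Monotonicity}, with $e \colon A' \to A$ the subobject inclusion, any realisation $a^*$ of $\gtp(a,b,c;M)$ for $(b_i)_{i \in I}$ yields a realisation $a^* e$ of $\gtp(a',b,c;M)$ via \thref{fact:galois-types}(ii), so inconsistency of the smaller type implies inconsistency of the larger, and the same sequence witnesses long dividing (resp.\ isi-dividing) of both. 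For \textsc{Existence}, a common extension witnessing $\gtp(c_i, c; N) = \gtp(c, c; M)$ identifies $c_i$ with $c$; iterating this amalgamation via AP collapses all $c_i$ to $c$ in a single extension, in which $a$ itself realises $\gtp(a, c, c; M)$ for $(c_i)_{i \in I}$.

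For \textsc{Base-Monotonicity}, let $C \leq C' \leq B$ with subobject inclusions $f \colon C \to C'$ and $g \colon C' \to B$, so $c = c'f$ and $c' = bg$. Suppose $(a,b,c')$ long divides via $(b_i)$. Then $\gtp(b_i, c; N) = \gtp(b, c; M)$ follows from $\gtp(b_i, c'; N) = \gtp(b, c'; M)$ by \thref{fact:galois-types}(ii) applied along $f$. The key observation is that the equality $\gtp(b_i, c'; N) = \gtp(b, c'; M)$ forces $b_i g = c'$ as arrows into $N$: in any witnessing common extension $W$, the matching of $b_i$ with $b$ and of $c'$ with $c'$ combine to give $b_i g = bg = c'$ in $W$, which transfers back to $N$ by monicity of $N \to W$. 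Then for any realisation $a^*$ of $(a,b,c)$ for $(b_i)_{i \in I}$ in some $N^*$, \thref{fact:galois-types}(ii) applied along $g$ gives
\[
\gtp(a^*, b_i, c, b_i g; N^*) = \gtp(a, b, c, c'; M),
\]
and since $b_i g = c'$ already holds in $N^*$, the left-hand tuple is literally $(a^*, b_i, c, c')$, so restriction yields a realisation of $(a, b, c')$ for $(b_i)_{i \in I}$, contradicting the long-dividing assumption. The same argument applies to isi-dividing because any isi-sequence over $c'$ is automatically an isi-sequence over $c$ (as $c$ factors through $c'$).

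For \textsc{Right-Monotonicity} of long dividing, let $g \colon B' \to B$ be the subobject inclusion with $b' = bg$, and suppose $(a, b', c)$ long divides via $(b'_i)$. Using \thref{fact:galois-types}(iii), inductively lift each $b'_i$ to some $b_i \colon B \to N_i$ with $\gtp(b_i, b'_i, c; N_i) = \gtp(b, bg, c; M)$, which by the same mono argument as in \textsc{Base-Monotonicity} forces $b_i g = b'_i$ in $N_i$. Iterated AP yields a single extension $N^*$ containing all the $b_i$ with $b_i g = b'_i$; then $(b_i)$ witnesses long dividing of $(a,b,c)$, since any realisation of $(a, b, c)$ for $(b_i)_{i \in I}$ would, via \thref{fact:galois-types}(ii) along $g$, yield a realisation of $(a, b', c)$ for $(b'_i)_{i \in I}$, a contradiction. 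This lifting step destroys the isi-property of the original sequence, which is why the argument does not extend to isi-dividing. The main technical obstacle throughout is recognising that equalities of Galois types force genuine coincidences of arrows in the ambient model, translating the formal manipulations of \thref{fact:galois-types} into honest identifications of sequence elements; this rests on every arrow in $\C$ being a monomorphism.
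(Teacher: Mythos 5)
Your proof is correct and follows essentially the same route as the paper: the same witnessing sequence handles \textsc{Base-Monotonicity} (and hence the argument survives for isi-dividing), while \textsc{Right-Monotonicity} is handled by lifting the sequence and noting this destroys the isi-property. The only difference is that you spell out explicitly, via monomorphism arguments, the identifications of arrows that the paper treats as immediate in its ``direct from the definition'' and ``factors in the same way'' remarks.
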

\begin{proof}
Everything is direct from the definition, except for \textsc{Right-Monotonicity} for long dividing and \textsc{Base-Monotonicity}. For both we will prove the contraposition.

For \textsc{Base-Monotonicity} let $(a, b, c, c'; M)$ be such that $\gtp(a, b, c'; M)$ long divides over $c'$ and $C \leq C' \leq B$, where $C, C', B$ are the subobjects represented by $c, c', b$ respectively. Let $\mu$ be as in the definition of long dividing and let $\lambda \geq \mu$. Then there is $(b_i)_{i < \lambda}$ in some $N$ that witnesses long dividing of $\gtp(a, b, c'; M)$ over $c'$. We will prove that it also witnesses long dividing of $\gtp(a, b, c; M)$. Indeed we have for all $i < \lambda$ that $\gtp(b_i, c'; N) = \gtp(b, c'; M)$ and thus $\gtp(b_i, c; N) = \gtp(b, c; M)$, because $C \leq C'$. Let $\kappa < \lambda$ be such that for $I \subseteq \lambda$ with $|I| = \kappa$ we have that $\gtp(a, b, c'; M)$ is inconsistent for $(b_i)_{i \in I}$. We claim that for such $I$ we also have that $\gtp(a, b, c; M)$ is inconsistent for $(b_i)_{i \in I}$. Suppose that there would be a realisation $(a'; N')$ for some extension $N \to N'$, then $\gtp(a', b_i, c; N') = \gtp(a, b, c; M)$ for all $i \in I$. Since $c'$ factors through $b$ and $b_i$ in the same way for all $i < \lambda$, see \thref{fact:galois-type-factorisation}, we then have $\gtp(a', b_i, c'; N') = \gtp(a, b, c'; M)$ for all $i \in I$, contradicting that $\gtp(a, b, c'; M)$ is inconsistent for $(b_i)_{i \in I}$. This proves \textsc{Base-Monotonicity} for long dividing. We have shown that the same sequences that witness long dividing of $\gtp(a, b, c'; M)$ also witness long dividing of $\gtp(a, b, c; M)$. As any isi-sequence over $c'$ is an isi-sequence over $c$, the same proof shows that isi-dividing has \textsc{Base-Monotonicity}.

Now we prove \textsc{Right-Monotonicity} for long dividing. Let $(a, b, b', c; M)$ be such that $\gtp(a, b, c; M)$ long divides over $c$ and $b$ factors through $b'$. For any sequence $(b_i)_{i < \lambda}$ in some $N$ witnessing long dividing we can form $(b_i')_{i < \lambda}$ by letting $b_i'$ be such that $\gtp(b_i', b_i, c; N) = \gtp(b', b, c; M)$ for all $i < \lambda$ (possibly replacing $N$ by an extension in the process). Then for $I \subseteq \lambda$ a realisation of $\gtp(a, b', c; M)$ for $(b_i')_{i \in I}$ would also be a realisation of $\gtp(a, b, c; M)$ for $(b_i)_{i \in I}$. So if we let $\kappa < \lambda$ be such that for every $I \subseteq \lambda$ with $|I| = \kappa$ we have that $\gtp(a, b, c; M)$ is inconsistent for $(b_i)_{i \in I}$, we also get that $\gtp(a, b', c; M)$ is inconsistent for $(b_i')_{i \in I}$ for any such $I$. We conclude that $\gtp(a, b', c; M)$ long divides over $c$.
\end{proof}
We note that in the above proof we did not have to change the sequence involved for \textsc{Base-Monotonicity}, which was why the same proof also works for isi-dividing. In the proof of \textsc{Right-Monotonicity} we had to build a new sequence, which might not be an isi-sequence again. This is why that proof only works for long dividing.

An important property that misses in \thref{prop:basic-properties-long-dividing-and-isi-dividing} is \textsc{Extension}. Classically (working in a monster model) this is fixed by considering forking instead of dividing. This forces the \textsc{Extension} property as follows. Suppose that \textsc{Extension} fails for some type $p = \tp(a/Cb)$. Then there is some parameter set, say $D$, such that every extension of $p$ to $D$ divides over $C$. In other words, $p$ implies a disjunction of types over $D$ such that every type in that disjunction divides over $C$. Classically we could even further reformulate this by using compactness and having $p$ actually imply a finite disjunction of dividing formulas, but that is not necessary and we want to avoid compactness in our definitions. So our definition of isi-forking will be the semantical way of saying ``implies a (possibly infinite) disjunction of types that each isi-divide''.
\begin{definition}
\thlabel{def:isi-forking}
We say that $\gtp(a, b, c; M)$ \emph{isi-forks} over $c$ if there is some extension $M \to N$ with $((a_j)_{j \in J}, (d_j)_{j \in J}; N)$ such that:
\begin{enumerate}[label=(\roman*)]
\item $\gtp(a_j, d_j, c; N)$ isi-divides over $c$ for each $j \in J$;
\item given an extension $N \to N'$ with some $(a'; N')$ such that $\gtp(a', b, c; N') = \gtp(a, b, c ; N)$ there is $j \in J$ such that $\gtp(a', d_j, c; N') = \gtp(a_j, d_j, c; N)$.
\end{enumerate}
\end{definition}
Note that we do not require that $b$ actually factors through the $d_j$. This is because we also want to force in \textsc{Right-Monotonicity}.

Of course, one could also define a notion of \emph{long forking} by replacing isi-dividing by long dividing in the above. However, we will have no use for this.
\begin{remark}
\thlabel{rem:isi-forking-vs-forking}
The definition of isi-forking is just the semantical way of saying ``$\gtp(a, b, c; M)$ implies a (possibly infinite) disjunction of Galois types that each isi-divide over $c$''. In the first-order setting and in the positive setting (see \cite{pillay_forking_2000}) forking has been defined and can be formulated as follows: a type forks over $C$ if it implies a (possibly infinite) disjunction of types that each divide over $C$. It should then be clear that forking implies isi-forking. This uses the fact that dividing implies isi-dividing, see \thref{rem:long-dividing-vs-dividing}(i). If isi-dividing and dividing coincide then the converse is true, so isi-forking would then imply forking.
\end{remark}
As before, we will prove various basic properties of isi-forking.
\begin{proposition}
\thlabel{prop:isi-forking-subobjects}
Let $A, B, C \leq M$ be subobjects. Let $(a, b, c; M)$ and $(a', b', c'; M)$ be two sets of representatives. Then $\gtp(a, b, c; M)$ isi-forks over $c$ if and only if $\gtp(a', b', c'; M)$ isi-forks over $c'$.
\end{proposition}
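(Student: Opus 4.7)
The plan is to mimic the proof of Proposition \ref{prop:long-dividing-subobjects}: two sets of representatives of the same subobjects $A, B, C$ differ by isomorphisms $f, g, h$ with $a' = af$, $b' = bg$, $c' = ch$, and we want to transport isi-forking witnesses across these isomorphisms using Fact \ref{fact:galois-types}(ii). The only subtlety beyond the dividing case is that isi-forking involves a witness family $((a_j)_{j\in J}, (d_j)_{j\in J}; N)$ whose ``left'' arrows $a_j$ must share the domain of $a$ in order for condition (ii) of Definition \ref{def:isi-forking} to make sense.

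Suppose $\gtp(a,b,c;M)$ isi-forks over $c$, witnessed by an extension $M \to N$ and $((a_j)_{j\in J}, (d_j)_{j\in J}; N)$. I would propose $((a_j f)_{j \in J}, (d_j)_{j \in J}; N)$ in the same extension $M \to N$ as the witness for isi-forking of $\gtp(a', b', c'; M)$ over $c'$. For clause (i), each $\gtp(a_j f, d_j, c'; N)$ isi-divides over $c'$ by Proposition \ref{prop:long-dividing-subobjects}, since $a_j f$ represents the same subobject as $a_j$ and $c'$ represents the same subobject as $c$.

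For clause (ii), suppose we are given an extension $N \to N'$ with $(\alpha; N')$ such that $\gtp(\alpha, b', c'; N') = \gtp(a', b', c'; N)$. Applying Fact \ref{fact:galois-types}(ii) with the isomorphisms $f^{-1}, g^{-1}, h^{-1}$ (and rewriting $a'f^{-1} = a$, $b'g^{-1} = b$, $c'h^{-1} = c$) yields $\gtp(\alpha f^{-1}, b, c; N') = \gtp(a, b, c; N)$. Then the isi-forking hypothesis on $\gtp(a, b, c; M)$ delivers some $j \in J$ with $\gtp(\alpha f^{-1}, d_j, c; N') = \gtp(a_j, d_j, c; N)$. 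Applying Fact \ref{fact:galois-types}(ii) one more time, composing with $f$ on the first coordinate and with $h$ on the third coordinate, recovers $\gtp(\alpha, d_j, c'; N') = \gtp(a_j f, d_j, c'; N)$, as required.

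There is no real obstacle here: the argument is pure bookkeeping with three isomorphisms, and the only item that requires any thought is the observation that the $a_j$ automatically share the domain of $a$ (forced by clause (ii) of Definition \ref{def:isi-forking}), which is what lets us compose them with $f$ to get witnesses with the correct domain on the ``primed'' side. The symmetric direction follows by swapping the roles of $(a,b,c)$ and $(a',b',c')$ and using $(f^{-1}, g^{-1}, h^{-1})$ in place of $(f,g,h)$.
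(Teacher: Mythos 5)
Your proof is correct and follows essentially the same route as the paper's: transport the isi-forking witness family by composing the left arrows with the isomorphism $f$, verify clause (i) via Proposition~\ref{prop:long-dividing-subobjects}, and verify clause (ii) by pushing a prospective realisation back through $f^{-1}, g^{-1}, h^{-1}$ and forward again. The one thing the paper leaves implicit and you spell out — that the relevant $a_j$'s share the domain of $a$, so $a_j f$ typechecks — is a fair point, though strictly speaking one only needs that the $a_j$'s ever selected in clause (ii) have this domain (and any others can be discarded from $J$ without loss).
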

\begin{proof}
We use the same trick as we did in \thref{prop:long-dividing-subobjects}. Let $f$ be the isomorphism such that $a' = af$. Let $M \to N$ be an extension with $((a_j)_{j \in J}, (d_j)_{j \in J}; N)$ witnessing isi-forking of $\gtp(a, b, c; M)$. Then using \thref{prop:long-dividing-subobjects} we have that $\gtp(a_j f, d_j, c'; N)$ isi-divides over $c'$ for all $j \in J$. We claim that isi-forking of $\gtp(a', b', c'; M)$ is witnessed by $((a_j f)_{j \in J}, (d_j)_{j \in J}; N)$, for which we are now left to check (ii) from \thref{def:isi-forking}.

Let $g$ and $h$ be isomorphisms such that $b' = bg$ and $c' = ch$. Let $N \to N^*$ be an extension with some $(a^*; N^*)$ such that $\gtp(a^*, b', c'; N^*) = \gtp(a', b', c'; N)$. Then
\begin{align*}
\gtp(a^* f^{-1}, b, c; N^*) &= \gtp(a^* f^{-1}, b' g^{-1}, c' h^{-1}; N^*) \\
&= \gtp(a' f^{-1}, b' g^{-1}, c' h^{-1}; N) \\
&= \gtp(a, b, c; N),
\end{align*}
so there is $j \in J$ with $\gtp(a^* f^{-1}, d_j, c; N^*) = \gtp(a_j, d_j, c; N)$. We conclude that $\gtp(a^*, d_j, c'; N^*) = \gtp(a^* f^{-1} f, d_j, c h; N^*) = \gtp(a_j f, d_j, c'; N)$.
\end{proof}
\begin{definition}
\thlabel{def:isi-forking-independence}
For $A, B, C \leq M$ we write $A \ind_C^{\isif,M} B$ if $\gtp(a, b, c; M)$ does not isi-fork for all (equivalently: some) representatives $a, b, c$ of $A, B, C$.
\end{definition}
\begin{proposition}
\thlabel{prop:basic-properties-isi-forking}
Isi-forking satisfies the following: \textsc{Invariance}, \textsc{Monotonicity} on both sides, \textsc{Extension} and \textsc{Base-Monotonicity}.
\end{proposition}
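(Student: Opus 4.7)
The plan is to verify each property in turn. \textsc{Invariance} is immediate: isi-forking is defined purely in terms of Galois types and isi-dividing (which is itself invariant), so the witnessing family $((a_j)_j, (d_j)_j; N)$ for $\gtp(a, b, c; M)$ can be transferred through a common amalgamation to witness isi-forking of $\gtp(a', b', c'; M')$ whenever $\gtp(a, b, c; M) = \gtp(a', b', c'; M')$. For \textsc{Right-Monotonicity}, suppose $\gtp(a, b', c; M)$ isi-forks, where $b' = bg$ represents a smaller subobject $B' \le B$. The \emph{same} witnesses suffice for $\gtp(a, b, c; M)$: any realization $a^*$ of $\gtp(a, b, c; M)$ is also a realization of $\gtp(a, b', c; M)$ by monotonicity of Galois types (\thref{fact:galois-types}(ii)), and condition (ii) transfers directly. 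This is precisely why the isi-forking definition does not require $b$ to factor through the $d_j$.

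For \textsc{Base-Monotonicity}, suppose $\gtp(a, b, c'; M)$ isi-forks over $c'$ with witnesses $((a_j)_j, (d_j)_j; N)$ and $c \le c' \le b$. I would tuple $c'$ into the witness and use $((a_j)_j, ((d_j, c'))_j; N)$. The witnessing isi-sequence $(d_{j,i})_{i < \lambda}$ over $c'$ for isi-dividing of $\gtp(a_j, d_j, c'; N)$ becomes, when tupled with a constant copy of $c'$ in each entry, an isi-sequence over $c$ witnessing isi-dividing of $\gtp(a_j, (d_j, c'), c; N)$, giving (i). For (ii), a realization $a^*$ of $\gtp(a, b, c; M)$ is also a realization of $\gtp(a, b, c'; M)$ by monotonicity of Galois types (using $c' \le b$), so the original (ii) provides $j$ with $\gtp(a^*, d_j, c') = \gtp(a_j, d_j, c')$, which upgrades to the tupled version by another application of \thref{fact:galois-types}(ii).

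For \textsc{Left-Monotonicity}, suppose $A' \le A$ is represented by $a' = ah$ and that $\gtp(ah, b, c; M)$ isi-forks with witness $((a_j)_j, (d_j)_j; N)$; the $a_j$'s have domain $\dom(ah)$ rather than $\dom(a)$, so lifts are required. For each $j$, index over the set of Galois types $p \in \Sgtp(\dom(a), \dom(d_j), \dom(c))$ whose restriction along $h$ equals $\gtp(a_j, d_j, c; N)$; this is a set by \thref{fact:galois-type-set}. Using \thref{fact:galois-types}(iii) and a transfinite amalgamation argument, realize each such $p$ as a pair $(a_{(j,p)}, d_j)$ in a single common extension $N^+$ of $N$. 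Condition (i) holds because isi-dividing is preserved under these lifts: a realization of $\gtp(a_{(j,p)}, d_j, c; N^+)$ along the isi-sequence witnessing isi-dividing of $\gtp(a_j, d_j, c; N)$ would, after composing with $h$, realize the original, contradicting isi-dividing. For (ii), given a realization $a^*$ of $\gtp(a, b, c; M)$, the composite $a^* h$ realizes $\gtp(ah, b, c; M)$, so the original (ii) yields $j$ with $\gtp(a^* h, d_j, c) = \gtp(a_j, d_j, c; N)$; then $p := \gtp(a^*, d_j, c)$ produces an index $(j, p)$ with the required matching.

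For \textsc{Extension}, I would argue by contradiction. Assume $a \ind_c^M b$, that $b$ factors through $b'$, but that no extension $M \to N$ with realization $a'$ of $\gtp(a, b, c; M)$ satisfies $a' \ind_c^N b'$. Then \emph{every} Galois type $p \in \Sgtp(\dom(a), \dom(b'), \dom(c))$ whose restriction to $(\dom(a), \dom(b), \dom(c))$ equals $\gtp(a, b, c; M)$ must isi-fork over $c$, and the set of such $p$ is a set. For each such $p$ pick a representative $(a_p, b_p, c_p; M_p)$ with $(a_p, b_p g, c_p) \sim_{gtp} (a, b, c; M)$ and isi-forking witnesses $((a_j^p)_j, (d_j^p)_j; N_p)$ in some $M_p \to N_p$. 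Using AP transfinitely, amalgamate all the $N_p$ over $M$ along the identifications $(a_p, b_p g, c_p) = (a, b, c)$ into a single extension $M \to N^*$ containing all witnesses; the combined family $((a_j^p)_{p,j}, (d_j^p)_{p,j}; N^*)$ then witnesses isi-forking of $\gtp(a, b, c; M)$. Condition (i) follows because isi-dividing is invariant under gtp equivalence, and for (ii) any realization $a^*$ is extended via \thref{fact:galois-types}(iii) to also realize some $b^{**}$ representing $b'$; the resulting $p^* := \gtp(a^*, b^{**}, c)$ is one of the indexed types, and applying (ii) of isi-forking of $p^*$ to $a^*$ itself provides the required index. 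The hardest step throughout is the AP-driven bookkeeping: both \textsc{Left-Monotonicity} and \textsc{Extension} require a transfinite amalgamation of set-many extensions while respecting pre-specified gtp-identifications, and correctly setting up and executing this construction is the main technical obstacle.
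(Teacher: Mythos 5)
Your proof follows the paper's overall structure — contraposition, collecting a \emph{set} of Galois types using \thref{fact:galois-type-set}, then transfinite amalgamation — and your arguments for \textsc{Invariance}, both \textsc{Monotonicity} directions and \textsc{Extension} match the paper's essentially verbatim (the paper simply declares \textsc{Invariance} and \textsc{Right-Monotonicity} ``direct from the definition''). The one genuine divergence is \textsc{Base-Monotonicity}. The paper reuses the unmodified witnesses $((a_j),(d_j);N)$ and checks only condition (ii) of \thref{def:isi-forking}, whereas you form new witnesses $((a_j),((d_j,c'));N)$ by tupling $c'$ into each $d_j$ and check both (i) and (ii). Your route is the more careful one: condition (i) for the unmodified witnesses --- that $\gtp(a_j,d_j,c;N)$ isi-divides over $c$ --- does not follow from $\gtp(a_j,d_j,c';N)$ isi-dividing over $c'$ alone, since \textsc{Base-Monotonicity} for isi-dividing (\thref{prop:basic-properties-long-dividing-and-isi-dividing}) requires $c'\le d_j$, a factorisation which the isi-forking definition does not supply (we are only given $c'\le b$). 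Tupling $c'$ into $d_j$ restores the needed inclusion, and your check that the isi-sequence over $c'$ with a constant copy of $c'$ appended is still an isi-sequence over $c$ is correct. The only caveat is presentational: \thref{def:isi-forking} and \thref{def:long-dividing} use single arrows in the $d_j$ and $b$ slots, so to make the tupling literal one should either note that the dividing machinery extends verbatim to tuples of arrows, or replace $(d_j,c')$ by a single arrow through which both $d_j$ and $c'$ factor.
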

\begin{proof}
The properties \textsc{Invariance} and \textsc{Right-Monotonicity} are direct from the definition. We prove the contraposition of the remaining three.

For \textsc{Left-Monotonicity} suppose that $\gtp(a', b, c; M)$ isi-forks over $c$ and let $(a; M)$ be such that $a'$ factors through $a$. Let $((a'_j)_{j \in J}, (d_j)_{j \in J}; N)$ in some extension $M \to N$ witness the isi-forking. Let $f$ be such that $af = a'$. The following is a set by \thref{fact:galois-type-set}:
\begin{align*}
F = \{\gtp(a^*, d_j, c; N^*) :\,  &N^* \text{ is an extension of } N \text{ and } j \in J \text{ and} \\
&\gtp(a^*, b, c; N^*) = \gtp(a, b, c; M) \text{ and } \\
&\gtp(a^* f, d_j, c; N^*) = \gtp(a_j', d_j, c; N)\}.
\end{align*}
By \textsc{Left-Monotonicity} of isi-dividing, every Galois type in $F$ isi-divides over $c$. By inductively amalgamating things we find one extension $N \to N^*$ with $((a_k)_{k \in K}, (d_k)_{k \in K}; N^*)$  such that every Galois type in $F$ is realised by $(a_k, d_k, c; N^*)$ for some $k \in K$. This then witnesses isi-forking of $\gtp(a, b, c; M)$ over $c$.

For \textsc{Extension} let $(a, b, b', c; M)$ be such that $b$ factors through $b'$ and for every $(a'; N)$ in some extension $M \to N$ with $\gtp(a', b, c; N) = \gtp(a, b, c; M)$ we have that $\gtp(a', b', c; N)$ isi-forks over $c$. So the conclusion of the \textsc{Extension} property for $\gtp(a, b, c; M)$ fails. We have to prove that then $\gtp(a, b, c; M)$ isi-forks over $c$.

For each Galois type in $\Sgtp(\dom(a), \dom(b'), \dom(c))$ we fix some witnesses of isi-forking. By \thref{fact:galois-type-set} and the definition of isi-forking, the following is a set:
\begin{align*}
F = \{&\gtp(a', d, c; N) : N \text{ is an extension of } M \text{ and}\\
&\quad \gtp(a', b, c; N) = \gtp(a, b, c; M) \text{ and } \\
&\quad \gtp(a', d, c; N) \text{ is a fixed witness of isi-forking of } \gtp(a', b', c; N)\}.
\end{align*}
By inductively amalgamating things we find one extension $N \to N^*$ together with $((a_j)_{j \in J}, (d_j)_{j \in J}; N^*)$ such that every Galois type in $F$ is realised by $(a_j, d_j, c; N^*)$ for some $j \in J$. This then witnesses isi-forking of $\gtp(a, b, c; M)$ over $c$.

Finally, for \textsc{Base-Monotonicity} let $(a, b, c, c'; M)$ be such that $\gtp(a, b, c'; M)$ isi-forks over $c'$ and $C \leq C' \leq B$, where $C, C', B$ are the subobjects represented by $c, c', b$ respectively. Let $((a_j)_{j \in J}, (d_j)_{j \in J}; N)$ witness this in some extension $M \to N$. We claim that this also witnesses isi-forking of $\gtp(a, b, c; M)$ over $c$. Indeed, let $a': A \to N'$ for some extension $N \to N'$ be such that $\gtp(a', b, c; N') = \gtp(a, b, c; N)$. We have $C' \leq B$, so $\gtp(a', b, c'; N') = \gtp(a, b, c'; N)$. So there must be some $j \in J$ such that $\gtp(a', d_j, c'; N') = \gtp(a_j, d_j, c'; N)$. As $C \leq C'$ this restricts to $\gtp(a', d_j, c; N') = \gtp(a_j, d_j, c; N)$, which concludes the proof.
\end{proof}
\begin{proposition}
\thlabel{prop:isi-dividing-vs-isi-forking}
For any $A, B, C \leq M$ we always have
\[
A \ind_C^{\isif,M} B \implies A \ind_C^{\isid,M} B.
\]
The converse holds if and only if isi-dividing satisfies \textsc{Right-Monotonicity} and \textsc{Extension}.
\end{proposition}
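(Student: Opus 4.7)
The plan is to prove the implication first, then both directions of the biconditional. For the implication $A \ind_C^{\isif, M} B \Rightarrow A \ind_C^{\isid, M} B$, I would argue the contrapositive: if $\gtp(a, b, c; M)$ isi-divides over $c$, then it isi-forks over $c$. This is witnessed trivially by the singleton $J = \{\ast\}$ with $a_\ast = a$, $d_\ast = b$, and the extension $M \to N$ taken to be the identity on $M$; condition (i) of \thref{def:isi-forking} is then exactly the hypothesis and condition (ii) is tautological.

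The easy direction of the biconditional, that coincidence of isi-dividing with isi-forking forces isi-dividing to satisfy \textsc{Right-Monotonicity} and \textsc{Extension}, is immediate from \thref{prop:basic-properties-isi-forking}. The nontrivial direction is: assuming isi-dividing satisfies \textsc{Right-Monotonicity} and \textsc{Extension}, isi-forking implies isi-dividing. I would proceed by contradiction. Suppose $\gtp(a, b, c; M)$ isi-forks, witnessed by $((a_j)_{j \in J}, (d_j)_{j \in J}; N)$ in some extension $M \to N$, yet $a \ind_c^{\isid, M} b$. Since isi-dividing is a property of the Galois type alone (any extension of $M$ can be amalgamated with $N$), we still have $a \ind_c^{\isid, N} b$ after viewing $a, b, c$ as arrows into $N$.

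Now I would apply the assumed \textsc{Extension} for isi-dividing with $b' = \mathrm{id}_N$ (through which $b$ visibly factors) to obtain an extension $N \to N^*$ and $a^* \colon A \to N^*$ with $\gtp(a^*, b, c; N^*) = \gtp(a, b, c; N)$ and $a^* \ind_c^{\isid, N^*} N$. Condition (ii) of isi-forking then supplies some $j \in J$ with $\gtp(a^*, d_j, c; N^*) = \gtp(a_j, d_j, c; N)$, and condition (i) tells us this Galois type isi-divides. On the other hand, $d_j$ factors through the extension $N \to N^*$, so the assumed \textsc{Right-Monotonicity} applied to $a^* \ind_c^{\isid, N^*} N$ yields $a^* \ind_c^{\isid, N^*} d_j$, contradicting the isi-dividing of $\gtp(a^*, d_j, c; N^*)$.

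The main obstacle is organisational rather than technical: recognising that a single application of \textsc{Extension} with the maximal arrow $b' = \mathrm{id}_N$ simultaneously controls every potential forking witness $d_j$ once \textsc{Right-Monotonicity} is on the table, which is exactly what lets the two assumed properties of isi-dividing combine into a contradiction with isi-forking.
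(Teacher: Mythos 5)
Your proposal is correct and takes essentially the same approach as the paper: the trivial witness $(a, b)$ shows isi-dividing implies isi-forking, the easy direction of the biconditional follows from \thref{prop:basic-properties-isi-forking}, and the nontrivial direction argues by contradiction using \textsc{Extension} applied to $b' = \mathrm{id}_N$ followed by condition (ii) of isi-forking and \textsc{Right-Monotonicity}. Your write-up just makes a couple of steps more explicit (notably that non-isi-dividing transfers from $M$ to $N$ by invariance) than the paper's more compressed version.
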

\begin{proof}
The first implication is just the contrapositive of the trivial statement that isi-dividing implies isi-forking. If the converse of this implication holds, then isi-dividing and isi-forking coincide and so isi-dividing satisfies \textsc{Right-Monotonicity} and \textsc{Extension} by \thref{prop:basic-properties-isi-forking}.

We are left to prove that if isi-dividing satisfies \textsc{Right-Monotonicity} and \textsc{Extension} then isi-forking implies isi-dividing. Suppose for a contradiction that $\gtp(a, b, c; M)$ isi-forks over $c$ but does not isi-divide over $c$. Let $((a_j)_{j \in J}, (d_j)_{j \in J}; N)$ for some extension $M \to N$ witness the isi-forking of $\gtp(a, b, c; M)$. By \textsc{Extension} for isi-dividing we find an extension $N \to N'$ with $(a'; N')$ such that $\gtp(a', b, c; N') = \gtp(a, b, c; N)$ and $\gtp(a', N, c; N')$ does not isi-divide. By isi-forking, there must be $j \in J$ such that $\gtp(a', d_j, c; N')$ isi-divides over $c$ contradicting \textsc{Right-Monotonicity} of isi-dividing.
\end{proof}
When considering NSOP$_1$-theories in first-order logic the useful notion of independence is given by Kim-dividing, see for example \cite{kaplan_kim-independence_2020, dobrowolski_independence_2022}. The idea is to only consider dividing with respect to Morley sequences, that is, with respect to indiscernible nonforking sequences. We adapt that definition to our earlier ideas.
\begin{definition}
\thlabel{def:long-kim-dividing}
We say that $\gtp(a, b, c; M)$ \emph{long Kim-divides} over $c$ if it long divides over $c$ with respect to $\ind_c^{\isif}$-independent sequences. That is, the definition is exactly as long dividing, but we require the sequence $(b_i)_{i < \lambda}$ to be $\ind_c^{\isif}$-independent. We write $A \ind_C^{\lK, M} B$ if $\gtp(a, b, c; M)$ does not long Kim-divide over $c$ for all (equivalently: some) representatives $a, b, c$ of the subobjects $A, B, C$.
\end{definition}
We implicitly used a combination of \thref{prop:long-dividing-subobjects} and \thref{prop:isi-forking-subobjects} to conclude that long Kim-dividing is invariant under taking different representatives of subobjects.

We have defined $\ind_C^\lK$-independence using $\ind_C^\isif$-independent sequences, but these may not exist. For this we define the following axiom, from which the existence of such sequences follows.
\begin{definition}
\thlabel{def:existence-axiom}
Let $(\C, \M)$ be an AECat and let $\B$ be a base class. We say that $(\C, \M)$ satisfies the \emph{$\B$-existence axiom} if $\ind^\isif$ with its base restricted to $\B$ satisfies \textsc{Existence}. That is, for all $A, C \leq M$ with $C \in \B$ we have $A \ind_C^{\isif,M} C$.
\end{definition}
\begin{corollary}
\thlabel{cor:existence-axiom-gives-independent-sequences}
If $(\C, \M)$ satisfies the $\B$-existence axiom then for any $(a, c; M)$ with $\dom(c) \in \B$ and any $\kappa$ there is some extension $M \to N$ containing a $\ind_c^\isif$-independent sequence $(a_i)_{i < \kappa}$ with $\gtp(a_i, c; N) = \gtp(a, c; M)$ for all $i < \kappa$.
\end{corollary}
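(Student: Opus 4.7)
The plan is to invoke \thref{prop:build-independent-sequence} with the independence relation $\ind^\isif$, its base restricted to $\B$. That proposition delivers exactly the sequence we want, provided the three hypotheses \textsc{Invariance}, \textsc{Existence}, and \textsc{Extension} hold for $\ind^\isif$ over $\B$. So the whole task reduces to checking those three properties in the present setting.

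For \textsc{Invariance} and \textsc{Extension} I would just quote \thref{prop:basic-properties-isi-forking}, which lists both among the basic properties of isi-forking. These do not depend on the restriction of the base to $\B$, so they transfer to $\ind^\isif$ over $\B$ without any extra argument. For \textsc{Existence}, this is precisely the content of \thref{def:existence-axiom}: the assumption that $(\C, \M)$ satisfies the $\B$-existence axiom is literally the statement that $\ind^\isif$, with base restricted to $\B$, has \textsc{Existence}.

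With these three properties in hand, an application of \thref{prop:build-independent-sequence} to the arrow $(a, c; M)$ with $\dom(c) \in \B$ and the prescribed length $\kappa$ produces an extension $M \to N$ together with a $\ind_c^{\isif}$-independent sequence $(a_i)_{i < \kappa}$ in $N$ satisfying $\gtp(a_i, c; N) = \gtp(a, c; M)$ for every $i < \kappa$, which is exactly the required conclusion. There is no real obstacle: the $\B$-existence axiom was introduced precisely so that \textsc{Existence}, the only missing ingredient for the standard construction of independent sequences, becomes available for isi-forking over $\B$.
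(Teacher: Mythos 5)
Your proposal is correct and matches the paper's proof exactly: the paper simply cites \thref{prop:build-independent-sequence} together with \thref{prop:basic-properties-isi-forking}, and you have supplied precisely the verification that the three hypotheses of the former are met, with \textsc{Existence} coming from the $\B$-existence axiom as you note. Nothing to add.
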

\begin{proof}
Combine \thref{prop:build-independent-sequence} and \thref{prop:basic-properties-isi-forking}.
\end{proof}
\begin{example}
\thlabel{ex:b-existence}
We discuss some examples of the $\B$-existence axiom. These are either settings where we have the axiom, or where it is natural to assume the axiom.
\begin{enumerate}[label=(\roman*)]
\item For any first-order theory $T$ we have the $\Mod(T)$-existence axiom. This is because any type over a model can be extended to a global invariant type, which can then be used in a standard argument to show that such a type does not isi-fork over $M$ as follows.

Let $q(x) \supseteq \tp(a/M)$ be a global $M$-invariant extension. Let $\alpha \models q$, which then lives in some bigger monster model. Suppose that $\tp(a/M)$ isi-forks. Then there is $d$ such that $r(x, d) = \tp(\alpha / Md)$ isi-divides over $M$. However, we will show that $r(x, d)$ cannot even long divide. So let $(d_i)_{i < \lambda}$ be any infinite sequence in $\tp(d/M)$. Then by $M$-invariance $\tp(\alpha d_i/M) = \tp(\alpha d/M)$ for all $i < \lambda$. So $\bigcup_{i < \lambda} r(x, d_i)$ is consistent, and so we conclude that $r(x, d)$ does not long divide.

\item Analogous to the previous point, for a continuous theory $T$ we have the $\MetMod(T)$-existence axiom.

\item For positive logic something similar to (i) is true, but we need an extra assumption on $T$. We recall these assumptions in more detail in \thref{def:thick-semi-hausdorff}. For now we just summarise what we get from them in terms of the existence axiom. In a semi-Hausdorff positive theory $T$ any type over an e.c.\ model can be extended to a global invariant type (see \cite[Lemma 3.11]{ben-yaacov_thickness_2003}), so the proof in (i) goes through and we have the $\Mod(T)$-existence axiom. In the more general class of thick theories we still have the $\Mod(T)$-existence axiom, but we have to use global Lascar-invariant types instead, see \cite[Lemma 3.11 and Lemma 9.11]{dobrowolski_kim-independence_2022}.

\item For an NSOP$_1$ theory $T$ in first-order logic it is common to assume the existence axiom for forking. It is still an open problem whether or not the existence axiom for forking holds in every NSOP$_1$ theory $T$, but it has been proved in many specific instances, see \cite[Fact 2.14]{dobrowolski_independence_2022}.

If for such $T$ we take $(\C, \M) = (\SubMod(T), \Mod(T))$ then we are very close to having the $\C$-existence axiom. The only difference is that we work with isi-forking, see \thref{rem:isi-forking-vs-forking} for a comparison. In particular, the $\C$-existence axiom implies the existence axiom for forking. Furthermore, if there is a proper class of Ramsey cardinals then isi-forking and forking coincide and so the converse would hold as well. Additionally, it is quite likely that techniques to prove existence for forking also work for isi-forking. For example, in \cite[Remark 2.15]{dobrowolski_independence_2022} it is shown that in the theory of parametrised equivalence relations any type over any set $A$ can be extended to a global $A$-invariant type. Following point (i) we then see that such a type does not isi-fork over $A$.

\item If $(\C, \M)$ is an AECat with a simple independence relation $\ind$ then it will satisfy the $\base(\ind)$-existence axiom. This follows from canonicity, \thref{thm:canonicity-of-simple-independence}, because then $\ind = \ind^{\isif}$ over $\base(\ind)$. This mirrors the fact that simple theories in first-order logic (and even simple thick positive theories, see \cite{ben-yaacov_thickness_2003}) have the existence axiom for forking, see also the previous point.
\end{enumerate}
\end{example}
\begin{remark}
\thlabel{rem:long-kim-dividing-vs-kim-dividing}
The usual definition of Kim-dividing states that a type Kim-divides if it divides with respect to non-forking Morley sequences, see e.g.\ \cite{kaplan_kim-independence_2020, dobrowolski_independence_2022}. To compare this to long Kim-dividing we first note that by \thref{rem:isi-forking-vs-forking} any $\ind^{\isif}$-independent sequence is also a forking-independent sequence, and the converse is true if isi-dividing coincides with dividing. As before, if we assume that there is a proper class of Ramsey cardinals then long Kim-dividing and Kim-dividing coincide, using the same arguments as for long dividing and isi-dividing versus dividing.

If we do not want to assume large cardinals then we can again use canonicity, this time \thref{thm:canonicity-of-nsop1-like-independence}, to see that long Kim-dividing and Kim-dividing coincide in NSOP$_1$-theories where it has been developed.
\end{remark}
\section{Canonicity}
\label{sec:canonicity}
In this section we prove the main results, namely the canonicity theorems for simple (\thref{thm:canonicity-of-simple-independence}) and NSOP$_1$-like (\thref{thm:canonicity-of-nsop1-like-independence}) independence relations. The former is just a slightly improved version of \cite[Theorem 1.1]{kamsma_kim-pillay_2020}. The results in this section up to and including \thref{thm:canonicity-of-simple-independence} are then essentially just the proof of \cite[Theorem 1.1]{kamsma_kim-pillay_2020} cut up in smaller parts. However, we cannot really just refer to that proof again. First of all because we work with slightly different definitions. More importantly, the results here are actually improved versions. Most notably \thref{lem:independence-theorem-lemma} gives a lot more information, where the proof of \cite[Theorem 1.1]{kamsma_kim-pillay_2020} only used the last sentence of that lemma.
\begin{theorem}
\thlabel{thm:isi-dividing-implies-abstract-independence}
Let $(\C, \M)$ be an AECat with AP and let $\ind$ be a basic independence relation that also satisfies \textsc{Club Local Character}. Then $A \ind_C^{\isid, M} B$ implies $A \ind_C^M B$ for any $C \in \base(\ind)$.

If $\ind$ satisfies the same assumptions, except possibly \textsc{Union}, then we still have that $A \ind_C^{\ld, M} B$ implies $A \ind_C^M B$ for any $C \in \base(\ind)$.
\end{theorem}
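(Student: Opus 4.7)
My approach is the contrapositive: assuming $a \nind_c^M b$, I will construct, for each sufficiently large regular $\lambda$, a $\ind_c$-independent isi-sequence (resp.\ $\ind_c$-independent sequence) of length $\lambda$ that witnesses isi-dividing (resp.\ long dividing) of $\gtp(a,b,c;M)$ over $c$. First I fix $\mu_0$ so that $(\C,\M)$ is a $\mu_0$-AECat and $\dom(a),\dom(b),\dom(c)$ are all $\mu_0$-presentable, and I set $\kappa = \Upsilon(A) + \mu_0$; this $\kappa$ will play the role of the witnessing cardinal in the definition of isi-/long dividing. For each regular $\lambda > \kappa$, \thref{prop:build-independent-isi-sequence} (resp.\ \thref{prop:build-independent-sequence}) supplies, in some extension $M \to N$, a $\ind_c$-independent isi-sequence (resp.\ $\ind_c$-independent sequence) $(b_i)_{i<\lambda}$ with $\gtp(b_i,c;N) = \gtp(b,c;M)$ for all $i$, together with a continuous chain of witnesses $(N_i)_{i<\lambda}$.

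The claim will be that this sequence witnesses isi-dividing (resp.\ long dividing) with cardinal $\kappa$. Suppose not: then there are $I \subseteq \lambda$ with $|I|=\kappa$, an extension $N \to N'$, and $a':\dom(a) \to N'$ such that $\gtp(a',b_i,c;N')=\gtp(a,b,c;M)$ for each $i\in I$. I enumerate $I = \{i_\beta : \beta<\kappa\}$ in increasing order and build inductively a continuous chain $(N^*_\beta)_{\beta<\kappa}$ of $\kappa$-presentable models with $N^*_\beta \leq N_{i_\beta}$: at the base, pick a $\kappa$-presentable $N^*_0 \leq N_{i_0}$ through which $c$ factors; at successors, pick a $\kappa$-presentable $N^*_{\beta+1} \leq N_{i_{\beta+1}}$ containing $N^*_\beta$ and through which $b_{i_\beta}$ factors; at limits, set $N^*_\ell = \colim_{\gamma<\ell} N^*_\gamma$, which is again $\kappa$-presentable (as a $<\kappa$-colimit of $\kappa$-presentable objects) and embeds into $N_{i_\ell}$ by the universal property of the colimit. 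Such choices exist by $\kappa$-accessibility of $\M$ (via \thref{prop:kappa-aecat-and-above}), since each $N_{i_\beta}$ is a $\kappa$-directed colimit of its $\kappa$-presentable sub-models and $\dom(c)$, $N^*_\beta$, $\dom(b_{i_\beta})$ are all $\kappa$-presentable. From $b_{i_\beta}\ind_c^N N_{i_\beta}$, which lifts to $b_{i_\beta}\ind_c^{N'} N_{i_\beta}$ by \textsc{Invariance}, combined with $N^*_\beta \leq N_{i_\beta}$ and \textsc{Symmetry}+\textsc{Monotonicity}, I obtain $b_{i_\beta}\ind_c^{N'} N^*_\beta$ for every $\beta < \kappa$.

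Chain local character (\thref{lem:chain-local-character}) applied to $A\leq N'$ with the continuous chain $(N^*_\beta)_{\beta<\kappa}$ of $\kappa$-presentable models (legitimate because $\kappa \geq \Upsilon(A)+\mu_0$) then produces some $\beta_0<\kappa$ with $a' \ind_{N^*_{\beta_0}}^{N'} \colim_{\beta<\kappa} N^*_\beta$. Since $b_{i_{\beta_0}}$ factors through $N^*_{\beta_0+1}$ and hence through the colimit, \textsc{Symmetry} and \textsc{Monotonicity} give $a' \ind_{N^*_{\beta_0}}^{N'} b_{i_{\beta_0}}$. Combining this with $b_{i_{\beta_0}}\ind_c^{N'} N^*_{\beta_0}$ via \textsc{Symmetry} and \textsc{Transitivity} (applicable since $c\leq N^*_{\beta_0}$) yields $a' \ind_c^{N'} b_{i_{\beta_0}}$; since $\gtp(a',b_{i_{\beta_0}},c;N')=\gtp(a,b,c;M)$, \textsc{Invariance} forces $a\ind_c^M b$, contradicting the assumption. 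The long-dividing case is identical except that one uses \thref{prop:build-independent-sequence} in place of \thref{prop:build-independent-isi-sequence}, which removes the need for \textsc{Union}; the isi property is not invoked anywhere in the argument.

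The main obstacle I anticipate is the construction of the continuous $\kappa$-presentable sub-chain $(N^*_\beta)$: the naive restriction $(N_{i_\beta})_{\beta<\kappa}$ of the original chain need be neither continuous nor $\kappa$-presentable, so the sub-chain must be rebuilt inside $N$ from scratch using $\kappa$-accessibility, while simultaneously arranging $N^*_\beta \leq N_{i_\beta}$ so that the independence $b_{i_\beta}\ind_c^{N'} N^*_\beta$ follows from the original witnesses by a single application of \textsc{Monotonicity}.
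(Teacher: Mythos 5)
Your proof is correct and follows essentially the same approach as the paper's: build an arbitrarily long $\ind_c$-independent (isi-)sequence via \thref{prop:build-independent-isi-sequence} (resp. \thref{prop:build-independent-sequence}), extract a realisation $a'$ on a subsequence of length $\kappa$, shrink the witnessing chain to a continuous chain of $\kappa$-presentable models, invoke chain local character, and close with \textsc{Monotonicity}, \textsc{Symmetry} and \textsc{Transitivity}. The only differences are cosmetic: you phrase it as the contrapositive with an inner contradiction rather than the paper's direct argument, you rebuild the $\kappa$-presentable subchain by hand where the paper cites \thref{lem:independence-witnesses-presentability}, and a very minor slip — writing $I=\{i_\beta:\beta<\kappa\}$ presumes $I$ already has order type $\kappa$, so you should first pass to the initial segment of $I$ of order type $\kappa$, exactly as the paper does.
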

\begin{proof}
Suppose that $\gtp(a, b, c; M)$ does not isi-divide over $c$. Let $\kappa \geq \Upsilon(A)$ such that $(\C, \M)$ is a $\kappa$-AECat and $\dom(a)$ and $\dom(c)$ are $\kappa$-presentable. By \thref{prop:build-independent-isi-sequence} we find a long enough $\ind_c$-independent isi-sequence $(b_i)_{i < \lambda}$ over $c$ in some $M \to N$ with $\lambda > \kappa$ and $\gtp(b_i, c; N) = \gtp(b, c; M)$ for all $i < \lambda$. Let $(M_i)_{i < \lambda}$ be witnesses of independence. Since $\gtp(a, b, c; M)$ does not isi-divide over $c$ there is $I \subseteq \lambda$ with $|I|  = \kappa$ such that $\gtp(a, b, c; M)$ is consistent for $(b_i)_{i \in I}$. Let $a'$ be a realisation for $(b_i)_{i \in I}$, which for convenience we may assume to be in $N$. By possibly deleting an end segment from $I$ we may assume that $I$ has the order type of $\kappa$. Using \thref{lem:independence-witnesses-presentability} we may assume that each object in the chain $(M_i)_{i \in I}$ is $\kappa$-presentable, where \textsc{Monotonicity} implies that these are still witnesses of independence. Then by chain local character, \thref{lem:chain-local-character}, we find $i_0 \in I$ such that $a' \ind_{M_{i_0}}^N M_I$ where $M_I = \colim_{i \in I} M_i$. By \textsc{Monotonicity} and \textsc{Symmetry} we then have
\[
b_{i_0} \ind_{M_{i_0}}^N a'.
\]
We also have
\[
b_{i_0} \ind_c^N M_{i_0}.
\]
So by \textsc{Transitivity} we have $b_{i_0} \ind_c^N a'$ and the result follows by \textsc{Symmetry} and because $\gtp(a', b_{i_0}, c; N) = \gtp(a, b, c; M)$.

For the final claim we just note that if we do not have \textsc{Union} we can still apply \thref{prop:build-independent-sequence} instead of \thref{prop:build-independent-isi-sequence} to get an arbitrarily long $\ind_c$-independent sequence. It might just not be an isi-sequence. Then the rest of the proof goes through as written.
\end{proof}
The following lemma generalises the \textsc{Independence Theorem} property to independent sequences of any length. The original \textsc{Independence Theorem} can roughly be viewed as just considering an independent sequence of length two.
\begin{lemma}[Generalised independence theorem]
\thlabel{lem:independence-theorem-lemma}
Suppose that $\ind$ is a basic independence relation satisfying \textsc{Independence Theorem}. Let $\delta$ be any (possibly finite) ordinal. Suppose we have $(a, b, c; N)$ such that $a \ind_c^N b$ and a $\ind_c$-independent sequence $(b_i)_{i < \delta}$ in $N$ with $\Lgtp(b_i / c; N) = \Lgtp(b / c; N)$ for all $i < \delta$. Then there is an extension $N \to N'$ with $(a'; N')$ such that $a' \ind_c^{N'} N$ and $\Lgtp(a', b_i / c; N') = \Lgtp(a, b / c; N')$ for all $i < \delta$.

In particular, $\gtp(a, b, c; N)$ is consistent for $(b_i)_{i < \delta}$.
\end{lemma}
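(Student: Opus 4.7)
The plan is transfinite induction on $\gamma\le\delta$, followed by a final application of \thref{cor:strong-extension-dual}. Let $(M_i)_{i<\delta}$ be witnesses of $\ind_c$-independence for $(b_i)_{i<\delta}$, write $\bar b_\gamma:=(b_i)_{i<\gamma}$, and note that by continuity of the chain of initial segments, $\bar b_\gamma$ factors through $M_\gamma$ whenever $\gamma>0$. At each stage $\gamma$ I aim to produce an extension $N\to N_\gamma$ and an arrow $\alpha_\gamma$ into $N_\gamma$ satisfying
\[
\alpha_\gamma\ind_c^{N_\gamma}\bar b_\gamma
\quad\text{and}\quad
\Lgtp(\alpha_\gamma,b_i/c;N_\gamma)=\Lgtp(a,b/c;N_\gamma)\ \text{for all } i<\gamma.
\]
After stage $\delta$, \thref{cor:strong-extension-dual} applied to $\alpha_\delta\ind_c^{N_\delta}\bar b_\delta$, with the arrow $d$ taken to represent the embedding $N\to N_\delta$, produces $a'$ in a further extension $N'$ with $a'\ind_c^{N'}N$ and $\Lgtp(a'/\bar b_\delta,c;N')=\Lgtp(\alpha_\delta/\bar b_\delta,c;N')$. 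Restriction via \thref{prop:lascar-strong-galois-types-basic-observations}(i) and the routine observation that an equality of Lascar types $\Lgtp(-/b_i,c)$ for a fixed $b_i$ transports to an equality $\Lgtp(-,b_i/c)$ (carry each $\sim_\Lgtp$-step through, using that $b_i$ factors through the witnessing model), combined with the inductive Lascar condition, yields $\Lgtp(a',b_i/c;N')=\Lgtp(a,b/c;N')$. The ``in particular'' clause is immediate since $\Lgtp=$ implies $\gtp=$.

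The base case $\gamma=0$ is trivial: set $\alpha_0:=a$, $N_0:=N$, with \textsc{Existence} handling the otherwise vacuous independence. For the successor $\gamma_0\to\gamma_0+1$ I first build a companion arrow $\tilde\alpha$ in some extension $N_{\gamma_0}\to N^*$ with $\tilde\alpha\ind_c^{N^*}b_{\gamma_0}$ and $\Lgtp(\tilde\alpha,b_{\gamma_0}/c;N^*)=\Lgtp(a,b/c;N^*)$, by walking along the finite $\sim_\Lgtp$-chain witnessing $\Lgtp(b/c;N)=\Lgtp(b_{\gamma_0}/c;N)$ and iteratively applying \textsc{Invariance} together with amalgamation to transport $a$. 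I then apply \textsc{Independence Theorem} in $N^*$ with $a_1:=\alpha_{\gamma_0}$, $b_1:=\bar b_{\gamma_0}$ and $a_2:=\tilde\alpha$, $b_2:=b_{\gamma_0}$. The premise $\bar b_{\gamma_0}\ind_c^{N^*}b_{\gamma_0}$ follows from the given $b_{\gamma_0}\ind_c^N M_{\gamma_0}$ by \textsc{Monotonicity} (since $\bar b_{\gamma_0}$ factors through $M_{\gamma_0}$) and \textsc{Symmetry}, while $\Lgtp(\alpha_{\gamma_0}/c;N^*)=\Lgtp(\tilde\alpha/c;N^*)$ holds because both restrictions equal $\Lgtp(a/c;N^*)$. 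The output $\alpha^*=:\alpha_{\gamma_0+1}$ then realises the inductive invariants at stage $\gamma_0+1$, using \textsc{Monotonicity} for the independence and restricting the two Lascar equalities output by \textsc{Independence Theorem} (the first giving the Lascar conditions for $i<\gamma_0$ via induction, the second giving the new condition for $i=\gamma_0$).

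The limit step $\gamma$ is the main obstacle. Because each successor step produces a genuinely new arrow $\alpha_{\beta+1}$ that is only Lascar-equivalent to $\alpha_\beta$ over $(\bar b_\beta,c)$, and not literally equal as an arrow, there is no literal colimit of the $\alpha_\beta$. What does survive is a coherent system of Lascar-type equalities: iterated restriction of the telescoping outputs $\Lgtp(\alpha_{\beta+1},\bar b_\beta/c)=\Lgtp(\alpha_\beta,\bar b_\beta/c)$ yields $\Lgtp(\alpha_\beta,\bar b_{\beta'}/c)=\Lgtp(\alpha_{\beta'},\bar b_{\beta'}/c)$ for all $\beta'\le\beta<\gamma$. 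Setting $N_\gamma:=\colim_{\beta<\gamma}N_\beta$, the plan is to amalgamate the chains of $\sim_\Lgtp$-witnesses for these equalities coherently inside a further extension, extracting a single arrow $\alpha_\gamma$ realising every member of the coherent system. The independence $\alpha_\gamma\ind_c^{N_\gamma}\bar b_\gamma$ then follows from the independences $\alpha_\gamma\ind_c\bar b_\beta$ (for each $\beta<\gamma$) by \textsc{Union} applied to the directed system $(\bar b_\beta)_{\beta<\gamma}$, whose colimit is $\bar b_\gamma$.
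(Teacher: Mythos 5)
Your limit step has a genuine gap, and you in fact flag it yourself as ``the main obstacle'' without resolving it. The problem is that the arrows $\alpha_\beta$ produced at each successor stage do \emph{not} form a coherent cocone: $\alpha_{\beta+1}$ is only Lascar-equivalent to $\alpha_\beta$ over $(\bar b_\beta, c)$, not an arrow through which the previous one factors. Your proposal to ``amalgamate the chains of $\sim_\Lgtp$-witnesses\ldots extracting a single arrow $\alpha_\gamma$ realising every member of the coherent system'' is exactly the problem of realising a coherent directed system of (Lascar) Galois types. In a compact setting (first-order logic) that is fine, but AECats carry no such realisation principle: in general a compatible chain of Galois types over an increasing chain of parameter objects need not have a common realisation, and none of the axioms of a basic independence relation (nor \textsc{Independence Theorem}) supply one. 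So the phrase ``extracting a single arrow $\alpha_\gamma$'' is a restatement of the difficulty rather than a solution.

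The paper sidesteps this entirely by running the dual construction: it fixes a single arrow $a''$ at the very start and never replaces it. What gets re-embedded stage by stage is not $a$ but copies $m_i'$ of the chain $(M_i)$, subject to the naturality condition (that the $m_j'$ commute with the chain). Because the $m_i'$ form a genuine coherent chain of arrows, the limit step is painless: take $N_\ell = \colim_{i<\ell} N_i$, use continuity of $(M_i)$ to obtain $m_\ell'$ from the universal property, and get $a'' \ind_c^{N_\ell} m_\ell'$ from \textsc{Union}, since the right-hand side (not the left) is what is growing. This is the key structural asymmetry your proposal misses: \textsc{Union} lets the right side of $\ind$ grow along a colimit, but nothing lets you pass to a limit on the left when the left-hand arrows are being replaced rather than extended. (A secondary technicality: the independence relation and \textsc{Independence Theorem} are stated for single arrows representing subobjects, so writing $\alpha_\gamma \ind_c \bar b_\gamma$ and applying \textsc{Independence Theorem} with $b_1 := \bar b_{\gamma_0}$, a tuple, is not literally admissible; the paper's use of the submodel arrows $m_i'$ fixes this as well.)
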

\begin{proof}
Let $(M_i)_{i < \delta}$ be witnesses of independence for $(b_i)_{i < \delta}$, which we may assume to be subobjects of $N$. We will add one more link $M_\delta$ to the chain. If $\delta$ is a limit ordinal we set $M_\delta = \colim_{i < \delta} M_i$. If $\delta$ is a successor ordinal we set $M_\delta = N$.

We will by induction construct a chain $(N_i)_{i \leq \delta}$ with $N_0$ extending $N$, together with extensions $\{m_i': M_i \to N_i\}_{i \leq \delta}$ and an arrow $(a''; N_0)$ such that $m_0' = m_0$ and $\Lgtp(a'' / c; N_0) = \Lgtp(a / c; N_0)$ while at stage $i$ we have:
\begin{enumerate}[label=(\roman*)]
\item the extensions $\{m_j': M_j \to N_j\}_{j \leq i}$ are natural in the sense that
\[
\begin{tikzcd}[sep=small]
N_j \arrow[r]                   & N_i                    \\
M_j \arrow[r] \arrow[u, "m_j'"] & M_i \arrow[u, "m_i'"']
\end{tikzcd}
\]
commutes for all $j \leq i$;
\item if $i$ is a successor, say $i = j + 1$, then $\Lgtp(a'', b_j' / c; N_i) = \Lgtp(a, b / c; N_i)$, where $b_j'$ is the composition $B \xrightarrow{b_j} M_i \xrightarrow{m_i'} N_i$;
\item $a'' \ind_c^{N_i} m_i'$.
\end{enumerate}
\underline{Base case.} By \textsc{Existence} we have $a \ind_c^N c$, so we can apply strong extension (\thref{cor:strong-extension-dual}) to find $N \to N_0$ and $(a''; N_0)$ with $a'' \ind_c^{N_0} M_0$ and $\Lgtp(a'' / c; N_0) = \Lgtp(a / c; N_0)$.

\vspace{\baselineskip}\noindent
\underline{\emph{Successor step.}} Suppose we have constructed $N_i$ and $m_i'$. As $M_i$ is an amalgamation base we have $\gtp(m_i; N_i) = \gtp(m_i'; N_i)$. By (i) we have that $m_0'$ factors through $m_i'$ in the same way that $m_0$ factors through $m_i$, so $\gtp(m_i', m_0'; N_i) = \gtp(m_i, m_0; N_i)$. Since $m_0' = m_0$ have $\gtp(m_i', m_0; N_i) = \gtp(m_i, m_0; N_i)$, so $\Lgtp(m_i' / c; N_i) = \Lgtp(m_i / c; N_i)$. We thus find $(a^*, m_{i+1}^*; N^*)$ for some $N_i \to N^*$ such that $\Lgtp(m_{i+1}^*, m_i' / c; N^*) = \Lgtp(m_{i+1}, m_i / c; N^*)$ and $\Lgtp(a^*, b_i^* / c; N^*) = \Lgtp(a, b / c; N^*)$, where $b_i^*$ is given by $B \xrightarrow{b_i} M_{i+1} \xrightarrow{m_{i+1}^*} N^*$. For this last construction we used that $\Lgtp(b_i / c; N) = \Lgtp(b / c; N)$ and that $b_i$ factors through $m_{i+1}$. Then $a'' \ind_c^{N^*} m_i'$, $a^* \ind_c^{N^*} b_i^*$ and $b_i^* \ind_c^{N^*} m_i'$. So by \textsc{Independence Theorem} we find $N^* \to N_{i+1}$ and $(a^{**}; N_{i+1})$ with $\Lgtp(a^{**}, b_i^* / c; N_{i+1}) = \Lgtp(a^*, b_i^* / c; N_{i+1})$, $\Lgtp(a^{**}, m_i' / c; N_{i+1}) = \Lgtp(a'', m_i' / c; N_{i+1})$ and $a^{**} \ind_c^{N_{i+1}} N^*$. By \textsc{Monotonicity} we get $a^{**} \ind_c^{N_{i+1}} m_{i+1}^*$. Using $\Lgtp(a^{**}, m_i' / c; N_{i+1}) = \Lgtp(a'', m_i' / c; N_{i+1})$ we find $m_{i+1}': M_{i+1} \to N_{i+1}$ (after replacing $N_{i+1}$ by an extension) such that $\Lgtp(a^{**}, m_{i+1}^*, m_i' / c; N_{i+1}) = \Lgtp(a'', m_{i+1}', m_i' / c; N_{i+1})$. We verify the induction hypothesis:
\begin{enumerate}[label=(\roman*)]
\item we have by construction that $\gtp(m_{i+1}', m_i'; N_{i+1}) = \gtp(m_{i+1}^*, m_i'; N_{i+1}) = \gtp(m_{i+1}, m_i: N_{i+1})$, so $m_i'$ factors through $m_{i+1}'$ in the same way that $m_i$ factors through $m_{i+1}$ by \thref{fact:galois-type-factorisation}, and naturality follows;
\item using $\Lgtp(a^{**}, m_{i+1}^* / c; N_{i+1}) = \Lgtp(a'', m_{i+1}' / c; N_{i+1})$ and that $b_i^*$ and $b_i'$ are $B \xrightarrow{b_i} M_{i+1} \xrightarrow{m_{i+1}^*} N_{i+1}$ and $B \xrightarrow{b_i} M_{i+1} \xrightarrow{m_{i+1}'} N_{i+1}$ respectively by definition, we have $\Lgtp(a'', b_i' / c; N_{i+1}) = \Lgtp(a^{**}, b_i^*/ c; N_{i+1}) = \Lgtp(a^*, b_i^* / c; N_{i+1}) = \Lgtp(a, b / c; N_{i+1})$;
\item by $a^{**} \ind_c^{N_{i+1}} m_{i+1}^*$ and \textsc{Invariance}.
\end{enumerate}

\vspace{\baselineskip}\noindent
\underline{\emph{Limit step.}} For limit $\ell$ let $N_\ell = \colim_{i < \ell} N_i$. By (i) from the induction hypothesis the arrows $m_i'$ composed with the coprojections $N_i \to N_\ell$ form a cocone on $(M_i)_{i < \ell}$. By continuity $M_\ell = \colim_{i < \ell} M_i$, so there is a universal arrow $m_\ell': M_\ell \to N_\ell$. This directly establishes (i). Property (ii) is vacuous. Property (iii) follows from the induction hypothesis and \textsc{Union}.

\vspace{\baselineskip}\noindent Having finished the inductive construction, we have two arrows $M_\delta \to N_\delta$, namely $m_\delta: M_\delta \to N \to N_\delta$ and the $m_\delta'$ we just constructed. By (i) from the induction hypothesis we have $\gtp(m_\delta, m_0; N_\delta) = \gtp(m_\delta', m_0; N_\delta)$. So we find an extension $N_\delta \to N'$ and some $(a'; N')$ such that $\gtp(a', m_\delta, m_0; N') = \gtp(a'', m_\delta', m_0; N_\delta)$. Using that $c$ factors through $m_0$ and (ii) from the induction hypothesis, we find that for any $i < \delta$ we have $\Lgtp(a', b_i / c; N') = \Lgtp(a'', b_i' / c; N') = \Lgtp(a, b / c; N')$. By (iii) from the induction hypothesis we also have $a' \ind_c^{N'} M_\delta$. So if $\delta$ was a successor ordinal we had $M_\delta = N$ and we are done. Otherwise we can just apply \textsc{Extension} and relabel things to get $a' \ind_c^{N'} N$.

The final claim follows because $a'$ is a realisation of $\gtp(a, b, c; N)$ for $(b_i)_{i < \delta}$.
\end{proof}
\begin{remark}
\thlabel{rem:independence-theorem-lemma-no-lgtp}
In the context of \thref{lem:independence-theorem-lemma} if $C$ is a model then there is no need to concern ourselves with Lascar strong Galois types. That is, the proof as written then goes through if we replace ``Lascar strong Galois type'' by just ``Galois type'' everywhere. We also only apply \textsc{Independence Theorem} with $C$ in the base. So if $C$ is a model then it would be enough to just have \textsc{Independence Theorem} over models. Or equivalently, to have \textsc{3-amalgamation}, see \thref{thm:independence-theorem-vs-3-amalgamation}.
\end{remark}
The following is a slightly improved version of \cite[Theorem 1.1]{kamsma_kim-pillay_2020}. The improvement is in the fact that we can restrict our independence relation to a base class and the fact that we also get $\ind = \ind^{\isif}$.
\begin{repeated-theorem}[\thref{thm:canonicity-of-simple-independence}]
Let $(\C, \M)$ be an AECat with the amalgamation property, and suppose that $\ind$ is a simple independence relation. Then $\ind = \ind^{\isid} = \ind^{\isif}$ over $\base(\ind)$.
\end{repeated-theorem}
\begin{proof}
The implication $\ind^{\isid} \implies \ind$ is already given by \thref{thm:isi-dividing-implies-abstract-independence}. For the converse we will assume that $A \ind_C^M B$ and we will prove that $A \ind_C^{\isid, M} B$. Pick some representatives $a, b, c$ of $A, B, C$. Let $\mu$ be such that $(\C, \M)$ is a $\mu$-AECat and let $\lambda > \Upsilon(B) + \mu$. Let $(b_i)_{i < \lambda}$ be an isi-sequence over $c$ in some $M \to N$, with chain of initial segments $(M_i)_{i < \lambda}$ and $\gtp(b_i, c; N) = \gtp(b, c; M)$ for all $i < \lambda$. Let $\Upsilon(B) + \mu \leq \kappa < \lambda$. By \thref{lem:independence-witnesses-presentability} we may assume that $M_i$ is $\kappa$-presentable for all $i < \kappa$. We can thus apply chain local character, \thref{lem:chain-local-character}, to find $i_0 < \kappa$ such that $b_\kappa \ind_{M_{i_0}}^N M_\kappa$. We will aim to show that $\gtp(a, b, c; M)$ is consistent for $(b_i)_{i_0 \leq i < \kappa}$. We use $\gtp(b_{i_0}, c; N) = \gtp(b, c; M)$ to find a common extension $M \to N' \leftarrow N$ where $b = b_{i_0}$ as arrows into $N'$. By applying \textsc{Extension} to the assumption $a \ind_c^M b$ we then find $(a'; N')$ (possibly after replacing $N'$ by an extension) such that $a' \ind_c^{N'} N$ and $\gtp(a', b, c; N') = \gtp(a, b, c; M)$. Then by \textsc{Base-Monotonicity} and \textsc{Monotonicity} we find $a' \ind_{M_{i_0}}^{N'} b$. For any $i_0 \leq i < \kappa$ we have $\gtp(b_i, m_i, m_{i_0}; N') = \gtp(b_\kappa, m_i, m_{i_0}; N')$ because $(b_i)_{i < \lambda}$ is an isi-sequence. So by \textsc{Monotonicity} and \textsc{Invariance} and the earlier fact that $b_\kappa \ind_{M_{i_0}}^N M_\kappa$, we find
\[
b_i \ind_{M_{i_0}}^{N'} M_i
\]
for all $i_0 \leq i < \kappa$. So $(b_i)_{i_0 \leq i < \kappa}$ is a $\ind_{M_{i_0}}$-independent sequence. We can thus apply the generalised independence theorem, \thref{lem:independence-theorem-lemma}, to conclude that $\gtp(a', b, c; N') = \gtp(a, b, c; M)$ is indeed consistent for $(b_i)_{i_0 \leq i < \kappa}$. As $\kappa$ was arbitrarily large below $\lambda$, $\lambda$ itself was arbitarily large and $(b_i)_{i_0 \leq i < \kappa}$ is a subsequence of an arbitrary isi-sequence of length $\lambda$ we conclude that indeed $A \ind_C^{\isid, M} B$.

Finally, the claim $\ind^{\isid} = \ind^{\isif}$ follows from \thref{prop:isi-dividing-vs-isi-forking} because $\ind^{\isid} = \ind$ has \textsc{Extension} and \textsc{Right-Monotonicity}.
\end{proof}
\begin{remark}
\thlabel{rem:3-amal-enough-for-simple-canonicity-part2}
For the canonicity theorem for simple independence relations, \thref{thm:canonicity-of-simple-independence}, we only need \textsc{3-amalgamation}. Or equivalently, by \thref{thm:independence-theorem-vs-3-amalgamation}, \textsc{Independence Theorem} over models. Even if $\base(\ind)$ is more than just $\M$, e.g.\ $\base(\ind) = \C$. In the proof of \thref{thm:canonicity-of-simple-independence} we only applied the \textsc{Independence Theorem} indirectly through \thref{lem:independence-theorem-lemma}. The base, i.e.\ $C$ in that lemma, is by construction always a model. So by \thref{rem:independence-theorem-lemma-no-lgtp} it would be enough to only assume \textsc{3-amalgamation} instead of \textsc{Independence Theorem}.
\end{remark}
\begin{repeated-theorem}[\thref{thm:canonicity-of-nsop1-like-independence}]
Let $(\C, \M)$ be an AECat with the amalgamation property and let $\B$ be some base class. Suppose that $(\C, \M)$ satisfies the $\B$-existence axiom and suppose that there is an NSOP$_1$-like independence relation $\ind$ over $\B$. Then $\ind = \ind^{\lK}$ over $\B$.
\end{repeated-theorem}
\begin{proof}
Suppose that $A \ind_C^M B$ with $C \in \B$ and pick some representatives $a, b, c$ of $A, B, C$. There is a bound $\mu$ on the cardinality of the set of Lascar strong Galois types compatible with $(b, c; M)$, see \thref{prop:lascar-strong-galois-types-bounded}. Let $\lambda > \mu$ and let $(b_i)_{i < \lambda}$ be a $\ind_c^{\isif}$-independent sequence in some $M \to N$ with $\gtp(b_i, c; N) = \gtp(b, c; M)$ for all $i < \lambda$. Then $(b_i)_{i < \lambda}$ is also $\ind_c$-independent, by \thref{thm:isi-dividing-implies-abstract-independence} and \thref{prop:isi-dividing-vs-isi-forking}. We have to show that for every $\kappa < \lambda$ there is $I \subseteq \lambda$ with $|I| = \kappa$ such that $\gtp(a, b, c; M)$ is consistent for $(b_i)_{i \in I}$. So let $\kappa < \lambda$. Then by the choice of $\mu$ and $\lambda$ there must be some $I \subseteq \lambda$ with $|I| = \kappa$ such that $\Lgtp(b_i / c; N) = \Lgtp(b_j / c; N)$ for all $i,j \in I$. Let $i_0$ be the least element of $I$. Let $(a'; N')$ for some extension $N \to N'$ be such that $\gtp(a', b_{i_0}, c; N') = \gtp(a, b, c; M)$. Then we can apply the generalised independence theorem, \thref{lem:independence-theorem-lemma}, to see that $\gtp(a', b_{i_0}, c; N') = \gtp(a, b, c; M)$ is consistent for $(b_i)_{i \in I}$. We conclude that indeed $A \ind_C^{\lK, M} B$.

For the other direction, suppose that $A \ind_C^{\lK, M} B$ with $C \in \B$. Let $\kappa \geq \Upsilon(A)$ be such that $(\C, \M)$ is a $\kappa$-AECat and $A$ and $C$ are $\kappa$-presentable. Let $(b_i)_{i < \lambda}$ be a long enough $\ind_c^{\isif}$-independent sequence in some extension $M \to N$, with $\lambda > \kappa$, witnesses of independence $(M_i)_{i < \lambda}$ and $\gtp(b_i, c; N) = \gtp(b, c; M)$ for all $i < \lambda$. Such a sequence exists by \thref{cor:existence-axiom-gives-independent-sequences}, because we assumed the $\B$-existence axiom. By \thref{thm:isi-dividing-implies-abstract-independence} and \thref{prop:isi-dividing-vs-isi-forking} this is also a $\ind_c$-independent sequence. By definition of long Kim-dividing there is $I \subseteq \lambda$ with $|I| = \kappa$ such that $\gtp(a, b, c; N)$ is consistent for $(b_i)_{i \in I}$. Let $a'$ be a realisation for this (we may assume $a'$ is an arrow into $N$). By possibly deleting an end segment from $I$ we may assume that $I$ has the order type of $\kappa$. Using \thref{lem:independence-witnesses-presentability} we may assume that each object in the chain $(M_i)_{i \in I}$ is $\kappa$-presentable, where \textsc{Monotonicity} guarantees that these are still witnesses of independence. Then by chain local character, \thref{lem:chain-local-character}, we find $i_0 \in I$ such that $a' \ind_{M_{i_0}}^N M_I$ where $M_I = \colim_{i \in I} M_i$. So by \textsc{Monotonicity} and \textsc{Symmetry} we have
\[
b_{i_0} \ind_{M_{i_0}}^N a'.
\]
Furthermore, we have
\[
b_{i_0} \ind_C^N M_{i_0}.
\]
So by \textsc{Transitivity} we have $b_{i_0} \ind_C^N a'$.  The result then follows by \textsc{Symmetry} and the fact that $\gtp(a', b_{i_0}, c; N) = \gtp(a, b, c; M)$.
\end{proof}
By definition any stable independence relation is also simple, and any simple independence relation is also NSOP$_1$-like. The canonicity theorems then tell us that these are indeed unique in a given AECat with AP and with what notion of dividing they coincide. We make this precise in the following theorem.
\begin{repeated-theorem}[\thref{thm:independence-hierarchy}]
Let $(\C, \M)$ be an AECat with the amalgamation property and suppose that $\ind$ is a stable or a simple independence relation in $(\C, \M)$. Suppose furthermore that $\ind^*$ is an NSOP$_1$-like independence relation in $(\C, \M)$ with $\base(\ind) = \base(\ind^*)$. Then
\[
\ind = \ind^*  = \ind^{\isid} = \ind^{\isif} = \ind^{\lK}.
\]
\end{repeated-theorem}
\begin{proof}
This follows directly from \thref{thm:canonicity-of-simple-independence} and \thref{thm:canonicity-of-nsop1-like-independence}. To apply the latter we need the $\base(\ind)$-existence axiom. This is automatic, as $\ind = \ind^{\isif}$ over $\base(\ind)$ by \thref{thm:canonicity-of-simple-independence} and we have \textsc{Existence} by assumption.
\end{proof}
\begin{remark}
\thlabel{rem:recover-part-of-stability-hierarchy}
We can classify AECats based on the existence of certain independence relations, just as we can classify theories in first-order logic in that way. For example, suppose that we have an AECat $(\C, \M)$ with AP and an NSOP$_1$-like independence relation $\ind$ where \textsc{Base-Monotonicity} fails. Then we can never find a simple independence relation in $(\C, \M)$ (with the same base class). Because if we would have such a simple independence relation $\ind'$ then by \thref{thm:independence-hierarchy} we would have $\ind = \ind'$, but that is impossible because a simple independence relation must satisfy \textsc{Base-Monotonicity}. So we can classify $(\C, \M)$ as NSOP$_1$, but non-simple.
\end{remark}
We close out this section by discussing how this work extends and brings together previously known results in the settings of first-order, positive and continuous logic. We also describe precisely how to apply the canonicity theorems in these more concrete settings.

We first recall some useful terminology for positive logic from \cite{ben-yaacov_positive_2003, ben-yaacov_thickness_2003}.
\begin{definition}
\thlabel{def:thick-semi-hausdorff}
Let $T$ be a positive theory. We call $T$:
\begin{itemize}
\item \emph{semi-Hausdorff} if equality of types is type-definable;
\item \emph{thick} if being an indiscernible sequence is type-definable.
\end{itemize}
\end{definition}
It quickly follows that any first-order theory is semi-Hausdorff as a positive theory and that any semi-Hausdorff theory is thick. So whenever we mention semi-Hausdorff or thick theories in the examples below this automatically includes the first-order setting.

In a semi-Hausdorff theory $T$ we have that having the same type over an e.c.\ model implies having the same Lascar strong type (see \cite[Proposition 3.13]{ben-yaacov_thickness_2003}), just as we have for first-order theories. If $T$ is thick this is no longer generally true, see \cite[Subsection 10.1]{dobrowolski_kim-independence_2022}. In \cite[Lemma 2.20]{dobrowolski_kim-independence_2022} this is solved by considering $\lambda_T$-saturated models, where $\lambda_T = \beth_{(2^{|T|})^+ }$. The problem for AECats is then that the full subcategory of $\lambda_T$-saturated models in $\Mod(T)$ is not closed under directed colimits. We solve this with the following notion.
\begin{definition}
\thlabel{def:finitely-lambda-saturated}
We call an e.c.\ model $M$ of some positive theory $T$ \emph{finitely $\lambda$-saturated} if for every finite tuple $a \in M$ there is a $\lambda$-saturated e.c.\ model $M_0 \subseteq M$ with $a \in M_0$.
\end{definition}
Clearly any $\lambda$-saturated model is also finitely $\lambda$-saturated. The point is that the full subcategory of finitely $\lambda$-saturated models in $\Mod(T)$ is then closed under directed colimits for any $\lambda$. At the same time this notion is strong enough to give us the following fact.
\begin{fact}[{\cite[Proposition 2.39]{kamsma_independence_2021}}]
\thlabel{fact:finitely-lambda-saturated-gives-lascar-type}
In a thick positive theory $T$ having the same Lascar strong type over some parameter set $C$ is the transitive closure of having the same type over finitely $\lambda_T$-saturated models containing $C$.
\end{fact}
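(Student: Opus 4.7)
The plan is to prove the two directions separately, leveraging the already-known analogue of the statement for fully $\lambda_T$-saturated models, namely \cite[Lemma 2.20]{dobrowolski_kim-independence_2021}, which says that in a thick positive theory Lascar strong type equality over $C$ is the transitive closure of having the same type over some $\lambda_T$-saturated model containing $C$.

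For the direction ``Lascar strong type $\Rightarrow$ transitive closure through finitely $\lambda_T$-saturated models'', there is nothing to do: every $\lambda_T$-saturated model is trivially finitely $\lambda_T$-saturated (any finite tuple is contained in the model itself), so a witnessing chain provided by \cite[Lemma 2.20]{dobrowolski_kim-independence_2021} already has the required form.

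For the converse direction, assume $\tp(a/M) = \tp(a'/M)$ for some finitely $\lambda_T$-saturated $M$ with $C \subseteq M$, and aim to show $a$ and $a'$ have the same Lascar strong type over $C$. First I would reduce to finite subtuples $a_0 \subseteq a$, $a_0' \subseteq a'$ and a finite $c \subseteq C$. For such a finite $c$, the defining property of finitely $\lambda_T$-saturatedness supplies a $\lambda_T$-saturated submodel $M_0 \subseteq M$ with $c \subseteq M_0$. The restriction of $\tp(a/M) = \tp(a'/M)$ to $M_0$ gives $\tp(a_0/M_0) = \tp(a_0'/M_0)$, so \cite[Lemma 2.20]{dobrowolski_kim-independence_2021} yields that $a_0$ and $a_0'$ have the same Lascar strong type over $M_0$, hence over the finite tuple $c$.

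The main obstacle is then to patch these finite-tuple Lascar strong type equalities (over varying finite $c \subseteq C$) into a single Lascar strong type equality of the full tuples over all of $C$. One expects this to follow from the finite character of Lascar strong type equivalence in thick positive theories (using e.g.\ that the finest bounded $\mathrm{Aut}(\mathfrak{M}/C)$-invariant equivalence relation on a product of sorts is determined coordinate-by-coordinate and finite-tuple-by-finite-tuple), or alternatively by assembling all the finite-level chains into one uniform chain of finitely $\lambda_T$-saturated models containing $C$ — this is exactly what finitely $\lambda_T$-saturation buys us, since fully $\lambda_T$-saturated models may not even exist containing an arbitrarily large $C$. The weakening from $\lambda_T$-saturated to finitely $\lambda_T$-saturated is thus both necessary (to accommodate arbitrary parameter sets $C$ and keep the class of models closed under directed colimits) and sufficient (because each witnessing step only requires $\lambda_T$-saturation locally around the finite parameters being handled).
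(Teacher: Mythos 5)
This statement is a \emph{Fact}: the paper cites it from \cite[Proposition 2.39]{kamsma_independence_2021} and gives no proof of its own, so there is no in-paper argument to compare against. Evaluating your attempt on its own terms, the easy direction (Lascar strong type implies a chain through finitely $\lambda_T$-saturated models) is fine, since \cite[Lemma 2.20]{dobrowolski_kim-independence_2021} produces a chain through $\lambda_T$-saturated models and every $\lambda_T$-saturated model is finitely $\lambda_T$-saturated. One small side remark: $\lambda_T$-saturated e.c.\ models containing an arbitrary parameter set $C$ do exist, so the weakening to finitely $\lambda_T$-saturated is not needed ``to accommodate arbitrary $C$''; the motivation, as the surrounding discussion in the paper explains, is closure under directed colimits so that the class of such models can serve as the $\M$ of an AECat.

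The converse direction contains a genuine gap, which you acknowledge but do not close. You reduce to a single step $\tp(a/M)=\tp(a'/M)$ with $M$ finitely $\lambda_T$-saturated, restrict to a finite subtuple $a_0\subseteq a$, $a_0'\subseteq a'$ and a finite $c\subseteq C$, find a $\lambda_T$-saturated $M_0\subseteq M$ containing $c$, and apply Lemma 2.20 to get $\Lgtp(a_0/c)=\Lgtp(a_0'/c)$. The ``patching'' from these local equalities to $\Lgtp(a/C)=\Lgtp(a'/C)$ is the entire mathematical content, and the suggestions you offer do not settle it. The characterisation of Lascar strong type as the finest bounded invariant equivalence relation is (in positive logic, for thick but not semi-Hausdorff theories) precisely part of what is at stake here, so invoking it is close to circular. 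More importantly, finite Lascar distance over each finite $c$ does not by itself imply finite Lascar distance over $C$: for that you need a \emph{uniform} bound. What the intended argument surely uses is that Lemma 2.20 gives a uniform Lascar distance bound (distance at most $2$) over each $M_0$, hence over each finite $c$, and that in a thick theory Lascar distance $\leq 2$ is a type-definable condition on pairs (since being a $C$-indiscernible sequence is type-definable), so positive compactness glues the finite-level bounds into $d_C(a,a')\leq 2$. Without spelling out the uniform bound and the type-definability/compactness step, the proof is incomplete; as written it could equally well apply in a situation where the local distances grow without bound, and the conclusion would then be false.
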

\begin{example}
\thlabel{ex:simple-canonicity-positive-logic}
Let $T$ be a thick theory. Let $\C$ be either $\SubMod(T)$ or $\Mod(T)$. Following \thref{fact:finitely-lambda-saturated-gives-lascar-type} we take $\M$ to be the category of finitely $\lambda_T$-saturated models, so that Lascar strong types and Lascar strong Galois types coincide. If $T$ is semi-Hausdorff we can instead just take $\M = \Mod(T)$.

If $T$ is stable or simple then there is respectively a stable or simple independence relation $\ind$ in $(\C, \M)$ with $\base(\ind) = \C$. This follows from a combination of \cite{ben-yaacov_simplicity_2003, ben-yaacov_thickness_2003}. So \thref{thm:canonicity-of-simple-independence} applies.

The \textsc{Stationarity} property in a stable theory follows from \cite[Theorem 2.8]{ben-yaacov_simplicity_2003}. In their statement the base model $M$ is assumed to be $|T|^+$-saturated, which they need for only two reasons. The first reason is that types over $M$ should coincide with Lascar strong types over $M$, but by our choice of $\M$ and the thickness assumption this happens for all $M \in \M$. The second reason is that types over $M$ should be what they call extendible, but in a simple thick theory every type is extendible, see \cite[Theorem 1.15]{ben-yaacov_thickness_2003}.
\end{example}
\begin{example}
\thlabel{ex:nsop1-canonicity-positive-logic}
Let $(\C, \M)$ be an AECat based on some semi-Hausdorff or thick theory $T$ as in \thref{ex:simple-canonicity-positive-logic}. By \thref{ex:b-existence}(iii) we have the $\Mod(T)$-existence axiom. If $T$ is NSOP$_1$ then it has an NSOP$_1$-like independence relation $\ind$ with $\base(\ind) = \Mod(T)$. For first-order logic this was proved in \cite{kaplan_kim-independence_2020, kaplan_local_2019, kaplan_transitivity_2021}, which was extended to thick positive theories in \cite{dobrowolski_kim-independence_2022}. So \thref{thm:canonicity-of-nsop1-like-independence} applies.

Here we had to restrict the base class to e.c.\ models, simply because Kim-independence in positive logic has only been developed over e.c.\ models. For theories in first-order logic Kim-independence has been extended to arbitrary base sets, see \cite{chernikov_transitivity_2020, dobrowolski_independence_2022}. To make this work we need to assume the existence axiom, see also \thref{ex:b-existence}(iv). So let $T$ be an NSOP$_1$ theory in first-order logic and set $(\C, \M) = (\SubMod(T), \Mod(T))$, which we assume to satisfy the $\C$-existence axiom. Then the aforementioned sources show that there is an NSOP$_1$-like independence relation $\ind$ with $\base(\ind) = \C$ and so \thref{thm:canonicity-of-nsop1-like-independence} applies.

Finally we note that there is a Kim-Pillay style theorem in \cite[Theorem 5.1]{chernikov_transitivity_2020} for Kim-independence over arbitrary sets. They still rely on a syntactical property ``strong finite character'', which could be replaced by just ``finite character''. \thref{thm:canonicity-of-nsop1-like-independence} gives us just the canonicity part. To conclude that a theory with such an independence relation is NSOP$_1$, without using strong finite character, we can restrict ourselves to work over models and use the proof from \cite[Theorem 9.1]{dobrowolski_kim-independence_2022}.
\end{example}
\begin{example}
\thlabel{ex:simple-stable-canonicity-continuous-logic}
Let $T$ be a continuous theory, in the sense of \cite{ben-yaacov_model_2008}. Let $\C$ be either $\SubMetMod(T)$ or $\MetMod(T)$, and let $\M$ be $\MetMod(T)$. If $T$ is stable or simple then there is respectively a stable or simple independence relation $\ind$ in $(\C, \M)$ with $\base(\ind) = \C$. Every continuous theory is in particular a Hausdorff compact abstract theory, and so the machinery of \cite{ben-yaacov_simplicity_2003, ben-yaacov_thickness_2003} applies. This shows we can indeed find an appropriate independence relation in any simple or stable continuous theory. There is also \cite[Section 14]{ben-yaacov_model_2008} for a further discussion about stability specifically in continuous theories. So \thref{thm:canonicity-of-simple-independence} applies.

In \cite{ben-yaacov_model_2008} some examples of stable continuous theories and their corresponding independence relations are given, including Hilbert spaces and atomless probability spaces.
\end{example}
\begin{example}
\thlabel{ex:continuous-nsop1}
In this example we consider the continuous theory $T_N$ of Hilbert spaces with a distance function to a random subset, as studied in \cite{berenstein_hilbert_2018}. They prove that this theory has TP$_2$ and thus cannot be simple. They also define an independence relation $\ind^*$ over arbitrary sets that has all the properties of an NSOP$_1$-like independence relation. Except that they do not prove the full \textsc{Independence Theorem}, but enough for \textsc{3-amalgamation} (i.e.\ over models, see \thref{thm:independence-theorem-vs-3-amalgamation}). So setting $\C = \SubMetMod(T_N)$ and $\M = \MetMod(T_N)$, and taking $\base(\ind^*) = \MetMod(T)$, we have that $\ind^*$ is an NSOP$_1$-like independence relation in $(\C, \M)$. By \thref{ex:b-existence}(ii) we also have the $\MetMod(T)$-existence axiom. So \thref{thm:canonicity-of-nsop1-like-independence} applies.
\end{example}
\section{More on Lascar strong Galois types}
\label{sec:lgtp-vs-bounded-gtp}
In this section we will show that in the presence of a nice enough independence relation there are some equivalent definitions of Lascar strong Galois types, matching those we classically have for Lascar strong types. To place this all in context we recall the relevant equivalent definitions of Lascar strong types in first-order logic (see e.g.\ \cite[Proposition 3.1.5]{kim_simplicity_2014}).
\begin{definition}
\thlabel{def:lascar-strong-type}
Let $a$ and $b$ be tuples in some monster model of a first-order theory and let $C$ be some parameter set. We say that $a$ and $b$ have the same \emph{Lascar strong type over $C$} if the following equivalent conditions hold.
\begin{enumerate}[label=(\roman*)]
\item There are $a = a_0, a_1, \ldots, a_n = b$ and models $M_1, \ldots, M_n$, each containing $C$, such that $\tp(a_i / M_{i+1}) = \tp(a_{i+1} / M_{i+1})$ for all $0 \leq i < n$.
\item We have $a \sim b$ for any bounded $C$-invariant equivalence relation $\sim$.
\item There are $a = a_0, a_1, \ldots, a_n = b$ such that $a_i$ and $a_{i+1}$ are on a $C$-indiscernible sequence for all $0 \leq i < n$. In this case we say that $a$ and $b$ have \emph{Lascar distance} at most $n$ (over $C$).
\end{enumerate}
\end{definition}

It is well known that Lascar strong types heavily interact with independence relations in first-order logic. For example, independence relations can be used to show that having the same Lascar strong type is type-definable in any simple theory by showing that the Lascar distance within a Lascar strong type is at most $2$, see \cite[Proposition 5.1.11]{kim_simplicity_2014}. The same technique applies to any NSOP$_1$ theory in first-order logic that satisfies the existence axiom \cite[Corollary 5.9]{dobrowolski_independence_2022}. We essentially adapt this technique in this section, while at the same time using independence relations to build what we call ``strongly 2-indiscernible'' sequences (\thref{def:2-indiscernible-sequence}), which take the role of the usual indiscernible sequences.

Throughout this section we will work with single arrows $a, b, c$ and objects $A$ and $C$, where $\dom(a) = \dom(b) = A$ and $\dom(c) = C$. This leads to cleaner notation and when working with independence relations we can only work with single arrows anyway (i.e.\ the sides and base of an independence relation do not allow tuples of arrows in our definition). However, it is not too difficult to extend the main result of this section (\thref{thm:lascar-strong-type-equivalent-formulations}) to arbitrary tuples, see \thref{rem:lgtp-equivalences-tuples}.
\begin{definition}
\thlabel{def:bounded-invariant-equivalence-relation}
Let $(\C, \M)$ be an AECat with AP and fix some objects $A$ and $C$. Suppose that for each $M$ and each $c: C \to M$ we are given an equivalence relation $\equiv_{c,M}$ on $\Hom(A, M)$. Then we say that the family $\equiv$ is an \emph{equivalence relation over $C$}.

We call $\equiv$ a \emph{bounded equivalence relation} if there is $\lambda$ such that $\equiv_{c,M}$ has at most $\lambda$ many equivalence classes for any $M$ and $c: C \to M$.

We call $\equiv$ an \emph{invariant equivalence relation} if it is invariant under equality of Galois types over $C$. That is, if $\gtp(a, b, c; M) = \gtp(a', b', c'; M')$ then we have that $a \equiv_{c, M} b$ if and only if $a' \equiv_{c', M'} b'$.
\end{definition}
\begin{convention}
\thlabel{conv:invariant-equivalence-relation}
We will only deal with invariant equivalence relations. To further simplify the notation we will drop the $M$ from the notation. So we write $a \equiv_c b$ instead of $a \equiv_{c, M} b$. Because of invariance it does not matter if we consider $a$ and $b$ as arrows into $M$ or as arrows into an extension of $M$.
\end{convention}
\begin{example}
\thlabel{ex:bounded-invariant-equivalence-relations}
We give some familiar examples.
\begin{enumerate}[label=(\roman*)]
\item Taking just equality as equivalence relation is an equivalence relation over any $C$. This relation is invariant, but generally not bounded because $\Hom(A, M)$ may become arbitrarily large.
\item The trivial equivalence where everything is equivalent is a bounded invariant equivalence relation over any $C$.
\item Having the same Galois type is a bounded invariant equivalence relation over any $C$. That is, we define $\equiv$ as $a \equiv_c b$ if and only if $\gtp(a, c; M) = \gtp(b, c; M)$. Clearly $\equiv$ is invariant, and by \thref{fact:galois-type-set} it is bounded.
\item Having the same Lascar strong Galois type is a bounded invariant equivalence relation. Similar to the previous point we define $\equiv$ as $a \equiv_c b$ if and only if $\Lgtp(a / c; M) = \Lgtp(a' / c; M)$. This is invariant by \thref{prop:lascar-strong-type-invariant-under-galois-type} and bounded by \thref{prop:lascar-strong-galois-types-bounded}.
\item In any (positive or first-order) theory $T$, any hyperimaginary yields an invariant equivalence relation. That is, if $E(x, y)$ is a set of formulas that defines an equivalence relation modulo $T$ then we can define an invariant equivalence relation $\equiv^E$ over $\emptyset$ as follows: for tuples $a, b \in M$ we set $a \equiv^E b$ iff $M \models E(a, b)$.
\end{enumerate}
\end{example}
Usually bounded invariant equivalence relations are linked to Lascar strong types using indiscernible sequences. This requires some compactness, which we generally do not have. To solve this we will adapt the idea of strongly indiscernible sequences from \cite{hyttinen_simplicity_2006}.
\begin{definition}
\thlabel{def:2-indiscernible-sequence}
We call sequence $(a_i)_{i < \kappa}$ in some $M$ \emph{2-$c$-indiscernible} if for any $i_1 < i_2 < \kappa$ and any $j_1 < j_2 < \kappa$ we have $\gtp(a_{i_1}, c; M) = \gtp(a_{i_2}, c; M)$ and $\gtp(a_{i_1}, a_{i_2}, c; M) = \gtp(a_{j_1}, a_{j_2}, c; M)$. We call such a sequence \emph{strongly 2-$c$-indiscernible} if it can be extended to a 2-$c$-indiscernible sequence (possibly in some extension model) of arbitrary length.

Given an independence relation $\ind$ we define a \emph{2-$\ind_c$-Morley sequence} to be a 2-$c$-indiscernible sequence that is also $\ind_c$-independent. Such a sequence is called a \emph{strong 2-$\ind_c$-Morley sequence} if it can be extended to a 2-$\ind_c$-Morley sequence (possibly in some extension model) of arbitrary length.
\end{definition}
\begin{definition}
\thlabel{def:lascar-equivalence-relations}
For $(a, b, c; M)$ we write $a \sim^2_c b$ if $a$ and $b$ are on some strongly 2-$c$-indiscernible sequence. We write $\equiv^2_c$ for the transitive closure of $\sim^2_c$. Similarly, given an independence relation $\ind$, we write $a \sim^{\ind}_c b$ if $a$ and $b$ are on some strong 2-$\ind_c$-Morley sequence and $\equiv^{\ind}_c$ for its transitive closure. Finally, we write $a \equiv^B_c b$ if $a$ and $b$ are equivalent for every bounded invariant relation over $c$.
\end{definition}
One easily verifies that $\equiv^2$ and $\equiv^B$ are equivalence relations over any $C$. For $\equiv^{\ind}$ we may generally not have reflexivity, but we will have that in the situations we are interested in. In particular, \thref{lem:same-type-over-model-implies-strong-morley-equivalent} shows that $\equiv^{\ind}$ has reflexivity over models.
\begin{theorem}
\thlabel{thm:lascar-strong-type-equivalent-formulations}
Let $(\C, \M)$ be an AECat with AP, and suppose that $\ind$ is a basic independence relation that also satisfies \textsc{3-amalgamation}. Then the following are equivalent for any $(a, b, c; M)$:
\begin{enumerate}[label=(\roman*)]
\item $\Lgtp(a/c; M) = \Lgtp(b/c; M)$;
\item $a \equiv^B_c b$, so $a$ and $b$ are equivalent under every bounded invariant equivalence relation over $C$;
\item $a \equiv^2_c b$, so $a$ and $b$ can be connected by strongly 2-$c$-indiscernible sequences.
\end{enumerate}
\end{theorem}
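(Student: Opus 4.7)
My plan is to prove the theorem as a cycle (iii) $\Rightarrow$ (ii) $\Rightarrow$ (i) $\Rightarrow$ (iii). The first two implications are direct; the substance lies in (i) $\Rightarrow$ (iii), where the independence relation enters in an essential way.

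For (iii) $\Rightarrow$ (ii), I fix a bounded invariant equivalence relation $\equiv$ over $c$ with bound $\lambda$. It suffices, by transitivity of $\equiv$, to show that $a \sim^2_c a'$ implies $a \equiv_c a'$. Extending a strongly 2-$c$-indiscernible sequence containing $a$ and $a'$ to length $\lambda^+$, pigeonhole yields two members in the same $\equiv$-class; 2-indiscernibility transports their Galois 2-type to $\gtp(a, a', c; M)$, and invariance of $\equiv$ propagates the equivalence back to $a, a'$. For (ii) $\Rightarrow$ (i), I simply observe that $\Lgtp$-equivalence over $c$ is itself bounded (\thref{prop:lascar-strong-galois-types-bounded}) and invariant (\thref{prop:lascar-strong-type-invariant-under-galois-type}), and so is an instance of the relations quantified over in (ii); compare \thref{ex:bounded-invariant-equivalence-relations}(iv).

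For (i) $\Rightarrow$ (iii), since $\Lgtp$-equivalence is the transitive closure of $\sim_{\Lgtp}$ and $\equiv^2_c$ is already transitive, I reduce to handling a single $\sim_{\Lgtp}$ step. Unwinding gives an extension $M \to N$ and a model $m_0 : M_0 \to N$ through which $c$ factors, with $\gtp(a, m_0; N) = \gtp(b, m_0; N)$. Using \textsc{Existence} and \textsc{Extension}, I introduce a helper $a''$ in a further extension with $\gtp(a'', m_0) = \gtp(a, m_0)$ and $a'' \ind_{m_0} (a, b)$; then \textsc{Monotonicity} and \textsc{Symmetry} supply $a \ind_{m_0} a''$ and $a'' \ind_{m_0} b$, and since $m_0$ is a model all three arrows additionally share the same $\Lgtp$ over $m_0$. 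If I can show that every pair $(x, y)$ with $x \ind_{m_0} y$ and $\Lgtp(x/m_0) = \Lgtp(y/m_0)$ satisfies $x \sim^2_{m_0} y$, then $a \sim^2_{m_0} a'' \sim^2_{m_0} b$, so $a \equiv^2_{m_0} b$, and hence $a \equiv^2_c b$ by monotonicity of Galois types applied to any witnessing sequence (since $c$ factors through $m_0$).

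The main obstacle is this last claim, which I will reduce to producing, for each cardinal $\kappa$, a 2-$m_0$-indiscernible $\ind_{m_0}$-independent sequence of length $\kappa$ containing $(x, y)$ with target 2-type $\gtp(x, y, m_0)$. I plan to build it by transfinite induction, invoking the generalised independence theorem (\thref{lem:independence-theorem-lemma}) at each successor step; since the base $m_0$ is a model, \thref{rem:independence-theorem-lemma-no-lgtp} tells us that only \textsc{3-amalgamation} is required here, so the hypotheses are met. To adjoin $c_n$ to a sequence $(c_i)_{i<n}$ already built, I will apply \thref{lem:independence-theorem-lemma} to the pair $(y, x)$ --- appealing to \textsc{Symmetry} for $y \ind_{m_0} x$ --- with the parallel $\ind_{m_0}$-independent sequence $(c_i)_{i<n}$; the output is some $c_n$ independent from the ambient model and with $\Lgtp(c_n, c_i / m_0) = \Lgtp(y, x / m_0)$ for all $i<n$, and swapping the two coordinates in these Galois types yields the desired $\gtp(c_i, c_n, m_0) = \gtp(x, y, m_0)$ for increasing pairs. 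Limit stages are handled by taking colimits, under which both $\ind_{m_0}$-independence and 2-indiscernibility persist, because any pair of indices already appears at an earlier stage.
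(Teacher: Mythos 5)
Your proposal follows essentially the same route as the paper: the chain (iii) $\Rightarrow$ (ii) $\Rightarrow$ (i) is established exactly as in \thref{lem:easy-implications-equivalence-relations} (pigeonhole plus invariance for the first implication, and \thref{ex:bounded-invariant-equivalence-relations}(iv) for the second), and for (i) $\Rightarrow$ (iii) you reduce to a single $\sim_{\Lgtp}$ step over a model $m_0$, manufacture a helper element independent from the whole ambient model via \textsc{Existence} and \textsc{Extension}, and then elongate the resulting two-element sequences by repeated application of \thref{lem:independence-theorem-lemma} in the form permitted by \thref{rem:independence-theorem-lemma-no-lgtp} — precisely the content of the paper's \thref{lem:finding-strong-morley-sequences} and \thref{lem:same-type-over-model-implies-strong-morley-equivalent}, which you have unfolded inline rather than isolating as lemmas.

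One small caution, not a gap in the argument you actually run but in the auxiliary claim as you phrase it: ``every pair $(x,y)$ with $x \ind_{m_0} y$ and $\Lgtp(x/m_0) = \Lgtp(y/m_0)$ satisfies $x \sim^2_{m_0} y$'' is stated with only the pointwise independence $x \ind_{m_0} y$, but to start the transfinite construction you must exhibit the two-element sequence $(x, y)$ as $\ind_{m_0}$-independent in the sense of \thref{def:independent-sequence}, which requires a model $M_1$ containing $m_0$ and $x$ with $y \ind_{m_0} M_1$ — this does not follow from $x \ind_{m_0} y$ alone. In the case you actually invoke, $y$ is the helper $a''$ with $a'' \ind_{m_0} N$ for the whole ambient model $N$, and one can take $M_0 = \dom(m_0)$ and $M_1 = N$, so the witnesses exist; the paper's own ``In particular'' clause of \thref{lem:finding-strong-morley-sequences} is applied under exactly the same stronger hypothesis. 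It would be cleaner to state your claim with the hypothesis ``$(x,y)$ is a 2-$\ind_{m_0}$-Morley sequence'' (or ``$y \ind_{m_0} N$'') and then note that the constructed $a''$ satisfies it on both sides.
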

Comparing the above statement to \thref{def:lascar-strong-type} we see great similarity, but to prove the equivalence in \thref{def:lascar-strong-type} no independence relation was needed. In this very general setting we use the independence relation as a replacement for the uses compactness. Proving the equivalence of the above conditions without a nice independence relation seems a lot harder, if not impossible in this generality.
\begin{remark}
\thlabel{rem:lgtp-equivalences-only-3-amalgamation}
In \thref{thm:lascar-strong-type-equivalent-formulations} we only required $\ind$ to have \textsc{3-amalgamation}. So the assumptions of the theorem do not mention anything about Lascar strong Galois types. That means that, in the presence of such an independence relation, we can take any of the equivalent conditions in \thref{thm:lascar-strong-type-equivalent-formulations} as the definition for Lascar strong Galois types, without any circularity in the definitions.
\end{remark}
The remainder of this section is devoted to proving \thref{thm:lascar-strong-type-equivalent-formulations}.
\begin{lemma}
\thlabel{lem:easy-implications-equivalence-relations}
For any $(a, b, c; M)$ and any independence relation $\ind$ we always have
\[
a \equiv^{\ind}_c b \enspace \implies \enspace
a \equiv^2_c b \enspace \implies \enspace
a \equiv^B_c b \enspace \implies \enspace
\Lgtp(a / c; M) = \Lgtp(b / c; M).
\]
\end{lemma}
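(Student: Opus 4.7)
The plan is to prove the three implications in turn, from left to right. The first is essentially definitional, the third amounts to applying the hypothesis to one particular equivalence relation, and the middle one is a standard pigeonhole dressed up with the idea of strong 2-indiscernibility (which plays the role that compactness would play in the first-order setting).

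For $a \equiv^{\ind}_c b \Rightarrow a \equiv^2_c b$, I would just unfold definitions: by \thref{def:2-indiscernible-sequence} a strong 2-$\ind_c$-Morley sequence is in particular strongly 2-$c$-indiscernible, since the $\ind_c$-independence is extra structure on top of 2-$c$-indiscernibility and the extendability clause is stronger for Morley sequences than for plain 2-$c$-indiscernible ones. Hence $a \sim^{\ind}_c b$ implies $a \sim^2_c b$, and taking transitive closures gives the desired inclusion.

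For $a \equiv^2_c b \Rightarrow a \equiv^B_c b$ it suffices, since the right-hand side is already transitive in the bounded invariant equivalence relation $\equiv$, to prove the one-step version: if $a \sim^2_c b$ then $a \equiv_c b$ for every bounded invariant equivalence relation $\equiv$ over $c$. Fix such $\equiv$ with bound $\lambda$. By hypothesis $a$ and $b$ sit as $a_{i_0}$ and $a_{j_0}$ on some strongly 2-$c$-indiscernible sequence, so we may extend this to a 2-$c$-indiscernible sequence $(a_i)_{i<\lambda^+}$ in some extension of $M$. Pigeonhole on the $\equiv_c$-classes produces $i'<j'<\lambda^+$ with $a_{i'}\equiv_c a_{j'}$. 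By 2-$c$-indiscernibility, $\gtp(a_{i'},a_{j'},c) = \gtp(a_{i_0},a_{j_0},c)$, which up to symmetry of $\equiv$ matches $\gtp(a,b,c)$, and invariance of $\equiv$ under Galois types over $c$ then yields $a \equiv_c b$.

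Finally, for $a \equiv^B_c b \Rightarrow \Lgtp(a/c;M)=\Lgtp(b/c;M)$ I would invoke \thref{ex:bounded-invariant-equivalence-relations}(iv): sameness of Lascar strong Galois type over $c$ is itself a bounded invariant equivalence relation, by \thref{prop:lascar-strong-type-invariant-under-galois-type} and \thref{prop:lascar-strong-galois-types-bounded}. Applying the hypothesis $a \equiv^B_c b$ to this particular relation delivers the conclusion immediately. The only part requiring any real argument is the middle implication, and even there the only potential obstacle is verifying that the positions of $a$ and $b$ on the sequence can be meaningfully compared via 2-indiscernibility; this is handled because 2-$c$-indiscernibility precisely guarantees that all ordered pairs $(a_p,a_q)$ with $p<q$ realise the same Galois type over $c$, and $\equiv_c$ is symmetric.
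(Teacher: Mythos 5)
Your proposal is correct and takes essentially the same route as the paper: the first and third implications are handled identically (definitional inclusion of strong Morley into strongly 2-indiscernible, and Lascar strong Galois type as a bounded invariant equivalence relation via \thref{ex:bounded-invariant-equivalence-relations}(iv)), and the middle implication is the same pigeonhole on a $\lambda^+$-long extension of the strongly 2-$c$-indiscernible sequence followed by invariance. The paper streamlines the position-of-$a,b$ issue with a ``without loss of generality $a_0=a$, $a_1=b$'' whereas you invoke symmetry of $\equiv$ explicitly, but these are the same observation.
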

\begin{proof}
The first implication follows because any strong 2-$\ind_c$-Morley sequence is in particular strongly 2-$c$-indiscernible. The final implication follows because having the same Lascar strong Galois type is a bounded invariant equivalence relation, see \thref{ex:bounded-invariant-equivalence-relations}(iv). We prove the middle implication. So let $\equiv$ be a bounded invariant equivalence relation over $C$. It is enough to prove that $a \sim^2_c b$ implies $a \equiv_c b$. Let $\kappa$ be the bound of $\equiv$. Since $a \sim^2_c b$ we find a 2-$c$-indiscernible sequence $(a_i)_{i < \kappa^+}$ in some extension $N$ of $M$ with $a$ and $b$ on it. Without loss of generality we may assume $a_0 = a$ and $a_1 = b$. By boundedness we find $i < j < \kappa^+$ such that $a_i \equiv_c a_j$. By 2-$c$-indiscernibility we have $\gtp(a, b, c; N) = \gtp(a_i, a_j, c; N)$. So $a \equiv_c b$ follows from invariance.
\end{proof}
\begin{lemma}
\thlabel{lem:finding-strong-morley-sequences}
Suppose that $\ind$ is a basic independence relation that also satisfies \textsc{3-amalgamation}. Let $m$ be an arrow with a model as domain and let $\delta \geq 2$ be any ordinal (possibly finite). Then any 2-$\ind_m$-Morley sequence $(a_i)_{i < \delta}$ is a strong 2-$\ind_m$-Morley sequence. In particular $a \ind^N_m b$ and $\gtp(a, m; N) = \gtp(b, m; N)$ implies $a \sim^{\ind}_m b$.
\end{lemma}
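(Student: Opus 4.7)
I would prove by induction on a target ordinal $\kappa > \delta$ that any 2-$\ind_m$-Morley sequence $(a_i)_{i<\delta}$ extends (possibly inside an enlarging extension) to a 2-$\ind_m$-Morley sequence of length $\kappa$. The driving tool is the Generalised Independence Theorem (\thref{lem:independence-theorem-lemma}) applied with $c := m$; since $\dom(m)$ is a model, \thref{rem:independence-theorem-lemma-no-lgtp} lets me invoke that lemma using only the assumed \textsc{3-amalgamation}. The same observation upgrades equality of Galois types over $m$ to equality of Lascar strong Galois types over $m$, which supplies the Lascar-type hypothesis of \thref{lem:independence-theorem-lemma} for free from 2-indiscernibility.

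\textbf{Successor step.} Suppose I have built $(a_i)_{i<\lambda}$ with $\lambda \geq \delta \geq 2$ and witnesses $(M_i)_{i<\lambda}$ in an ambient $N$. Since $a_0$ factors through $M_1$, \textsc{Monotonicity} gives $a_1 \ind_m^N a_0$ after \textsc{Symmetry}; and 2-indiscernibility combined with the fact that $\dom(m)$ is a model gives $\Lgtp(a_i/m;N) = \Lgtp(a_0/m;N)$ for all $i<\lambda$. I apply \thref{lem:independence-theorem-lemma} with $a := a_1$, $b := a_0$ and the $\ind_m$-independent sequence $(a_i)_{i<\lambda}$, obtaining an extension $N \to N'$ and $a_\lambda$ with $a_\lambda \ind_m^{N'} N$ and $\Lgtp(a_\lambda, a_i / m; N') = \Lgtp(a_1, a_0 / m; N')$ for every $i<\lambda$. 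Swapping the first two entries of the tuple (a general feature of Galois types) then yields $\gtp(a_i, a_\lambda, m; N') = \gtp(a_0, a_1, m; N')$, which is exactly 2-indiscernibility of the extended sequence. Setting $M_\lambda := N$ extends the chain of witnesses, with $a_\lambda \ind_m^{N'} M_\lambda$ immediate from the independence conclusion.

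\textbf{Limit step and the particular case.} At a limit $\ell$ I take $M_\ell := \colim_{i<\ell} M_i$: 2-indiscernibility descends since every pair of indices was already dealt with at some earlier successor stage, and each witness condition $a_i \ind_m M_i$ for $i<\ell$ is inherited unchanged. Iterating through the transfinite recursion up to $\kappa$ completes the main claim. For the ``in particular'' statement, given $a \ind_m^N b$ and $\gtp(a, m; N) = \gtp(b, m; N)$, the length-$2$ sequence $(a, b)$ is trivially 2-$m$-indiscernible. A chain of witnesses is manufactured by taking $M_0 := \dom(m)$ (so $a \ind_m M_0$ by \textsc{Existence}) and applying \textsc{Extension} to $b \ind_m^N a$ (via \textsc{Symmetry}) to produce, in a suitable extension, a model $M_1$ through which $a$ factors and with $b \ind_m M_1$. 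The main claim then upgrades $(a, b)$ to a strong 2-$\ind_m$-Morley sequence, giving $a \sim^{\ind}_m b$. The main obstacle is the ordering in the application of \thref{lem:independence-theorem-lemma}: it must be invoked with $a := a_1$ and $b := a_0$ rather than the reverse, so that the final tuple-entry swap delivers the natural left-to-right ordering of the extended sequence.
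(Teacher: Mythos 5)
Your overall strategy is the same as the paper's: apply the generalised independence theorem (\thref{lem:independence-theorem-lemma}) transfinitely with the role assignment $a \mapsto a_1$, $b \mapsto a_0$, $c \mapsto m$, invoking \thref{rem:independence-theorem-lemma-no-lgtp} so that Lascar strong Galois types may be replaced by Galois types since $\dom(m)$ is a model. The paper's proof is three sentences stating exactly this and leaving the bookkeeping implicit; your elaboration of the bookkeeping (tuple-entry swap, chain of witnesses) is essentially sound, though at a limit index $\ell$ you should take $M_\ell := \colim_{i<\ell} M_i$ rather than the ambient model, as continuity of the chain requires this; the witness condition $a_\ell \ind_m M_\ell$ then follows by \textsc{Monotonicity} since $M_\ell \leq N$.

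There is one slip in the ``in particular'' part. \textsc{Extension} applied to $b \ind_m^N a$ with $a$ factoring through a model $M_1$ produces a \emph{new} arrow $b'$ with $\gtp(b', a, m) = \gtp(b, a, m)$ and $b' \ind_m M_1$; it does not give $b \ind_m M_1$ for the original $b$. So the 2-$\ind_m$-Morley sequence you manufacture is $(a, b')$, not $(a, b)$, and the main claim only yields $a \sim^{\ind}_m b'$. You need a further step to pass to $a \sim^{\ind}_m b$, for instance by checking that $\sim^{\ind}_m$ is invariant under equality of Galois types over $m$ (a short amalgamation argument using \textsc{Invariance} of $\ind$). This gap is fixable, but as written the claim ``$b \ind_m M_1$'' is not what \textsc{Extension} delivers.
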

\begin{proof}
By \thref{rem:independence-theorem-lemma-no-lgtp} we can apply \thref{lem:independence-theorem-lemma}, the generalised independence theorem, while avoiding referring to Lascar strong Galois types. We can thus inductively apply \thref{lem:independence-theorem-lemma} to elongate $(a_i)_{i < \delta}$ to any length we want. We do this by letting $a_0$, $a_1$ and $m$ play the roles of $b$, $a$ and $c$ respectively. The final claim follows because in that case $(a, b)$ is a 2-$\ind_m$-Morley sequence of length two.
\end{proof}
\begin{lemma}
\thlabel{lem:same-type-over-model-implies-strong-morley-equivalent}
Suppose that $\ind$ is a basic independence relation that also satisfies \textsc{3-amalgamation}. If $\gtp(a, m; N) = \gtp(b, m; N)$, where $\dom(m)$ is a model, then $a \equiv^{\ind}_m b$.
\end{lemma}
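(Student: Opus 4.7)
The plan is to find a single intermediate arrow $a'$ that is simultaneously $\ind_m$-independent from both $a$ and $b$ and has the same Galois type over $m$ as they do; then $a \sim^{\ind}_m a' \sim^{\ind}_m b$ by \thref{lem:finding-strong-morley-sequences}, so transitivity gives $a \equiv^{\ind}_m b$.

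Concretely, I would first use \textsc{Existence} to get $a \ind_m^N m$, and then apply \textsc{Extension} with the right side enlarged from $m$ to the identity arrow $\mathrm{id}_N\colon N \to N$. Note that $m$ trivially factors through $\mathrm{id}_N$, so the hypothesis of \textsc{Extension} is met. This yields an extension $N \to N'$ together with an arrow $a'\colon A \to N'$ such that $\gtp(a', m; N') = \gtp(a, m; N)$ and $a' \ind_m^{N'} N$ (where the right side is now all of $N$, as a subobject of $N'$). Since $a$ and $b$ are already arrows into $N$, they represent subobjects of $N$, hence by \textsc{Monotonicity} we obtain both $a' \ind_m^{N'} a$ and $a' \ind_m^{N'} b$. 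Combined with $\gtp(a', m; N') = \gtp(a, m; N) = \gtp(b, m; N')$ (the last equality using our hypothesis), \thref{lem:finding-strong-morley-sequences} applied twice gives $a' \sim^{\ind}_m a$ and $a' \sim^{\ind}_m b$, so $a \equiv^{\ind}_m b$.

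The only mild subtlety is noticing that \textsc{Extension} can be used with $b' = \mathrm{id}_N$ to ``swallow'' both $a$ and $b$ on the right at once, which is what avoids having to chain the construction separately through $a$ and through $b$. Everything else (the appeal to \thref{lem:finding-strong-morley-sequences}, which silently invokes \textsc{3-amalgamation} via the generalised independence theorem) is already packaged by the preceding lemma, so no direct manipulation of Lascar strong Galois types is required here.
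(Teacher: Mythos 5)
Your proof is correct and follows the paper's argument essentially verbatim: \textsc{Existence} plus \textsc{Extension} to produce $a'$ with $a' \ind_m^{N'} N$, then \textsc{Monotonicity} (with \textsc{Symmetry}, which is implicit in both) to shrink the right side to $a$ and to $b$, and finally two applications of \thref{lem:finding-strong-morley-sequences}. The only difference is presentational, in that you spell out the choice $b' = \mathrm{id}_N$ in the application of \textsc{Extension}, which the paper leaves implicit.
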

\begin{proof}
By \textsc{Existence} we have $a \ind_m^N m$, so we can apply \textsc{Extension} to find $N \to N'$ and some $(a'; N')$ such that $a' \ind_m^{N'} N$ while $\gtp(a', m; N') = \gtp(a, m; N')$. By \textsc{Monotonicity} we then have $a' \ind_m^{N'} a$ and $a' \ind_m^{N'} b$. So by \thref{lem:finding-strong-morley-sequences} we have $a \sim^{\ind}_m a' \sim^{\ind}_m b$, and we are done.
\end{proof}
\begin{proof}[Proof of \thref{thm:lascar-strong-type-equivalent-formulations}]
By \thref{lem:easy-implications-equivalence-relations} we only need to prove that $\Lgtp(a/c; M) = \Lgtp(b/c; M)$ implies $a \equiv^2_c b$. It is enough to prove that $(a/c; M) \sim_{\Lgtp} (b/c; M)$ implies $a \equiv^2_c b$. So let $M \to N$ be an extension and let $m_0: M_0 \to N$ be such that $c$ factors through $m_0$, $M_0$ is a model and $\gtp(a, m_0; N) = \gtp(b, m_0; N)$. Then by \thref{lem:same-type-over-model-implies-strong-morley-equivalent} we get $a \equiv^{\ind}_{m_0} b$. So by \thref{lem:easy-implications-equivalence-relations} we have $a \equiv^2_{m_0} b$. Any strongly 2-$m_0$-indiscernible sequence is also strongly 2-$c$-indiscernible, because $c$ factors through $m_0$. So we conclude that indeed $a \equiv^2_c b$.
\end{proof}
\begin{remark}
\thlabel{rem:lgtp-also-equivalent-to-2-morley-sequences}
The proof of \thref{thm:lascar-strong-type-equivalent-formulations} also shows that if $\dom(c)$ is a model then $\Lgtp(a/c; M) = \Lgtp(b/c; M)$ is further equivalent to $a \equiv^{\ind}_c b$.
\end{remark}
\begin{remark}
\thlabel{rem:lgtp-equivalences-tuples}
We have stated \thref{thm:lascar-strong-type-equivalent-formulations} for single arrows, rather than for tuples of arrows. We briefly sketch how we can get the result for tuples of arrows as well. That is, if we replace $a$, $b$ and $c$ by $(a_i)_{i \in I}$, $(b_i)_{i \in I}$ and $(c_j)_{j \in J}$ respectively.

First we extend the definitions of (strongly) 2-indiscernible, $\equiv^2$ and $\equiv^B$ to tuples of arrows in a straightforward way. Then following the same proof as in \thref{lem:easy-implications-equivalence-relations} we get:
\begin{align*}
&(a_i)_{i \in I} \equiv^2_{(c_j)_{j \in J}} (b_i)_{i \in I} &\implies \\
&(a_i)_{i \in I} \equiv^B_{(c_j)_{j \in J}} (b_i)_{i \in I} &\implies \\
&\Lgtp((a_i)_{i \in I} / (c_j)_{j \in J}; M) = \Lgtp((b_i)_{i \in I} / (c_j)_{j \in J}; M).
\end{align*}
So we are left to prove that $\Lgtp((a_i)_{i \in I} / (c_j)_{j \in J}; M) = \Lgtp((b_i)_{i \in I} / (c_j)_{j \in J}; M)$ implies $(a_i)_{i \in I} \equiv^2_{(c_j)_{j \in J}} (b_i)_{i \in I}$. It is enough to prove that $((a_i)_{i \in I} / (c_j)_{j \in J}; M) \sim_{\Lgtp} ((b_i)_{i \in I} / (c_j)_{j \in J}; M)$ implies $(a_i)_{i \in I} \equiv^2_{(c_j)_{j \in J}} (b_i)_{i \in I}$. So let $M \to N$ be an extension and let $m_0: M_0 \to N$ be such that all arrows in $(c_j)_{j \in J}$ factor through $m_0$, $M_0$ is a model and $\gtp((a_i)_{i \in I}, m_0; N) = \gtp((b_i)_{i \in I}, m_0; N)$. Pick some $d: D \to N$ such that every arrow in $(a_i)_{i \in I}$ factors through $d$. We then find an extension $N \to N'$ and $d': D \to N'$ such that every arrow in $(b_i)_{i \in I}$ factors through $d'$ and $\gtp(d, m_0; N') = \gtp(d', m_0; N')$. Now we can apply the original result \thref{thm:lascar-strong-type-equivalent-formulations} to obtain $d \equiv^2_{m_0} d'$ and hence $(a_i)_{i \in I} \equiv^2_{(c_j)_{j \in J}} (b_i)_{i \in I}$, as required.
\end{remark}
\begin{remark}
\thlabel{rem:lgtp-equivalences-given-independence-theorem}
We only assumed \textsc{3-amalgamation}. If we also assume \textsc{Independence Theorem} together with $\base(\ind) = \C$ we get a little bit more, namely that $\Lgtp(a/c; M) = \Lgtp(b/c; M)$ is further equivalent to $a \equiv^{\ind}_c b$. This happens for example in any simple thick positive theory where $\ind$ is the usual dividing independence (see \thref{ex:simple-canonicity-positive-logic}).

The proof of this is largely the same as the proof in this section. We sketch where some changes would need to be made. We adjust \thref{lem:finding-strong-morley-sequences} as follows: any 2-$\ind_c$-Morley sequence $(a_i)_{i < \delta}$ such that $\Lgtp(a_i / c; M) = \Lgtp(a_j / c; M)$ for all $i < j < \delta$ is a strong 2-$\ind_c$-Morley sequence. Here the extra assumption ``$\Lgtp(a_i / c; M) = \Lgtp(a_j / c; M)$'' is necessary to still apply \thref{lem:independence-theorem-lemma}, and so the proof goes through. Then \thref{lem:same-type-over-model-implies-strong-morley-equivalent} can be restated as $\Lgtp(a/c; M) = \Lgtp(b/c; M)$ implies $a \equiv^{\ind}_c b$. The only change in the proof is that we need to apply strong extension, \thref{cor:strong-extension-dual}. This then already concludes the proof.

This also tells us that in this case we will need at most two strong 2-$\ind_c$-Morley sequences to connect $a$ and $b$, whenever they have the same Lascar strong Galois type. That is, there is some $a'$ in an extension of $M$ such that $a \sim^{\ind}_c a' \sim^{\ind}_c b$. In particular this also means that we need at most two strongly 2-$c$-indiscernible sequences to connect $a$ and $b$, because strong 2-$\ind_c$-Morley sequences are in particular strongly 2-$c$-indiscernible.
\end{remark}

\bibliographystyle{alpha}
\bibliography{bibfile}


\end{document}